\newcommand{\nc}{\newcommand}
\newenvironment{rouge}
{\relax\color{red}}
{\hspace*{.3ex}\relax}
\newcommand{\ber}{\begin{rouge}{}\marginnote{\mbox{$\bullet$}}{}}
\newcommand{\er}{\end{rouge}}
\newcommand{\bera}{\begin{rouge}{}\marginnote{\fbox{\scshape\lowercase{A}}}{}}
\newcommand{\berm}{\begin{rouge}{}\marginnote{\fbox{\scshape\lowercase{M}}}{}}
\newcommand{\erm}{\end{rouge}}
\newenvironment{bleu}
{\relax\color{blue}}
{\hspace*{.3ex}\relax}
\newcommand{\beb}{\begin{bleu}}
\newcommand{\bebm}{\begin{bleu}{}\marginnote{\fbox{\scshape\lowercase{M}}}{}}
\newcommand{\beba}{\begin{bleu}{}\marginnote{\fbox{\scshape\lowercase{A}}}{}}
\newcommand{\eb}{\end{bleu}}
\newenvironment{blanc}
{\relax\color{YellowOrange}}
{\hspace*{.3ex}\relax}
\newcommand{\bw}{\begin{blanc}{}\marginnote{\mbox{$\bullet$}}{}}
\newcommand{\ew}{\end{blanc}}
\theoremstyle{plain}
\newtheorem{theorem}{Theorem}[section]
\newtheorem*{theorem*}{Theorem}
\newtheorem{corollary}[theorem]{Corollary}
\newtheorem{proposition}[theorem]{Proposition}
\newtheorem{lemma}[theorem]{Lemma}
\theoremstyle{definition}
\newtheorem{definition}[theorem]{Definition}
\newtheorem{example}[theorem]{Example}
\newtheorem*{example*}{Example}
\newtheorem{notation}[theorem]{Notation}
\nc{\Lemma}{\begin{lemma}}
\nc{\enlemma}{\end{lemma}}
\nc{\Prop}{\begin{proposition}}
\nc{\enprop}{\end{proposition}}
\nc{\Def}{\begin{definition}}
\nc{\edf}{\end{definition}}
\nc{\scup}{\mathop{\scalebox{.8}{$\displaystyle\bigcup$}}\mspace{1mu}\limits}
\nc{\scap}{\mathop{\scalebox{.8}{$\displaystyle\bigcap$}}\limits}
\nc{\ssqcup}{\mathop{\scalebox{.8}{$\displaystyle\bigsqcup$}}\limits}
\newcommand{\union}{\cup}
\newcommand{\Union}{\bigcup\limits}
\newcommand{\C}{\mathbb{C}}
\newcommand{\R}{\mathbb{R}}
\newcommand{\Z}{\mathbb{Z}}
\DeclareMathOperator{\id}{id}
\newcommand{\derived}[1]{\mathrm{#1}}
\newcommand{\derd}{\derived{D}}
\newcommand{\dere}{\derived{E}}
\newcommand{\derr}{\derived{R}}
\newcommand{\derl}{\derived{L}}
\nc{\derb}{\derd^{\mathrm{b}}}
\newcommand{\BDC}{\derd^{\mathrm{b}}}
\nc{\soplus}{\scalebox{.65}{\raisebox{.2ex}{$\displaystyle\bigoplus$}}}
\newcommand{\dsum}[1][]{\mathbin{\oplus_{#1}}}
\newcommand{\ilim}[1][]{\mathop{\varinjlim}\limits_{#1}}
\renewcommand{\to}[1][]{\xrightarrow{#1}}
\newcommand{\from}[1][]{\xleftarrow{#1}}
\newcommand{\isofrom}[1][]{\xleftarrow[#1]%
{\raisebox{-.4ex}[0ex][-.4ex]{$\mspace{2mu}\sim\mspace{2mu}$}}}
\newcommand{\isoto}[1][]{\xrightarrow[#1]{%
{\raisebox{-.6ex}[0ex][0ex]{$\mspace{1mu}\sim\mspace{2mu}$}}}}
\newcommand{\Endo}[1][]{\mathrm{End}_{\raise1.5ex\hbox to.1em{}#1}}
\newcommand{\Hom}[1][]{\mathrm{Hom}_{\raise1.5ex\hbox to.1em{}#1}}
\newcommand{\RHom}[1][]{\derr\mathrm{Hom}_{\raise1.5ex\hbox to.1em{}#1}}
\newcommand{\Ext}[2][]{\mathrm{Ext}_{\raise1.5ex\hbox to.1em{}#1}^{#2}}
\newcommand{\Tens}[1][]{\mathbin{\otimes_{\raise1.5ex\hbox to-.1em{}#1}}}
\newcommand{\LTens}[1][]{\mathbin{\otimes_{\raise1.5ex\hbox to-.1em{}#1}^{\derl}}}
\newcommand{\Tor}[2][]{\mathrm{Tor}^{\raise1.5ex\hbox to.1em{}#1}_{#2}}
\newcommand{\sheaffont}[1]{\mathcal{#1}}
\def\shi{\sheaffont{I}}
\def\shm{\sheaffont{M}}
\def\shp{\sheaffont{P}}
\newcommand{\rsect}{\derr\varGamma}
\newcommand{\shendo}[1][]{{\sheaffont{E}nd}_{\raise1.5ex\hbox to.1em{}#1}}
\renewcommand{\hom}[1][]{{\sheaffont{H}om}_{\raise1.5ex\hbox to.1em{}#1}}
\newcommand{\aut}[1][]{{\sheaffont{A}ut}_{\raise1.5ex\hbox to.1em{}#1}}
\newcommand{\inn}[1][]{{\sheaffont{I}nn}_{\raise1.5ex\hbox to.1em{}#1}}
\newcommand{\rhom}[1][]{{\derr\sheaffont{H}om}_{\raise1.5ex\hbox to.1em{}#1}}
\newcommand{\ext}[2][]{{\sheaffont{E}xt}_{\raise1.5ex\hbox to.1em{}#1}^{#2}}
\newcommand{\thom}[1][]{{\sheaffont{T}hom}_{\raise1.5ex\hbox to.1em{}#1}}
\newcommand{\tens}[1][]{\mathbin{\otimes_{\raise1.5ex\hbox to-.1em{}#1}}}
\newcommand{\ltens}[1][]{\mathbin{\otimes_{\raise1.5ex\hbox to-.1em{}#1}^{\derl}}}
\newcommand{\tor}[2][]{{\sheaffont{T}or}^{\raise1.5ex\hbox to.1em{}#1}_{#2}}
\newcommand{\etens}[1][]{\mathbin{\boxtimes_{\raise1.5ex\hbox to-.1em{}#1}}}
\newcommand{\oim}[1]{#1_*}
\newcommand{\eim}[1]{#1_!}
\newcommand{\roim}[1]{\derr#1_*}
\newcommand{\reim}[1]{\derr#1_{\mspace{.5mu}!}\mspace{2mu}}
\newcommand{\reeim}[1]{\derr#1_{\mspace{1mu}!!}\mspace{1mu}}
\newcommand{\opb}[1]{#1^{-1}}
\newcommand{\epb}[1]{#1^{\mspace{1.5mu}!}\mspace{2mu}}
\newcommand{\tenstop}[1][]{\mathbin{\hat{\otimes}_{\raise1.5ex\hbox to-.1em{}#1}}}
\newcommand{\homtop}[1][]{\sheaffont{L}_{\raise1.5ex\hbox to.1em{}#1}}
\newcommand{\Homtop}[1][]{\mathrm{L}_{\raise1.5ex\hbox to.1em{}#1}}
\newcommand{\D}{\sheaffont{D}}
\renewcommand{\O}{\sheaffont{O}}
\newcommand{\detens}[1][]%
{\mathbin{\boxtimes_{\raise1.5ex\hbox to-.1em{}#1}^{\mspace{2mu}\mathsf{D}}}}
\newcommand{\dtens}[1][]{\mathbin{\otimes_{\raise1.5ex\hbox to-.1em{}#1}^{\mathsf{D}}}}
\newcommand{\hol}{\mathrm{hol}}
\newcommand{\reghol}{{\mathrm{rh}}}
\renewcommand{\leq}{\leqslant}
\renewcommand{\geq}{\geqslant}
\newcommand{\field}{\mathbf{k}}
\newcommand{\ind}{\mathrm{I}\mspace{2mu}}
\newcommand{\ifield}{\ind\field}
\newcommand{\Rc}{{\R\text-\mathrm{c}}}
\newcommand{\Cc}{{\C\text-\mathrm{c}}}
\newcommand{\reg}{{\operatorname{reg}}}
\newcommand{\stab}{{\operatorname{st}}}
\newcommand{\cl}{\colon}
\newcommand{\ctens}{\mathbin{\mathop\otimes\limits^+}}
\newcommand{\cetens}{\mathbin{\mathop\boxtimes\limits^+}}
\newcommand{\cihom}{{\derr\shi hom}^+}
\newcommand{\dr}{\mathcal{DR}}
\renewcommand{\Re}{\operatorname{Re}}
\newcommand{\ihom}[1][]{{\shi hom}_{\raise1.5ex\hbox to.1em{}#1}}
\newcommand{\rihom}[1][]{{\derr\mspace{2mu}\shi hom}_{\raise1.5ex\hbox to.1em{}#1}}
\newcommand{\ii}[1][]{{\sheaffont{I}h}_{\raise1.5ex\hbox to.1em{}#1}}
\newcommand{\indlim}[1][]{\mathop{\text{\rm``$\varinjlim$''}}\limits_{#1}}
\newcommand{\dcomp}[1][]{\mathbin{\circ_{\raise1.5ex\hbox to-.1em{}#1}^{\mathsf{D}}}}
\newcommand{\enh}{\derived{E}}
\newcommand{\drE}[1][X]{\mathcal{DR}^\enh_{#1}}
\newcommand{\fhom}{\rhom^\enh}
\newcommand{\FHom}{\RHom^\enh}
\newcommand{\fihom}{\rihom^\enh}
\newcommand{\BEC}[2][\ifield]{\dere^{\mathrm{b}}(#1_{#2})}
\newcommand{\BECRc}[2][\ifield]{\dere^{\mathrm{b}}_\Rc(#1_{#2})}
\newcommand{\BECwRc}[2][\ifield]{\dere^{\mathrm{b}}_\wRc(#1_{#2})}
\newcommand{\BECcon}[2][\ifield]{\dere^{\mathrm{b}}_{\bGmp}(#1_{#2})}
\newcommand{\BECp}[2][\ifield]{\dere^{\mathrm{b}}_+(#1_{#2})}
\newcommand{\BECs}[2][\ifield]{\dere^{\mathrm{b}}_\stab(#1_{#2})}
\newcommand{\BECm}[2][\ifield]{\dere^{\mathrm{b}}_-(#1_{#2})}
\newcommand{\BECpm}[2][\ifield]{\dere^{\mathrm{b}}_\pm(#1_{#2})}
\newcommand{\Edual}{\dual^\enh}
 \newcommand{\Eoim}[1]{\enh#1_*}
\newcommand{\Eeeim}[1]{\enh#1_{!!}}
\newcommand{\Eopb}[1]{\enh#1^{-1}}
\newcommand{\Eepb}[1]{\enh\mspace{1mu}#1^{\mspace{1.5mu}!}}
\newcommand{\LE}{\operatorname{L^\enh}}
\newcommand{\RE}{\operatorname{R^\enh}}
\newcommand{\semicolon}{\nobreak \mskip2mu\mathpunct{}\nonscript\mkern-\thinmuskip{;}\mskip6mu plus1mu\relax}
\newcommand{\dual}{\mathrm{D}}
\newcommand{\defeq}{\mathbin{:=}}
\newcommand{\eqdef}{\mathbin{=:}}
\newcommand{\bl}{\bigl(}
\newcommand{\br}{\bigr)}
\newcommand{\To}[1][]{\xrightarrow[]{\mspace{10mu}{#1}\mspace{10mu}}}
\newenvironment{myarray}[1]{\relax\setlength{\arraycolsep}{1pt}

\begin{array}{#1}}{\end{array}\relax}
\newcommand{\ba}{\begin{myarray}}
\newcommand{\ea}{\end{myarray}}
\newcommand{\be}{\begin{enumerate}}
\newcommand{\ee}{\end{enumerate}}
\newcommand{\bnum}{\be[{\rm(i)}]}
\nc{\bwr}{\mbox{\large{$\wr$}}}
\nc{\vphi}{\varphi}
\nc{\seteq}{\mathbin{:=}}
\nc{\noi}{\noindent}
\nc{\ro}{{\rm(}}
\nc{\rf}{{\rm)}\xspace}
\nc{\ms}{\mspace}
\nc{\sbcup}{\mathop{\scalebox{0.75}{$\displaystyle\bigcup$}}}
\nc{\ol}{\overline}
\nc{\scbul}{{\,\raise1pt\hbox{$\scriptscriptstyle\bullet$}\,}}
\nc{\set}[2]{\left\{#1\;\semicolon\; #2 \right\}}
\nc{\extp}{\mathop{\raisebox{.3ex}{\scalebox{0.8}{$\displaystyle\bigwedge$}}}\limits}
\newenvironment{myequation}
{\relax\setlength{\arraycolsep}{1pt}\begin{eqnarray}}
{\end{eqnarray}}
\newenvironment{myequationn}
{\relax\setlength{\arraycolsep}{1pt}\begin{eqnarray*}}
{\end{eqnarray*}}
\newenvironment{myalign}
{\relax\begin{align}}
{\end{align}}
\newenvironment{myalignn}
{\relax\begin{align*}}
{\relax\end{align*}}
\nc{\eq}{\begin{myequation}}
\nc{\eneq}{\end{myequation}}
\nc{\eqn}{\begin{myequationn}}
\nc{\eneqn}{\end{myequationn}}
\nc{\eqa}{\begin{myalign}}
\nc{\eneqa}{\end{myalign}}
\nc{\eqan}{\begin{myalignn}}
\nc{\eneqan}{\end{myalignn}}
\nc{\on}{\operatorname}
\nc{\Ind}{\on{Ind}}
\nc{\Proof}{\begin{proof}}
\nc{\QED}{\end{proof}}
\nc{\cor}{\field}
\nc{\tone}{\To[+1]}
\renewcommand{\ge}{\geq}
\renewcommand{\le}{\leq}
\newcommand{\LEp}{\operatorname{L}^\enh_+}
\newcommand{\REp}{\operatorname{R}^\enh_+}
\renewcommand{\tor}{\mathrm{tor}}
\newcommand{\bclose}[1]{{\accentset{\vee}{#1}}}
\newcommand{\unbordered}[1]{{\accentset{\circ}{#1}}}
\newcommand{\bR}{{\R_\infty}}
\newcommand{\cR}{{\overline\R}}
\newcommand{\bM}{\inbordered{M}}
\nc{\unb}{\unbordered}
\nc{\eps}{\varepsilon}
\nc{\inb}{\inbordered}
\nc{\colim}{\varinjlim\limits}
\nc{\ssubset}{\subset\ms{-3mu}\subset}
\nc{\al}{\alpha}
\nc{\qtq}[1][and]{\quad\text{#1}\quad}
\nc{\qt}[1]{\quad\text{#1}}
\nc{\olG}[1][f]{{\overset{\ms{4mu}\rule[-.05ex]{1.6ex}{.115ex}}{\Gamma}}_{%
\ms{-3mu}#1}}
\newcommand{\bN}{\inbordered{N}}
\nc{\cf}{\bclose{f}}
\nc{\cp}{\bclose{p}}
\newcommand{\inbordered}[1]{{#1_\infty}}
\newcommand{\bZ}{\inbordered{Z}}
\newcommand{\bU}{\inbordered{U}}
\DeclareMathOperator{\quot}{Q}
\newcommand{\Efield}{\field^\enh}
\newcommand{\Ex}{\mathbb{E}}
\newcommand{\ex}{\mathsf{E}}
\newcommand{\exx}{\ex}
\newcommand{\Lap}{\mathsf{L}}
\newcommand{\lap}{{}^\Lap}
\newcommand{\Lapa}{{\mathpalette\Lapatemp\relax}}
\newcommand{\Lapatemp}[2]{\reflectbox{$#1\Lap$}}
\newcommand{\lapa}{{}^{\Lapa}}
\nc{\tM}{\widetilde{M}}
\nc{\tX}{\widetilde{X}}
\nc{\ti}{{\tilde\imath}}
\nc{\tj}{{\tilde\jmath}}
\newcommand{\dt}[1]{\widetilde{#1}^{\mathsf d}}
\newcommand{\dtd}[1]{\widetilde{#1}^{\mathsf d,\times}}
\nc{\dtM}{\dt M}
\nc{\dtdM}{\dtd M}
\nc{\dti}{\tilde\imath^{\mathsf d}}
\newcommand{\range}[2]{#1 \mathbin{\rhd} #2}
\newcommand{\dotowns}{\mathbin{\bdot\owns}}
\newcommand{\dotsupset}{\mathbin{\bdot\supset}}
\newcommand{\wRc}{{\mathrm{w}\text-\R\text-\mathrm{c}}}
\newcommand{\rb}{\mathsf{rb}}
\newcommand{\pb}{\mathsf{pb}}
\newcommand{\nd}{\mathsf{nd}}
\newcommand{\sm}{\mathsf{sm}}
\newcommand{\sph}{\mathsf{sph}}
\nc{\st}[1]{{\{{#1}\}}}
\nc{\bP}{\mathbb{P}}
\nc{\into}{\hookrightarrow}
\nc{\fR}{{\R_\infty}}
\nc{\cS}{\bclose{S}}
\nc{\RB}[2][N]{#2_{#1}^{\rb}}
\nc{\prb}{p_{\rb}}
\nc{\PB}[2][N]{#2_{#1}^{\pb}}
\nc{\ppb}{p_{\pb}}
\nc{\ND}[2][N]{#2_{#1}^{\nd}}
\nc{\pnd}{p_{\nd}}
\nc{\snd}{s_{\nd}}
\nc{\dND}[2][N]{\bdot{#2}_{#1}^\nd}
\nc{\psm}{p_{\sm}}
\newcommand{\sh}{\mathsf{sh}}
\newcommand{\Ish}{\mathrm{I}\mathsf{sh}}
\newcommand{\Enu}{\enh\nu}
\newcommand{\Emu}{\enh\mu}
\newcommand*\bigcdot{\mathpalette\bigcdot@{.5}}
\newcommand*\bigcdot@[2]{\mathbin{\vcenter{\hbox{\scalebox{#2}{$\m@th#1\bullet$}}}}}
\newcommand{\bdot}[1]{{\accentset{\bigcdot}{#1}}\vphantom{#1}}
\newcommand{\Gmp}{\R^\times_{>0}}
\newcommand{\bGmp}{\inb{(\Gmp)}}
\newcommand{\Bd}{\mathsf{B}}
\newcommand{\bbM}{\mathsf{M}}
\newcommand{\ccM}{C}
\newcommand{\obM}{\unbordered{\bbM}}
\newcommand{\cbM}{\bclose{\bbM}}
\newcommand{\bbN}{\mathsf{N}}
\newcommand{\obN}{\unbordered{\bbN}}
\newcommand{\cbN}{\bclose{\bbN}}
\newcommand{\bbS}{\mathsf{S}}
\newcommand{\cc}{\bclose}
\newcommand{\oo}{\unbordered}
\newcommand{\of}{\oo f}
\nc{\oloG}[1][\of]{{\overset{\ms{4mu}\rule[-.05ex]{1.6ex}{.115ex}}{\Gamma}}_{%
\ms{-3mu}#1}}
\nc{\inc}[1][\bbM]{k_{#1}}
\nc{\ct}{\ol{t}}
\begin{document}

\title[On a topological counterpart of regularization]{On a topological counterpart of regularization for holonomic $\D$-modules}

\author[A.~D'Agnolo]{Andrea D'Agnolo}
\address[Andrea D'Agnolo]{Dipartimento di Matematica\\
Universit{\`a} di Padova\\
via Trieste 63, 35121 Padova, Italy}
\thanks{The research of A.D'A.\
was partially supported by GNAMPA/INdAM. He acknowledges the kind hospitality at RIMS of
Kyoto University during the preparation of this paper.
}
\email{dagnolo@math.unipd.it}

\author[M.~Kashiwara]{Masaki Kashiwara}
\thanks{The research of M.K.\
was supported by Grant-in-Aid for Scientific Research (B)
15H03608, Japan Society for the Promotion of Science}
\address[Masaki Kashiwara]{
Kyoto University Institute for Advanced study,
Research Institute for Mathematical Sciences, Kyoto University,
Kyoto 606-8502, Japan \& Korea Institute for Advanced Study, Seoul 02455, Korea}
\email{masaki@kurims.kyoto-u.ac.jp}

\keywords{irregular Riemann-Hilbert
correspondence, enhanced perverse sheaves, holonomic D-modules}
\subjclass[2010]{Primary 32C38, 14F05}

\maketitle

\begin{abstract}
On a complex manifold, the embedding of the category of regular holonomic $\D$-modules into that of holonomic $\D$-modules has a left quasi-inverse functor $\shm\mapsto\shm_\reg$, called regularization. Recall that $\shm_\reg$ is reconstructed from the de Rham complex of $\shm$ by the regular Riemann-Hilbert correspondence. Similarly, on a topological space, the embedding of sheaves into enhanced ind-sheaves has a left quasi-inverse functor, called here sheafification. Regularization and sheafification are intertwined by the irregular Riemann-Hilbert correspondence.
Here, we study some of the properties of the sheafification functor. In particular, we provide a germ formula for the sheafification of enhanced specialization and microlocalization.
\end{abstract}

\tableofcontents

\addtocontents{toc}{\protect\setcounter{tocdepth}{1}}
\numberwithin{equation}{section}

\section{Introduction}
Let $X$ be a complex manifold. 
The regular Riemann-Hilbert correspondence (see \cite{Kas84}) states that 
the de Rham functor induces an equivalence between the triangulated category of regular holonomic $\D$-modules and that of $\C$-constructible sheaves. More precisely, one has a diagram 
\begin{equation}\label{eq:introRH}
\xymatrix@R=3ex@C=3em{
\BDC_\hol(\D_X) \ar@<1ex>[dr]^{\dr} \\
\BDC_\reghol(\D_X) \ar@{ >->}[u]^\iota \ar@{ ->}@<.6ex>[r]^-\dr_-\sim  
& \BDC_\Cc(\C_X) \ar@{ ->}@<1ex>[l]^{\Phi}
}
\end{equation}
where $\iota$ is the embedding (i.e.\ fully faithful functor) of regular holonomic $\D$-modules into holonomic $\D$-modules, the triangle quasi-commutes, $\dr$ is the de Rham functor, and $\Phi$ is an (explicit) quasi-inverse to $\dr$. 

The {\em regularization} functor $\reg\colon\BDC_\hol(\D_X)\to\BDC_\reghol(\D_X)$ is defined by $\shm_\reg\defeq \Phi(\dr(\shm))$. It is a left quasi-inverse to $\iota$, of transcendental nature.
Recall that $(\iota,\reg)$ is \emph{not} a pair of adjoint functors\footnote{By saying that $(\iota,\reg)$ is a pair  of adjoint functors, we mean that $\iota$ is the left adjoint of $\reg$.}. Recall also that $\reg$ is conservative\footnote{In fact, if $\shm_\reg\simeq 0$ then $\dr(\shm)\simeq\dr(\shm_\reg)\simeq 0$, and hence $\shm\simeq0$.}.

Let $\field$ be a field and $M$ be a good topological space. Consider the natural embeddings $\xymatrix@C=3ex{\BDC(\field_M) \ar@{ >->}[r]^\iota & \BDC(\ifield_M) \ar@{ >->}[r]^e & \BECs M }$ of sheaves into ind-sheaves into stable enhanced ind-sheaves. 
One has pairs of adjoint functors $(\alpha,\iota)$ and $(e,\Ish)$, and we set $\sh\defeq\alpha\,\Ish$:
\[
\sh\colon \BECs M \to[\Ish] \BDC(\ifield_M) \to[\alpha] \BDC(\field_M).
\]
We call $\Ish$ and $\sh$ the {\em ind-sheafification} and {\em sheafification} functor, respectively. The functor $\sh$ is a left quasi-inverse of $e\,\iota$. 

For $\field=\C$ and $M=X$, the irregular Riemann-Hilbert correspondence (see \cite{DK16}) 
intertwines the pair $(\iota,\reg)$ with the pair $(e\,\iota,\sh)$.
In particular, the pair $(e\,\iota,\sh)$ is \emph{not} a pair of adjoint functors in general.

With the aim of better understanding the rather elusive regularization functor,
in this paper we study some of the properties of the ind-sheafification and sheafification functors.

More precisely, the contents of the paper are as follows.

In \S\ref{se:not}, besides recalling notations, we establish some complementary results on ind-sheaves on bordered spaces that we need in the following. 
Further complements are provided in  Appendix~\ref{se:compl}.

Some functorial properties of ind-sheafification and sheafification are obtained in \S\ref{se:sh}. 
In \S\ref{se:germ}, we obtain a germ formula for the sheafification of a pull-back by an embedding. Then, these results are used in section \S\ref{se:numu} to obtain a germ formula for the sheafification of enhanced specialization and microlocalization. In particular, the formula for the specialization puts in a more geometric perspective what we called \emph{multiplicity test functor} in \cite[\S6.3]{DK18}. 

Finally, we provide in Appendix~\ref{se:wc} a formula for the sections of a weakly constructible sheaf on a locally closed subanalytic subset, which could be of independent interest.

\section{Notations and complements}\label{se:not}
We recall here some notions and results, mainly to fix notations, referring to the literature for details. In particular, we refer to \cite{KS90} for sheaves, to \cite{Tam08} (see also \cite{GS14,DK19}) for enhanced sheaves, to \cite{KS01} for ind-sheaves, and to \cite{DK16} (see also \cite{KS16D,Kas16,DK19}) for bordered spaces and enhanced ind-sheaves. We also add some complements.

\medskip

In this paper, $\field$ denotes a base field.

A good space is a topological space which is Hausdorff, locally compact, countable at infinity, and with finite soft dimension. 
 
By subanalytic space we mean a subanalytic space which is also a good space.

\subsection{Bordered spaces}
The category of bordered spaces has for objects the pairs $\bbM=(M,\ccM)$ with $M$ an open subset of a good space $\ccM$. Set $\obM\defeq M$ and $\cbM\defeq\ccM$.
A morphism $f\colon\bbM\to\bbN$ is a morphism $\of\colon \obM\to \obN$ of good spaces such that the projection $\oloG\to \cbM$ is proper. Here, $\oloG$ denotes the closure in $\cbM\times\cbN$ of the graph $\Gamma_{\oo f}$ of $\of$.

Note that $\bbM\mapsto\cbM$ is not a functor.
The functor $\bbM\mapsto\obM$ is right adjoint to the embedding $M\mapsto(M,M)$ of good spaces into bordered spaces.
We will write for short $M=(M,M)$.

Note that the inclusion $\inc\colon\obM\to\cbM$ factors into
\begin{equation}
\label{eq:k}
\xymatrix{
\inc\;\colon\; \obM \ar[r]^-{i_\bbM} & \bbM\ar[r]^-{j_\bbM} &\cbM.
}
\end{equation}

By definition, a subset $Z$ of $\bbM$ is a subset of $\obM$.
We say that $Z\subset\bbM$ is open (resp.  closed, locally closed) if it is so in $\obM$.
For a locally closed subset $Z$ of $\bbM$,
we set $\bZ=(Z,\overline Z)$ where $\overline Z$ is the closure of $Z$ in $\cbM$.
Note that $\bU\simeq(U,\cbM)$ for $U\subset\bbM$ open.

We say that $Z$ is a relatively compact subset of $\bbM$
if it is contained in a compact subset of $\cbM$.
Note that this notion does not depend on the choice of $\cbM$. 

An {\em open covering} $\st{ U_i}_{i\in I}$ of a bordered space $\bbM$ is
an open covering of $\obM$ which satisfies the condition:
for any relatively compact subset $Z$ of $\bbM$ there exists a finite subset $I'$ of $I$ such that $Z\subset\sbcup_{i\in I'}U_i$.

We say that a morphism $f\colon\bbM\to\bbN$ is
\bnum
\item {\em an open embedding} if
$\oo f$ is a homeomorphism from $\obM$ onto an open subset of $\obN$,
\item
{\em borderly submersive} if 
there exists an open covering $\st{U_i}_{i\in I}$ of $\bbM$ 
such that for any $i\in I$ there exist
 a subanalytic space $S_i$ and an open embedding  
$g_i\cl\inb{(U_i)}\to S_i\times\bbN$
with
a commutative diagram of bordered spaces
$$\xymatrix@C=8ex@R=4ex{\inb{(U_i)}\ar[r]\ar[d]_{g_i}&\bbM\ar[d]^f\\
S_i\times\bbN\ar[r]_--{p_i}&\bbN,}$$
where $p_i$ is the projection,
\item
{\em semiproper} if $\oloG\to \cbN$ is proper,
\item
{\em proper} if it is semiproper and $\oo f\colon \obM\to \obN$ is proper,
\item
{\em self-cartesian} if the diagram $\xymatrix{
\obM \ar[r]_{\oo f} \ar[d]_{i_\bbM} & \obN \ar[d]^{i_\bbN} \\
\bbM \ar[r]^f & \bbN
}$ is cartesian.
\ee

Recall that, by \cite[Lemma~3.3.16]{DK16}, a morphism $f\colon\bbM\to\bbN$ is proper if and only if it is semiproper and self-cartesian.

\subsection{Ind-sheaves on good spaces}\label{sse:indingood}

Let $M$ be a good space.

We denote by $\BDC(\field_M)$ the bounded derived category of sheaves of $\field$-vector spaces on $M$.
For $S\subset M$ locally closed, we denote by $\field_S$ the extension by zero to $M$ of the constant sheaf on $S$ with stalk $\field$.

For $f\colon M\to N$ a morphism of good spaces, denote by $\tens$, $\opb f$, $\reim f$ and $\rhom$, $\roim f$, $\epb f$ the six operations. 
Denote by $\etens$ the exterior tensor and by $\dual_M$ the Verdier dual.

We denote by $\BDC(\ifield_M)$ the bounded derived category of ind-sheaves of $\field$-vector spaces on $M$, and by $\tens$, $\opb f$, $\reeim f$ and $\rihom$, $\roim f$, $\epb f$ the six operations. 
Denote by $\etens$ the exterior tensor and by $\dual_M$ the Verdier dual.

There is a natural embedding $\iota_M\colon\BDC(\field_M)\to\BDC(\ifield_M)$. It has a left adjoint $\alpha_M$, which in turn has a left adjoint $\beta_M$. 
The commutativity of these functors with the operations is as follows
\begin{equation}\label{eq:table}
\begin{tabular}{l||c|c|c|c|c}
{} & $\etens$ & $f^{-1}$ & $\roim f$ & $f^!$ & $\reeim f$  \\ \hline \hline 
$\underset{}{\iota}$  & $\circ$ & $\circ$ & $\circ$ & $\circ$ & $\times$ \\ \hline
$\underset{}{\alpha}$  & $\circ$ & $\circ$ & $\circ$ & $\times$ & $\circ$  \\ \hline
$\underset{}{\beta}$  & $\circ$ & $\circ$ & $\times$ & $\times$ & $\times$
\end{tabular}
\end{equation}
where ``$\circ$'' means that the functors commute,
and ``$\times$'' that they don't.

\subsection{Ind-sheaves on bordered spaces}

Let $\bbM$ be a bordered space.

Setting $\BDC(\field_\bbM)\defeq \BDC(\field_\cbM)/\BDC(\field_{\cbM\setminus\obM})$, one has $\BDC(\field_\bbM)\simeq\BDC(\field_\obM)$.

The bounded derived category of ind-sheaves of $\field$-vector spaces on $\bbM$ is defined by $\BDC(\ifield_\bbM)\defeq \BDC(\ifield_\cbM)/\BDC(\ifield_{\cbM\setminus\obM})$. For operations, we use the same notations as in the case of good spaces.

Recall (see \cite[Proposition 3.3.19]{DK16}\footnote
{The statement of this proposition is erroneous. 
The first isomorphism in loc.\ cit.\ may not hold
under the condition that $\oo f$  is topologically submersive.
However, it holds if $f$ is borderly submersive.
The second isomorphism, i.e.\ \eqref{eq:tsepb}, holds under the condition that $\oo f$  is topologically submersive.}) that
\eq
&&\text{$\reeim f\simeq\roim f$\quad if $f$ is proper,}\\
&&\text{$\epb f\simeq\epb f \field_{\obN}\tens\opb f$\quad
if $f\colon\bbM\to\bbN$ is  borderly  submersive.}
\label{eq:tsepb}
\eneq
The last statement implies
\eq
&&\text{$\epb f$ commutes with $\alpha$
if $f$ is  borderly  submersive.}\label{st:tsepb} 
\eneq
 With notations \eqref{eq:k}, \eqref{eq:tsepb} implies that
\begin{equation}\label{eq:i*!}
\opb i_\bbM \simeq \epb{i_\bbM},\quad  \opb j_\bbM \simeq \epb{j_\bbM}. 
\end{equation}

The quotient functor $\BDC(\ifield_\cbM)\to \BDC(\ifield_\bbM)$ 
is isomorphic to $\opb j_\bbM\simeq\epb{j_\bbM}$
and has a left  adjoint $\reeim{{j_\bbM}}$ and a right adjoint $\roim{{j_\bbM}}$, both fully faithful.

There is a natural embedding $$\iota_\bbM\colon\BDC(\field_\obM)\simeq\BDC(\field_\bbM)\to\BDC(\ifield_\bbM)$$ induced by $\iota_\cbM$. It has a left adjoint 
$$\alpha_\bbM\cl \BDC(\ifield_\bbM)\to \BDC(\field_\obM),$$
which in turn has a left adjoint $\beta_\bbM$.
One sets 
$\rhom\defeq\alpha_\bbM\rihom$, a functor with values in $\BDC(\field_\obM)$.

For $F\in \BDC(\field_\obM)$,
we often simply write  $F$ instead of $\iota_\bbM F$
in order to make notations less heavy. 

The functors $\iota_\bbM$, $\alpha_\bbM$ and $\beta_\bbM$ are exact. Moreover, $\iota_\bbM$ and $\beta_\bbM$ are fully faithful. This was shown in \cite{KS01} in the case of good spaces. The general case reduces to the former by the

\begin{lemma}\label{lem:iab}
One has
\begin{itemize}
\item[(i)]
 $\iota_\bbM \simeq \opb j_\bbM \,\iota_\cbM \,\roim{{\inc}} \simeq \roim{{i_\bbM}}\, \iota_\obM$,
\item[(ii)] 
$\alpha_\bbM \simeq \opb\inc \,\alpha_\cbM \,\reeim{{j_\bbM}} \simeq \alpha_\obM\; \opb i_\bbM$,
\item[(iii)] 
$\beta_\bbM \simeq \reeim{{i_\bbM}}\,  \beta_\obM$.
\end{itemize}
\end{lemma}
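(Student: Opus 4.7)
The plan is to prove (i) directly from the definition of $\iota_\bbM$ as the functor induced on the quotient $\BDC(\field_\bbM) = \BDC(\field_\cbM)/\BDC(\field_{\cbM\setminus\obM})$, and then to deduce (ii) and (iii) formally by taking left adjoints and invoking uniqueness of adjoint functors, using only the elementary adjunctions already recorded in the excerpt.

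For (i), I would start from the relation $\iota_\bbM \opb j_\bbM \simeq \opb j_\bbM \iota_\cbM$, which holds by construction of $\iota_\bbM$. Given $F \in \BDC(\field_\obM)$, I lift it to $\roim\inc F \simeq \roim{j_\bbM}\roim{i_\bbM} F$ in $\BDC(\field_\cbM)$. Since $\roim{j_\bbM}$ is fully faithful, we get $\opb j_\bbM \roim\inc F \simeq \roim{i_\bbM} F$, and the latter represents $F$ under the identification $\BDC(\field_\bbM) \simeq \BDC(\field_\obM)$ (whose quasi-inverse is $\roim{i_\bbM}$). Applying $\iota_\bbM$ and using the construction relation yields the first isomorphism $\iota_\bbM F \simeq \opb j_\bbM \iota_\cbM \roim\inc F$. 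For the second, I commute $\iota_\cbM$ past $\roim\inc$ using the commutation $\iota \circ \roim{} \simeq \roim{} \circ \iota$ recorded in \eqref{eq:table}, factor $\roim\inc \simeq \roim{j_\bbM}\roim{i_\bbM}$, and cancel $\opb j_\bbM \roim{j_\bbM} \simeq \id$.

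For (ii), I take left adjoints of the two forms in (i). The relevant facts are: on good spaces, $\alpha$ is left adjoint to $\iota$; on bordered spaces, $\reeim f$ is left adjoint to $\epb f$, and by \eqref{eq:i*!} one has $\opb{j_\bbM} \simeq \epb{j_\bbM}$ and $\opb{i_\bbM} \simeq \epb{i_\bbM}$, so $\reeim{j_\bbM}$ is left adjoint to $\opb{j_\bbM}$ and $\reeim{i_\bbM}$ is left adjoint to $\opb{i_\bbM}$; finally, $\opb\inc$ is left adjoint to $\roim\inc$ since $\inc$ is an open embedding. Composing these adjoints in reverse order produces the two formulas $\alpha_\bbM \simeq \opb\inc \alpha_\cbM \reeim{j_\bbM}$ and $\alpha_\bbM \simeq \alpha_\obM \opb{i_\bbM}$. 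Then (iii) follows by taking the left adjoint of the second form in (ii): $\reeim{i_\bbM}$ is left adjoint to $\opb{i_\bbM}$ and $\beta_\obM$ is left adjoint to $\alpha_\obM$, yielding $\beta_\bbM \simeq \reeim{i_\bbM} \beta_\obM$.

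The main obstacle is purely bookkeeping: one must track carefully the identification $\BDC(\field_\bbM) \simeq \BDC(\field_\obM)$ and the factorization $\inc = j_\bbM \circ i_\bbM$ so that the quotient construction of $\iota_\bbM$ is computed correctly in terms of lifts to $\BDC(\field_\cbM)$. Once (i) is in place, the rest is a formal adjunction exercise and no substantial analytic input is needed.
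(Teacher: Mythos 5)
Your proposal is correct and takes essentially the same route as the paper: (i) via the chain $\opb j_\bbM\,\iota_\cbM\,\roim{{\inc}}\simeq\opb j_\bbM\,\roim{{\inc}}\,\iota_\obM\simeq\opb j_\bbM\,\roim{{j_\bbM}}\,\roim{{i_\bbM}}\,\iota_\obM\simeq\roim{{i_\bbM}}\,\iota_\obM$, using the commutation of $\iota$ with direct images from \eqref{eq:table} and the full faithfulness of $\roim{{j_\bbM}}$, with (ii) and (iii) then deduced by passing to left adjoints, exactly as in the paper. Your slight abuses of notation (using $\opb j_\bbM$ and $\roim{{i_\bbM}}$ also at the sheaf level) only spell out what the paper leaves implicit in saying that $\iota_\bbM$ is induced by $\iota_\cbM$.
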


\begin{proof}
One has
\[
\opb j_\bbM\, \iota_\cbM \,\roim{{\inc}}
\underset{(*)}\simeq \opb j_\bbM \,\roim{{\inc}}\; \iota_\obM 
\simeq \opb j_\bbM \,\roim{{j_\bbM}} \,\roim{{i_\bbM}}\; \iota_\obM 
\simeq \roim{{i_\bbM}}\; \iota_\obM ,
\]
where $(*)$ follows from \eqref{eq:table}.

This proves (i). Then (ii) and (iii) follow by adjunction.
\end{proof}

For bordered spaces, the commutativity of the functor $\alpha$  with the operations is as follows.

\begin{lemma}\label{lem:alphaop}
Let $f\colon\bbM\to\bbN$ be a morphism of bordered spaces.
\begin{itemize}
\item[(i)] 
There are a natural isomorphism and a natural morphism of functors
\[
\oo f{}^{-1} \, \alpha_{\bbN} \simeq \alpha_{\bbM} \, \opb f , \qquad
 \alpha_{\bbM} \, \epb f \to \oo f{}^! \, \alpha_{\bbN} ,
\]
and the above morphism is an isomorphism if $f$ is  borderly  submersive.
\item[(ii)]
There are natural morphisms of functors
\[
 \reim {\oo f} \, \alpha_{\bbM} \to \alpha_{\bbN} \, \reeim f,  \qquad
\alpha_{\bbN} \, \roim f \to \roim {\oo f} \, \alpha_{\bbM},
\]
which are isomorphisms if $f$ is self-cartesian.
\item[(iii)]
For $K\in\BDC(\ifield_\bbM)$ and $L\in\BDC(\ifield_\bbN)$ one has
\[
\alpha_{\bbM\times\bbN}(K\etens L) \simeq (\alpha_\bbM K)\etens(\alpha_\bbN L).
\]
\end{itemize}
\end{lemma}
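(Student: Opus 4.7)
The strategy is to reduce every assertion to the corresponding commutation property for good spaces, as encoded in table \eqref{eq:table}, by means of Lemma \ref{lem:iab}(ii), which rewrites $\alpha_\bbM \simeq \alpha_\obM \,\opb i_\bbM$, together with the identity $f \comp i_\bbM = i_\bbN \comp \oo f$ of morphisms of bordered spaces and, when needed, the identification $\opb i_\bbM \simeq \epb{i_\bbM}$ from \eqref{eq:i*!} (since $i_\bbM$ is an open embedding, hence borderly submersive).

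For the first isomorphism in (i), I would compute
\begin{equation*}
\alpha_\bbM \,\opb f \simeq \alpha_\obM \,\opb i_\bbM \,\opb f \simeq \alpha_\obM \,\oo f{}^{-1} \,\opb i_\bbN \simeq \oo f{}^{-1}\, \alpha_\obN \,\opb i_\bbN \simeq \oo f{}^{-1} \, \alpha_\bbN,
\end{equation*}
the third step being commutation of $\alpha$ with $\opb{}$ on good spaces. The natural morphism $\alpha_\bbM \,\epb f \to \oo f{}^!\, \alpha_\bbN$ is then built by adjunction (equivalently, from the morphism for $\reeim f$ in (ii), using the counit $\reeim f \,\epb f \to \id$). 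When $f$ is borderly submersive, \eqref{eq:tsepb} gives $\epb f K \simeq \epb f \field_{\obN} \tens \opb f K$; exactness and tensor-compatibility of $\alpha$, combined with the already-established commutation with $\opb f$, reduce the claim to the analogous identity on good spaces for the topologically submersive morphism $\oo f$.

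For (ii), both morphisms are obtained by adjunction from the isomorphism of (i). When $f$ is self-cartesian, the square with sides $i_\bbM, \oo f, f, i_\bbN$ is cartesian by definition, so proper base change yields $\opb i_\bbN \,\reeim f \simeq \reeim{\oo f} \,\opb i_\bbM$. Combining with Lemma \ref{lem:iab}(ii) and commutation of $\alpha$ with $\reeim{}$ on good spaces one finds
\begin{equation*}
\alpha_\bbN \,\reeim f \simeq \alpha_\obN \,\opb i_\bbN \,\reeim f \simeq \alpha_\obN \,\reeim{\oo f} \,\opb i_\bbM \simeq \reim{\oo f} \,\alpha_\obM \,\opb i_\bbM \simeq \reim{\oo f} \,\alpha_\bbM,
\end{equation*}
and analogously for $\roim f$ using that $\alpha$ commutes with $\roim{\oo f}$ on good spaces. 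Part (iii) reduces by the same device, using $i_{\bbM\times\bbN} \simeq i_\bbM \times i_\bbN$ and compatibility of $\alpha$ with $\etens$ on good spaces.

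The main obstacle I anticipate lies in the borderly submersive case of (i): one must carefully thread \eqref{eq:tsepb} with the projection formula and the good-space comparison $\alpha_\obM(\epb f \field_\obN) \simeq \oo f{}^! \field_\obN$ so as to commute $\alpha$ past $\epb f$ cleanly, in particular checking that the dualizing object $\epb f \field_\obN$ behaves as expected under the $\alpha_\bbM \simeq \alpha_\obM \opb i_\bbM$ reduction. Once (i) is in place, (ii) and (iii) follow essentially formally by adjunction, proper base change, and the Künneth-type compatibility on good spaces.
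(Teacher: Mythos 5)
Your overall strategy is the paper's own: rewrite $\alpha_\bbM\simeq\alpha_\obM\,\opb i_\bbM$ via Lemma~\ref{lem:iab}~(ii), use $f\comp i_\bbM=i_\bbN\comp\oo f$, \eqref{eq:i*!} and the table \eqref{eq:table}, and conclude by base change in the self-cartesian case and by \eqref{eq:tsepb} in the borderly submersive case; your chains for the first isomorphism of (i), for (ii) in the self-cartesian case, and for (iii) coincide with the paper's.

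The weak point is the construction of the two one-way morphisms for a general $f$. You propose to obtain $\alpha_\bbM\,\epb f\to\oo f{}^!\,\alpha_\bbN$ ``by adjunction (equivalently from the morphism for $\reeim f$ in (ii))'' and, in (ii), to obtain both morphisms ``by adjunction from the isomorphism of (i)''. Taken together this is circular unless one of the two is produced independently, and the morphism $\reim{\oo f}\,\alpha_\bbM\to\alpha_\bbN\,\reeim f$ does \emph{not} follow by pure adjunction from $\oo f{}^{-1}\alpha_\bbN\simeq\alpha_\bbM\,\opb f$: the adjoints available ($\epb f$ for $\reeim f$, and $\iota$, $\beta$ for $\alpha$) never bring you back to $\opb f$ or $\oo f{}^{-1}$, and $\iota$, $\beta$ do not commute with proper direct images by \eqref{eq:table}, so there is no formal derivation. (Your claim is fine for the other morphism: $\alpha_\bbN\,\roim f\to\roim{\oo f}\,\alpha_\bbM$ is indeed adjoint to $\oo f{}^{-1}\alpha_\bbN\roim f\simeq\alpha_\bbM\opb f\roim f\to\alpha_\bbM$.) The missing ingredient is exactly the extra input you list but do not use here, namely $\opb{i_\bbM}\simeq\epb{i_\bbM}$ from \eqref{eq:i*!}: it yields, for arbitrary $f$, the base-change morphism $\reeim{\oo f}\,\opb{i_\bbM}\to\opb{i_\bbN}\,\reeim f$, adjoint to $\reeim{{i_\bbN}}\,\reeim{\oo f}\,\epb{i_\bbM}\simeq\reeim f\,\reeim{{i_\bbM}}\,\epb{i_\bbM}\to\reeim f$, and composing it with $\reim{\oo f}\,\alpha_\obM\simeq\alpha_\obN\,\reeim{\oo f}$ gives the first morphism of (ii) in general — this is just your self-cartesian chain with the middle isomorphism weakened to the canonical morphism. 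Likewise the morphism of (i) should be built directly from $\opb{i_\bbM}\,\epb f\simeq\oo f{}^!\,\opb{i_\bbN}$ (again \eqref{eq:i*!}) together with the adjunction morphism $\oo f{}^!\to\iota_\obM\,\oo f{}^!\,\alpha_\obN$, as in the paper, rather than be deduced from (ii). Finally, for the second morphism of (ii) in the self-cartesian case, what you need is not proper base change but the statement of Lemma~\ref{lem:carsq}, which applies because $i_\bbN$ is an open embedding, hence borderly submersive.
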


\begin{proof}[Proof of Lemma~\ref{lem:alphaop}]
(i-a)
By Lemma~\ref{lem:iab}~(ii) and \eqref{eq:table}, one has
$\oo f{}^{-1} \, \alpha_{\bbN} \simeq \oo f{}^{-1} \, \alpha_{\obN}\,\opb i_{\bbN} \simeq  \alpha_{\obM} \,\oo f{}^{-1} \,\opb i_{\bbN} \simeq  \alpha_{\obM}\, \opb i_{\bbM} \,\opb f \simeq \alpha_{\bbM} \, \opb f$'

\smallskip\noindent(i-b)
By Lemma~\ref{lem:iab}~(ii), the morphism is given by the composition
\[
\alpha_\obM \,\opb i_\bbM \,\epb f \isoto[(*)]
\alpha_\obM \, {\oo f}{}^{\;!} \, \opb i_\bbN \To[(**)]  {\oo f}{}^{\;!}\, \alpha_\obN \,\opb i_\bbN .
\]
Here, $(*)$ follows from \eqref{eq:i*!}, and
$(**)$ follows by adjunction from $\oo f{}^! \to \oo f{}^! \,\iota_\obN \,\alpha_\obN \simeq \iota_\obM \,\oo f{}^! \,\alpha_\obN$, with the isomorphism due to \eqref{eq:table}.

If $f$ is  borderly  submersive, $(**)$ is an isomorphism by \eqref{st:tsepb}.

\smallskip\noindent(ii-a)
 By Lemma~\ref{lem:iab}~(ii), the morphism is given by
\[
\reim{\oo f}\,\alpha_\obM \,\opb i_\bbM \simeq \alpha_\obN \,\reeim{\oo f}\, \opb i_\bbM \To[(*)] \alpha_\obN \,\opb i_\bbN \,\reeim f.
\]
Here $(*)$ follows by adjunction from $\reeim{{i_\bbN}} \, \reeim{\oo f}\,  \epb{i_\bbM} \simeq \reeim f\, \reeim{{i_\bbM}} \epb{i_\bbM} \to \reeim f$, recalling \eqref{eq:i*!}.

If $f$ is self-cartesian, this is an isomorphism by cartesianity.

\smallskip\noindent(ii-b) By Lemma~\ref{lem:iab}~(ii) and \eqref{eq:table}, the morphism is given by the composition
\[
\alpha_\obN \,\opb i_\bbN \,\roim f \to[(*)] \alpha_\obN\, \roim{\oo f}\, \opb i_\bbM \simeq \roim{\oo f}\,\alpha_\obM \,\opb i_\bbM.
\]
Here $(*)$ follows from  Lemma~\ref{lem:carsq}.

Recall \eqref{eq:i*!}. If $f$ is self-cartesian, then $(*)$ is an isomorphism by cartesianity.

\smallskip\noindent(iii) follows from $\alpha_\bbM \simeq \alpha_\obM \, \opb i_\bbM$ and \eqref{eq:table}.
\end{proof}

\subsection{Enhanced ind-sheaves}
Denote by $t\in\R$ the coordinate on the affine line, consider the two-point compactification $\overline\R\defeq \R\cup\st{-\infty,+\infty}$, and set $\bR\defeq(\R,\overline\R)$.
For $\bbM$ a bordered space, consider the projection 
\[
\pi_\bbM\colon \bbM\times\bR\to \bbM.
\]
Denote by $\BEC \bbM\defeq\BDC(\ifield_{\bbM\times\bR})/\opb\pi_\bbM\BDC(\ifield_\bbM)$ the bounded derived category of enhanced ind-sheaves of $\field$-vector spaces on $\bbM$. Denote by $\quot\colon\BDC(\ifield_{\bbM\times\bR})\to\BEC\bbM$ the quotient functor,
and by $\LE$ and $\RE$ its left and right adjoint, respectively.
They are both fully faithful.

For $f\colon \bbM\to \bbN$ a morphism of bordered spaces, set
\[
f_\R \defeq f\times\id_{\bR}\colon \bbM\times\bR\to \bbN\times\bR.
\]
Denote by $\ctens$, $\Eopb f$, $\Eeeim f$ and $\cihom$, $\Eoim f$, $\Eepb f$ the six operations for enhanced ind-sheaves.  
Recall that $\ctens$ is the additive convolution in the $t$ variable, and that the external operations are induced via $\quot$ by the corresponding operations for ind-sheaves, with respect to the morphism $f_\R$.
Denote by $\cetens$ the exterior tensor and by $\Edual$ the Verdier dual.

We have
\eq
&&\LE \, \quot (F)\simeq (\field_{\st{t\ge0}}\oplus
\field_{\st{t\le0}})\ctens F\qtq\\
&&\RE \, \quot (F)\simeq \cihom(\field_{\st{t\ge0}}\oplus
\field_{\st{t\le0}},F).
\eneq

The functors $\fihom$ and $\fhom$, taking values in $\BDC(\ifield_\bbM)$ and $\BDC(\field_\obM)$, respectively, are defined by
\begin{align} 
\label{eq:fihom}
\fihom(K_1, K_2) & \defeq \roim{{\pi_\bbM}}\rihom(F_1,\RE K_2) \\ \notag
& \ \simeq \roim{{\pi_\bbM}}\rihom(\LE K_1,F_2), \\
\label{eq:fhom}
\fhom(K_1, K_2) & \defeq \alpha_\bbM\fihom(K_1, K_2),
\end{align}
for $K_i\in\BEC\bbM$ and $F_i\in\BDC(\ifield_{\bbM\times\bR})$ such that $K_i=\quot F_i$ ($i=1,2$).

There is a natural decomposition $\BEC \bbM \simeq \BECp \bbM \dsum \BECm \bbM$,
given by $K\mapsto(\quot\,\field_{\st{t\geq 0}}\ctens K)\dsum(\quot\,\field_{\st{t\leq 0}}\ctens K)$.

There are embeddings
\[
\epsilon^\pm_\bbM\colon\BDC(\ifield_\bbM) \rightarrowtail \BECpm\bbM, \quad
F\mapsto \quot(\field_{\st{\pm t\geq 0}}\tens\opb{\pi_\bbM}F),
\]
and one sets $\epsilon_\bbM(F)\defeq\epsilon_\bbM^+(F)\dsum\epsilon_\bbM^-(F)\in\BEC\bbM$.
Note that $\epsilon_\bbM(F)\simeq\quot(\field_{\st{t= 0}}\tens\opb{\pi_\bbM}F)$.

\subsection{Stable objects}\label{sse:stable}
Let $\bbM$ be a bordered space.
Set
\begin{align*}
\field_{\{t\gg0\}} &\defeq \indlim[a\rightarrow+\infty]\field_{\{t\geq a\}} \in\BDC(\ifield_{\bbM\times\bR}), \\
\Efield_\bbM &\defeq \quot\field_{\{t\gg0\}} \in \BECp\bbM.
\end{align*}
An object $K\in\BECp\bbM$ is called \emph{stable} if $\Efield_\bbM\ctens K \isoto  K$.
We denote by $\BECs\bbM$ the full subcategory of $\BECp\bbM$ of stable objects.
The embedding $\BECs\bbM \rightarrowtail \BECp\bbM$ has a left adjoint $\Efield_\bbM\ctens\ast$, as well as a right adjoint $\cihom(\Efield_\bbM,\ast)$.

There is an embedding
\[
e_\bbM\colon \BDC(\ifield_\bbM) \rightarrowtail \BECs\bbM, \quad F\mapsto \Efield_\bbM \ctens \epsilon_\bbM(F)
\simeq \quot(\field_{\{t\gg0\}}\tens\opb{\pi_\bbM} F).
\]

\begin{notation}\label{no:ex}
Let $S\subset T$ be locally closed subsets of $\bbM$.
\begin{itemize}
\item[(i)]
For continuous  maps $\varphi_\pm\colon  T\to\overline\R$ such that
$-\infty\leq \varphi_-\le\varphi_+<+\infty$, set
\begin{align*}
\ex_{S|\bbM}^{\range{\varphi_+}{\varphi_-}} &\seteq \quot\,\field_{\st{x\in S,\  -\varphi_+(x) \leq t < -\varphi_-(x)}} \quad \in \BECp \bbM, \\ \notag
\Ex_{S|\bbM}^{\range{\varphi_+}{\varphi_-}} &\defeq \Efield_\bbM \ctens \ex_{S|\obM}^{\range{\varphi_+}{\varphi_-}}  \quad \in  \BECs \bbM ,
\end{align*}
where we write for short
\[ 
\begin{split}
\{x\in S,&\ -\varphi_+(x) \leq t < -\varphi_-(x)\} \\ 
&\defeq \st{(x,t)\in \obM\times\R\semicolon  x\in S,\ -\varphi_+(x) \leq t < -\varphi_-(x)},
\end{split}
\]
with $<$ the total order on $\cR$.  If $S=T$, we also write for short
\[
\{-\varphi_+(x) \leq t < -\varphi_-(x)\} \defeq
\{x\in T,\ -\varphi_+(x) \leq t < -\varphi_-(x)\}.
\]
\item[(ii)]
For a continuous map $\varphi\colon  T\to\R$,
consider the object of $\BECp \bbM$
\begin{align*}
\ex_{S|\bbM }^\varphi &\seteq \quot\field_{\st{ x\in S,\  t+\varphi(x)\geq 0}}
\quad \in \BECp \bbM, \\ \notag
\Ex_{S|\bbM}^\varphi &\defeq \Efield_\bbM \ctens \ex_{S|\obM}^\varphi
\quad \in \BECs\bbM.
\end{align*}
where we write for short 
\[
\st{ x\in S,\  t+\varphi(x)\geq 0} = \st{(x,t)\in \obM\times\R\semicolon  x\in S,\  t+\varphi(x)\geq 0}.
\]
 If $S=T$, we also write for short
\[
\st{t+\varphi(x)\geq 0} \defeq
\st{x\in T,\ t+\varphi(x)\geq 0}.
\]
\end{itemize}
\end{notation}

Note that one has
$\ex_{S| \bbM}^\varphi\simeq
\ex_{S|\bbM}^{\range{\varphi}{-\infty}} $, and that there is a short exact sequence
\[
0\to \ex_{S|\bbM}^{\range{\varphi_+}{\varphi_-}} \to \ex_{S| \bbM}^{\varphi_+} \to \ex_{S|\bbM}^{\varphi_-} \to 0
\]
in the heart of $\BEC \bbM$ for the natural $t$-structure.

\subsection{Constructible objects}
A subanalytic bordered space is a bordered space $\bbM$ such that $\obM$ is an open subanalytic subset of the subanalytic space $\cbM$. A morphism $f\colon\bbM\to\bbN$ of subanalytic bordered spaces is a morphism of bordered spaces such that $\Gamma_{\oo f}$ is subanalytic in $\cbM\times\cbN$.
By definition, a subset $Z$ of $\bbM$ is subanalytic if it is subanalytic in $\cbM$. 

\smallskip
Let $\bbM$ be a subanalytic bordered space.

Denote by $\BDC_\wRc(\field_\bbM)$ the full subcategory of $\BDC(\field_\obM)$ whose objects $F$ are such that $\roim{{\inc}} F$ (or equivalently, $\reim{{\inc}} F$) is weakly $\R$-constructible, for $\inc\colon \obM\to\cbM$ the embedding.  
We similarly define the category $\BDC_\Rc(\field_\bbM)$ of $\R$-constructible sheaves.

Denote by
$\BECwRc\bbM$ the strictly full subcategory of $\BEC\bbM$ whose objects $K$
are such that for any relatively compact open subanalytic subset $U$ of $\bbM$,
one has
\[
\opb{\pi_\bbM}\field_U\tens K \simeq e_\bbM F
\]
for some $F\in \BDC_\wRc(\field_{\bbM\times\bR})$.
In particular, $K$ belongs to $\BECs\bbM$. 
We similarly define the category $\BECRc\bbM$ of $\R$-constructible enhanced ind-sheaves.

\section{Sheafification}\label{se:sh}

In this section, we discuss what we call here ind-sheafification and sheafification functor, and prove some of their functorial properties. Concerning constructibility, we use a fundamental result from \cite[\S6]{KS16D}.

\subsection{Associated ind-sheaf}

Let $\bbM$ be a bordered space.
Let $i_0\colon\bbM\to\bbM\times\bR$ be the embedding $x\mapsto(x,0)$.

\begin{definition}
Let $K\in\BEC \bbM$ and take
 $F\in\BDC(\ifield_{\bbM\times\bR})$ such that $K\simeq\quot F$.
We set
\eqn
\Ish_\bbM(K) &&\defeq \fihom(\quot\,\field_{\st{t=0}},K) \\
&&\simeq \roim{{\pi_\bbM}}\rihom(\field_{\st{t\ge0}}\oplus\field_{\st{t\le0}}, F) \\
&&\simeq\roim{{\pi_\bbM}}\rihom(\field_{\st{t=0}}, \RE K) \\
&&\simeq \reeim{{\pi_\bbM}}\rihom(\field_{\st{t=0}}, \RE K) \\
&&\simeq \epb{i_0}\RE K \quad \in \BDC(\ifield_\bbM)
\eneqn
(see \cite[Lemma 4.5.16]{DK16}), and call it the \emph{associated ind-sheaf} (in the derived sense) to $K$ on $\bbM$.
We will write for short $\Ish=\Ish_\bbM$, if there is no fear of confusion.
\end{definition}

Note that one has
\begin{align*}
\Ish(K) &\simeq \fihom(\quot\,\field_{\st{t\geq0}},K)
&\text{for $K\in\BECp \bbM$,} \\
\Ish(K) &\simeq \fihom(\field_\bbM^\enh,K)
&\text{for $K\in\BECs \bbM$.}
\end{align*}

\begin{lemma}\label{lem:ishadj}
The following are pairs of adjoint functors
\begin{itemize}
\item[(i)]
$\xymatrix{
(\epsilon,\Ish)\colon  \BDC(\ifield_\bbM) \ar@<.5ex>[r]^-\epsilon & \BEC \bbM  \ar@<.5ex>[l]^-\Ish},$
\item[(ii)]
$\xymatrix{( \epsilon^+,\Ish)\colon  \BDC(\ifield_\bbM) \ar@{ >->}@<.5ex>[r]^-{\epsilon^+} & \BECp \bbM  \ar@<.5ex>[l]^-\Ish},$
\item[(iii)]
$\xymatrix{(e,\Ish)\colon  \BDC(\ifield_\bbM) \ar@{ >->}@<.5ex>[r]^-e & \BECs \bbM  \ar@<.5ex>[l]^-\Ish}.$
\end{itemize}
\end{lemma}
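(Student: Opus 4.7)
The strategy is to derive all three adjunctions from a single standard adjunction chain, using the fact that $\quot$ has a right adjoint $\RE$ (both are fully faithful, but what matters here is just that they are adjoint), combined with the usual $\tens$--$\rihom$ and $\opb{\pi_\bbM}$--$\roim{\pi_\bbM}$ adjunctions in $\BDC(\ifield_{\bbM\times\bR})$.

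For (i), let $F\in\BDC(\ifield_\bbM)$ and $K\in\BEC\bbM$, and represent $K\simeq\quot G$ for some $G\in\BDC(\ifield_{\bbM\times\bR})$. By definition $\epsilon_\bbM(F)=\quot(\field_{\st{t=0}}\tens\opb{\pi_\bbM} F)$, so the chain
\begin{align*}
\Hom_{\BEC\bbM}(\epsilon_\bbM F, K)
& \simeq \Hom_{\BDC(\ifield_{\bbM\times\bR})}(\field_{\st{t=0}}\tens\opb{\pi_\bbM} F,\RE K) \\
& \simeq \Hom_{\BDC(\ifield_{\bbM\times\bR})}(\opb{\pi_\bbM} F,\rihom(\field_{\st{t=0}},\RE K)) \\
& \simeq \Hom_{\BDC(\ifield_\bbM)}(F,\roim{\pi_\bbM}\rihom(\field_{\st{t=0}},\RE K)) \\
& \simeq \Hom_{\BDC(\ifield_\bbM)}(F,\Ish_\bbM K)
\end{align*}
uses the $(\quot,\RE)$-adjunction in the first step, tensor--hom adjunction in the second, $(\opb{\pi_\bbM},\roim{\pi_\bbM})$-adjunction in the third, and the last displayed formula for $\Ish_\bbM$ in the definition.

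For (ii) and (iii), I would run exactly the same computation but replace $\field_{\st{t=0}}$ by $\field_{\st{t\ge0}}$ and $\field_{\st{t\gg0}}$ respectively, and then invoke the alternative expressions $\Ish(K)\simeq\fihom(\quot\field_{\st{t\ge0}},K)$ for $K\in\BECp\bbM$, resp.\ $\Ish(K)\simeq\fihom(\Efield_\bbM,K)$ for $K\in\BECs\bbM$, which are listed just after the definition of $\Ish$. In (ii) one uses that $\BECp\bbM$ is a direct summand of $\BEC\bbM$, so that $\Hom_{\BECp}(\epsilon^+ F,K)=\Hom_{\BEC}(\epsilon^+ F,K)$; in (iii) one uses that $\BECs\bbM\hookrightarrow\BECp\bbM$ is fully faithful, so that for $K\in\BECs\bbM$ one has $\Hom_{\BECs}(e_\bbM F,K)=\Hom_{\BECp}(e_\bbM F,K)$, and then repeats the argument with $\field_{\st{t\gg0}}$ in place of $\field_{\st{t=0}}$.

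There is no real obstacle; the main point is bookkeeping, namely verifying that the already recorded identifications of $\Ish$ on the subcategories $\BECp\bbM$ and $\BECs\bbM$ are the ones that match the counits of the desired adjunctions. Strictly speaking one should also check functoriality of the bijections in $F$ and $K$, but this is automatic since each step in the chain is a standard natural isomorphism. The fully faithfulness of $\epsilon^+_\bbM$ and $e_\bbM$ (marked by $\rightarrowtail$ in the statement) is then an immediate consequence of the fact that $\Ish\,\epsilon^+\simeq\id$ and $\Ish\,e\simeq\id$, which in turn follow from the explicit computations $\roim{\pi_\bbM}\rihom(\field_{\st{t\ge0}},\field_{\st{t\ge0}}\tens\opb{\pi_\bbM}F)\simeq F$ and $\roim{\pi_\bbM}\rihom(\field_{\st{t\gg0}},\field_{\st{t\gg0}}\tens\opb{\pi_\bbM}F)\simeq F$.
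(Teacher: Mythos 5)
Your proposal is correct, and part (i) is exactly the paper's own computation: the chain $\quot\dashv\RE$, tensor--$\rihom$, and $\opb{\pi_\bbM}\dashv\roim{\pi_\bbM}$ applied to $\field_{\st{t=0}}\tens\opb{\pi_\bbM}F$. For (ii) and (iii) the paper instead deduces the adjunctions from (i) by composing with the adjoint pairs $(\ast\ctens\quot\field_{\st{t\geq0}},\iota)$ and $(\ast\ctens\Efield_\bbM,\iota)$ given by the projections onto $\BECp\bbM$ and $\BECs\bbM$, whereas you rerun the same Hom-chain with the kernels $\field_{\st{t\geq0}}$ and $\field_{\st{t\gg0}}$ and invoke the recorded identifications $\Ish(K)\simeq\fihom(\quot\,\field_{\st{t\geq0}},K)$ on $\BECp\bbM$ and $\Ish(K)\simeq\fihom(\Efield_\bbM,K)$ on $\BECs\bbM$; this is an equally valid, essentially equivalent bookkeeping route resting on the same underlying facts, so there is no gap.
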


\begin{proof}
(i)
For $F\in\BDC(\ifield_\bbM)$ and $K\in\BEC \bbM$ one has
\begin{align*}
\Hom[\BEC \bbM](\epsilon(F),K)
&\simeq \Hom[\BDC(\ifield_{\bbM\times\bR})](\opb\pi F\tens\field_{\st{t=0}},\RE K) \\
&\simeq \Hom[\BDC(\ifield_\bbM)](F,\roim\pi\rihom(\field_{\st{t=0}},\RE K)) \\
&\simeq \Hom[\BDC(\ifield_\bbM)](F,\Ish(K)).
\end{align*}

\smallskip\noindent(ii) and (iii) follow from (i), noticing that there are pairs of adjoint functors $(\ast\ctens\quot\field_{\st{t\geq0}},\iota)$ and $(\ast\ctens\field_\bbM^\enh,\iota)$:
\[
\xymatrix@C=5em{
\BEC \bbM \ar@<.5ex>[r]^-{\ast\ctens\quot\field_{\st{t\geq0}}} & \BECp \bbM  \ar@<.5ex>@{ >->}[l]^\iota  \ar@<.5ex>[r]^-{\ast\ctens\field_\bbM^\enh} & \BECs \bbM  \ar@<.5ex>@{ >->}[l]^\iota.
}
\] 
Here we denote by $\iota$ the natural embeddings.
\end{proof}

\begin{lemma}\label{lem:ishop}
Let $f\colon \bbM\to \bbN$ be a morphism of bordered spaces.
\begin{itemize}
\item[(i)] There are a natural morphism and a natural isomorphism of functors
\[
\opb f \, \Ish_{\bbN} \to \Ish_{\bbM} \, \Eopb f, \quad
\epb f \, \Ish_{\bbN} \simeq \Ish_{\bbM} \, \Eepb f,
\]
and the above morphism is an isomorphism if $f$ is borderly submersive.
\item[(ii)] There are a natural morphism and a natural isomorphism of functors
\[
\reeim f \, \Ish_{\bbM} \to \Ish_{\bbN} \, \Eeeim f, \quad
\roim f \, \Ish_{\bbM} \simeq \Ish_{\bbN} \, \Eoim f,
\]
and the above morphism is an isomorphism if $f$ is proper.
\item[(iii)] For $K\in\BEC\bbM$ and $L\in\BEC\bbN$,
there is a natural morphism
\[
\Ish(K)\etens\Ish(L)\to\Ish(K\cetens L).
\]
\end{itemize}
\end{lemma}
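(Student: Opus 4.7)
My plan is to combine two tools: the adjunction pairs of Lemma~\ref{lem:ishadj}, which by standard mate construction yield the natural morphisms, and the explicit formula $\Ish_\bbM(K)\simeq\epb{i_0^\bbM}\RE K$ (with $i_0^\bbM\cl\bbM\to\bbM\times\bR$ the embedding at height zero), which together with base change along the cartesian square $f_\R\circ i_0^\bbM = i_0^\bbN\circ f$ and the compatibilities of $\RE$ with the enhanced operations yield the isomorphism statements. Note that $i_0^\bbM$ is a smooth proper closed embedding, with $\epb{i_0^\bbM}\field\simeq\field_\bbM[-1]$.

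For the natural morphisms, I first establish compatibilities of $\epsilon$ with enhanced operations by direct computation from $\epsilon_\bbM F\simeq\quot(\field_{\{t=0\}}\tens\opb{\pi_\bbM}F)$, exploiting $\opb{f_\R}\field_{\{t=0\}}\simeq\field_{\{t=0\}}$ (as $f_\R=f\times\id_\bR$) and the projection formula:
\[
\Eopb f\,\epsilon_\bbN\simeq\epsilon_\bbM\opb f,\quad
\Eeeim f\,\epsilon_\bbM\simeq\epsilon_\bbN\reeim f,\quad
\epsilon_{\bbM\times\bbN}(F\etens G)\simeq\epsilon_\bbM F\cetens\epsilon_\bbN G,
\]
the last using $\reeim s\,\field_{\{t_1=t_2=0\}}\simeq\field_{\{t=0\}}$ for the addition $s\cl\bR\times\bR\to\bR$. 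Transferring through the adjunction $(\epsilon,\Ish)$, and composing with the counit $\epsilon\,\Ish\to\id$ where appropriate (in particular for (iii), reducing to $\epsilon(\Ish K)\cetens\epsilon(\Ish L)\to K\cetens L$), produces the natural morphisms in (i), (ii), and (iii).

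For the isomorphism statements, I use $\Ish\simeq\epb{i_0}\RE$ together with the compatibilities $\RE\,\Eepb f\simeq\epb{f_\R}\RE$ and $\RE\,\Eoim f\simeq\roim{f_\R}\RE$, both immediate from $\RE\,\quot F\simeq\rihom(\field_{\{t\geq 0\}}\oplus\field_{\{t\leq 0\}},F)$ and standard commutations of $\rihom$ with $\epb{}$ and $\roim{}$. The second isomorphism of (i) then follows from $\epb{i_0^\bbM}\epb{f_\R}\simeq\epb{f_\R\circ i_0^\bbM}\simeq\epb f\,\epb{i_0^\bbN}$; the second isomorphism of (ii) from the proper base change $\epb{i_0^\bbN}\,\roim{f_\R}\simeq\roim f\,\epb{i_0^\bbM}$, valid since $i_0^\bbN$ is proper. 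For the conditional isomorphism in (i), borderly submersiveness of $f_\R$ combined with \eqref{eq:tsepb} and the general identity $\epb{f_\R}\rihom(F_1,F_2)\simeq\rihom(\opb{f_\R}F_1,\epb{f_\R}F_2)$ yields $\RE\,\Eopb f\simeq\opb{f_\R}\RE$ after canceling the invertible twist $\epb{f_\R}\field_{\obN\times\R}$; combined with the smooth-closed-embedding identity $\epb{i_0^\bbM}(-)\simeq\opb{i_0^\bbM}(-)\tens\epb{i_0^\bbM}\field$ and $\opb f\,\epb{i_0^\bbN}\field\simeq\epb{i_0^\bbM}\field$ (both equal to $\field_\bbM[-1]$), one deduces $\opb f\,\Ish_\bbN\simeq\Ish_\bbM\,\Eopb f$. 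For the conditional isomorphism in (ii), properness of $f$ gives $\reeim f\simeq\roim f$ and $\Eeeim f\simeq\Eoim f$, reducing to the second isomorphism.

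The main technical hurdle is the commutation of $\opb{f_\R}$ with $\rihom(\field_{\{t\geq 0\}},-)$ needed for $\RE\,\Eopb f\simeq\opb{f_\R}\RE$: this is precisely where the borderly submersiveness hypothesis enters, via invertibility of $\epb{f_\R}\field_{\obN\times\R}$ together with the always-valid formula $\epb{f_\R}\rihom\simeq\rihom(\opb{f_\R}(-),\epb{f_\R}(-))$.
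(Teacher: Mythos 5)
Most of your plan is sound and, apart from packaging, close to the paper's: your mate construction from the adjunction $(\epsilon,\Ish)$ of Lemma~\ref{lem:ishadj}, based on the compatibilities $\Eopb f\,\epsilon_{\bbN}\simeq\epsilon_{\bbM}\,\opb f$, $\Eeeim f\,\epsilon_{\bbM}\simeq\epsilon_{\bbN}\,\reeim f$ and $\epsilon_{\bbM\times\bbN}(\dummy\etens\dummy)\simeq\epsilon_{\bbM}(\dummy)\cetens\epsilon_{\bbN}(\dummy)$, is a clean alternative to the paper's hands-on construction of the morphisms via $\roim{{\pi_\bbM}}\rihom(\field_{\st{t=0}},\dummy)$; and your proof of the two unconditional isomorphisms from $\Ish\simeq\epb{i_0}\RE$, the known identities $\RE\,\Eepb f\simeq\epb{f_\R}\RE$ and $\RE\,\Eoim f\simeq\roim{{f_\R}}\RE$, composition of $\epb{(\dummy)}$ and base change in the cartesian square $f_\R\comp i_0=i_0\comp f$ is the paper's argument in different notation, as is your reduction of the proper case of (ii).

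The genuine gap is in the borderly submersive case of (i). First, the ``invertible twist'' $\epb{f_\R}\field_{\obN\times\R}$ is not invertible in general: a borderly submersive morphism is only locally of the form $S_i\times\bbN\to\bbN$ with $S_i$ a subanalytic \emph{space}, so locally $\epb{f_\R}\field_{\obN\times\R}\simeq\omega_{S_i}\etens\field_{\obN\times\R}$ with $\omega_{S_i}$ the dualizing complex of a possibly singular (or boundary-carrying) space; there is nothing to cancel, and \eqref{eq:tsepb} by itself does not yield $\opb{f_\R}\rihom(F_1,F_2)\simeq\rihom(\opb{f_\R}F_1,\opb{f_\R}F_2)$. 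Second, the ``smooth-closed-embedding identity'' $\epb{i_0}(\dummy)\simeq\opb{i_0}(\dummy)\tens\epb{i_0}\field$ is false: such a formula holds for submersions, not closed embeddings (test it on $\field_{\st{t=0}}$, where the left-hand side is $\field_{\obM}$ and the right-hand side is $\field_{\obM}[-1]$). Third, $\RE\,\quot F$ is $\cihom(\field_{\st{t\ge0}}\oplus\field_{\st{t\le0}},F)$, not a plain $\rihom$, so commuting $\opb{f_\R}$ past $\RE$ also involves a direct image over the convolution variable. What is actually needed here -- and what the paper uses -- is the nontrivial commutation $\opb{f_\R}\rihom(\field_{\st{t=0}},G)\isoto\rihom(\field_{\st{t=0}},\opb{f_\R}G)$ for borderly submersive $f$ (the corrected first isomorphism of \cite[Proposition~3.3.19]{DK16}, see the footnote attached to \eqref{eq:tsepb}), together with Lemma~\ref{lem:A3}, i.e.\ $\opb{f_\R}\RE K\isoto\RE\,\Eopb f K$; the paper proves the latter not by untwisting but by showing $\roim{{\pi_\bbM}}\opb{f_\R}\RE K\simeq\opb f\roim{{\pi_\bbN}}\RE K\simeq0$ via Proposition~\ref{prop:extd} and then invoking \cite[Proposition~4.4.4~(ii-b)]{DK16}. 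As written, your proposal does not establish the conditional isomorphism in (i); the remaining parts stand.
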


\begin{proof}
Recall that one sets $f_\R \defeq f\times\id_\bR\colon \bbM\times\bR\to \bbN\times\bR$.

\smallskip\noindent(i) Let $L\in\BEC\bbN$ and set $G\defeq\RE L\in\BDC(\ifield_{\bbN\times\bR})$.

\smallskip\noindent(i-a) One has
\begin{align*}
\opb f \Ish_{\bbN}(L)
&\simeq \opb f \,\reeim{{\pi_\bbN}} \rihom(\field_{\st{t=0}},G) \\
&\simeq\reeim{{\pi_\bbM}} \,\opb f_\R \rihom(\field_{\st{t=0}},G) \\
&\underset{(*)}\to \roim{{\pi_\bbM}} \rihom(\field_{\st{t=0}},\opb f_\R G) \\
&\underset{(**)}\to \roim{{\pi_\bbM}} \rihom(\field_{\st{t=0}},\RE\Eopb f L) \\
&\simeq \Ish_{\bbM}(\Eopb f L).
\end{align*}
Here, $(*)$ follows from \cite[Proposition 3.3.13]{DK16},
and $(**)$ from Lemma~\ref{lem:A3}.

If $f$ is borderly submersive,
then $(*)$ is an isomorphism by \cite[Proposition 3.3.19]{DK16}
and $(**)$ is an isomorphism by Lemma~\ref{lem:A3}.

\smallskip\noindent(i-b) Recall that $\epb{f_\R}G \simeq \RE(\Eepb f L)$. One has
\begin{align*}
\epb f \Ish_{\bbN}(L)
&= \epb f \roim{{\pi_\bbN}} \rihom(\field_{\st{t=0}},G) \\
&\simeq \roim{{\pi_\bbM}}\, \epb{f_\R} \rihom(\field_{\st{t=0}},G) \\
&\simeq \roim{{\pi_\bbM}} \rihom(\field_{\st{t=0}},\epb{f_\R} G) \\
&\simeq \roim{{\pi_\bbM}} \rihom(\field_{\st{t=0}},\RE(\Eepb f L)) \\
&\simeq \Ish_{\bbM} (\Eepb f L).
\end{align*}

\smallskip\noindent(ii)
Let $K\in\BEC\bbM$ and set $F\defeq\RE K\in\BDC(\ifield_{\bbM\times\bR})$.

\smallskip\noindent(ii-a)
One has
\begin{align*}
\Ish_{\bbN}(\Eeeim f K)
&=\reeim{{\pi_\bbN}} \rihom(\field_{\st{t=0}}, \RE\,\Eeeim f K) \\
&\from \reeim{{\pi_\bbN}} \rihom(\field_{\st{t=0}}, \reeim{{f_\R}}F) \\
&\underset{(*)}\simeq \reeim{{\pi_\bbN}}\, \reeim{{f_\R}}\rihom(\field_{\st{t=0}}, F) \\
&\isofrom \reeim f\,\reeim{{\pi_\bbM}} \rihom(\field_{\st{t=0}}, F) \\
&= \reeim f (\Ish_{\bbM}(K)).
\end{align*}
Here $(*)$ follows from \cite[Lemma 5.2.8]{KS01}.

\smallskip\noindent(ii-b)
Since $\RE(\Eoim f K) \simeq \roim{{f_\R}}F$, one has
\begin{align*}
\Ish_{\bbN}(\Eoim f K)
&\simeq \roim{{\pi_\bbN}} \rihom(\field_{\st{t=0}}, \roim{{f_\R}}F) \\
&\simeq \roim{{\pi_\bbM}}\, \roim{{f_\R}}\rihom(\field_{\st{t=0}}, F) \\
&\simeq \roim f\,\roim{{\pi_\bbM}} \rihom(\field_{\st{t=0}}, F).
\end{align*}
If $f$ is proper, $\eim f\simeq\oim f$.

\smallskip\noindent(iii)
Set $F\defeq\RE K\in\BDC(\ifield_{\bbM\times\bR})$ and $G\defeq\RE L\in\BDC(\ifield_{\bbN\times\bR})$.
Recall that $F\cetens G \defeq \reeim m(F\etens G)$, where
\[
m\colon \bbM\times\bR \times\bbN\times\bR \to \bbM\times\bbN\times\bR \quad
(x,t_1,y,t_2)\mapsto (x,y,t_1+t_2).
\]
Then, one has
\begin{align*}
\Ish(K)&{}\etens\Ish(L)\\
&\simeq \roim{{\pi_\bbM}}\rihom(\field_{\st{t_1=0}}, F) \etens \roim{{\pi_\bbN}}\rihom(\field_{\st{t_2=0}}, G) \\
&\to \roim{(\pi_\bbM\times\pi_\bbN)}\bl\rihom(\field_{\st{t_1=0}}, F) \etens \rihom(\field_{\st{t_2=0}}, G)\br \\
&\to \roim{{\pi_{\bbM\times\bbN}}}\roim m\rihom(\field_{\st{t_1=0}}\etens\field_{\st{t_2=0}}, F\etens G) \\
&\to \roim{{\pi_{\bbM\times\bbN}}}\rihom\bl
\reeim m(\field_{\st{t_1=0}}\etens\field_{\st{t_2=0}}), \reeim m (F\etens G)\br \\
&\simeq \roim{{\pi_{\bbM\times\bbN}}}\rihom(\field_{\st{t=0}}, F\cetens G),
\end{align*}
One concludes using the natural morphism $F\cetens G \to \RE(K\cetens L)$.
\end{proof}

\medskip
\subsection{Associated sheaf}

Let $\bbM$ be a bordered space.

\begin{definition}
Let $K\in\BEC \bbM$. 
\begin{itemize}
\item[(i)]
We set
\begin{align*}
\sh_\bbM(K) &\defeq \fhom(\quot\,\field_{\st{t=0}},K) \\
&= \alpha_\bbM\,\Ish_\bbM(K) \quad \in \BDC(\field_\obM),
\end{align*}
and call it the \emph{associated sheaf} (in the derived sense) to $K$ on $\obM$.
We will write for short $\sh=\sh_\bbM$, if there is no fear of confusion.
\item[(ii)]
We say that $K$ is of \emph{sheaf type} (in the derived sense) if it is in the essential image of
\[
e_\bbM\,\iota_\bbM\colon \BDC(\field_\obM) \rightarrowtail\BEC\bbM,
\]
\end{itemize}
\end{definition}

One has
\begin{align*}
\sh_\bbM(K) &\simeq \fhom(\quot\,\field_{\st{t\geq0}},K),
&\text{for } K\in\BECp \bbM, \\
\sh_\bbM(K) &\simeq \fhom(\field^\enh_\bbM,K),
&\text{for } K\in\BECs \bbM.
\end{align*}

\begin{lemma}\label{lem:shi}
One has $\sh_\bbM\simeq \sh_\obM\, \Eopb i_\bbM$.
\end{lemma}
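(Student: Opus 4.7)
The plan is to chain together two results already established in the excerpt: the decomposition of $\alpha_\bbM$ through $\obM$ (Lemma~\ref{lem:iab}~(ii)) and the commutation of $\Ish$ with pull-backs along borderly submersive morphisms (Lemma~\ref{lem:ishop}~(i)).

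First I would unfold the definition $\sh_\bbM \defeq \alpha_\bbM \, \Ish_\bbM$ and substitute the isomorphism $\alpha_\bbM \simeq \alpha_\obM \, \opb{i_\bbM}$ from Lemma~\ref{lem:iab}~(ii), obtaining
\[
\sh_\bbM \simeq \alpha_\obM \, \opb{i_\bbM} \, \Ish_\bbM.
\]
Next I would observe that the canonical morphism $i_\bbM \colon \obM \to \bbM$ is an open embedding (indeed, $\oo i_\bbM$ is the identity of $\obM$, viewed as an open subset of itself), and hence borderly submersive: one can take the trivial covering with $S_i$ a point. Thus Lemma~\ref{lem:ishop}~(i) applies and gives an isomorphism
\[
\opb{i_\bbM} \, \Ish_\bbM \simeq \Ish_\obM \, \Eopb{i_\bbM}.
\]
Plugging this into the previous display yields $\sh_\bbM \simeq \alpha_\obM \, \Ish_\obM \, \Eopb{i_\bbM} = \sh_\obM \, \Eopb{i_\bbM}$, as desired.

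There is no real obstacle here; the only point to check is that $i_\bbM$ is borderly submersive, and this is immediate from the definition since it is an open embedding. The proof amounts to assembling two lemmas proved earlier in Section~\ref{se:not} and Section~\ref{se:sh}.
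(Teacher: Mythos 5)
Your proof is correct and follows essentially the same route as the paper: both decompose $\alpha_\bbM\simeq\alpha_\obM\,\opb{i_\bbM}$ via Lemma~\ref{lem:iab}~(ii) and then commute the pullback along $i_\bbM$ past $\Ish_\bbM$. The only difference is cosmetic: the paper re-derives that commutation by a direct computation (recalling $\opb{i_\bbM}\simeq\epb{i_\bbM}$ and unfolding $\fihom$, in effect the unconditional $!$-type isomorphism of Lemma~\ref{lem:ishop}~(i)), whereas you invoke the $*$-type isomorphism of Lemma~\ref{lem:ishop}~(i) after correctly observing that $i_\bbM$ is an open embedding and hence borderly submersive.
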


\begin{proof}
Recall that $\opb {i_\bbM}\simeq\epb {i_\bbM}$. Using Lemma~\ref{lem:iab}~(ii), one has
\begin{align*}
\alpha_\bbM\,\Ish_\bbM
&\simeq \alpha_\obM\,\epb{i_\bbM} \fihom(\quot\,\field_{\st{t=0}},K) \\
&= \alpha_\obM\,\epb{i_\bbM}\,\roim{{\pi_\bbM}}\rihom(\field_{\st{t=0}},\RE K) \\
&\simeq \alpha_\obM\,\roim{{\pi_\obM}}\,\epb{i_{\bbM\times\bR}} \rihom(\field_{\st{t=0}},\RE K) \\
&\simeq \alpha_\obM\,\roim{{\pi_\obM}}\rihom(\field_{\st{t=0}},\epb{i_{\bbM\times\bR}}\RE K) \\
&\simeq \alpha_\obM\,\roim{{\pi_\obM}}\rihom(\field_{\st{t=0}},\RE \,\Eepb i_\bbM K) \\
&\simeq \alpha_\obM\,\fihom(\quot\field_{\st{t=0}},\Eepb i_\bbM K) .
\end{align*}
\end{proof}

Let $\bbM$ be a bordered space, and consider the natural morphisms
of good spaces
\[
\obM\times\R\to[k] \obM\times\cR\to[\overline\pi] \obM.
\]
 We write $\ct$ for points of $\cR\defeq \R\cup\st{-\infty,+\infty}$.

An important tool in this framework is given by 

\begin{proposition}[{\cite[Corollary 6.6.6]{KS16D}}] \label{pro:KScor}
Let $\bbM$ be a bordered space.
Then, for $F \in\BDC(\field_{\obM\times\R})$ one has 
\[
\sh_\bbM(\field_\bbM^\enh \ctens \quot F) 
\simeq \roim{\overline\pi}(\field_{\st{-\infty<\ct\leq+\infty}}\tens\roim k F).
\]
\end{proposition}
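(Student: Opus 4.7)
The plan is to unwind $\sh_\bbM$ on the stable object $K = \field_\bbM^\enh \ctens \quot F$ and reduce the computation to classical sheaves on the subanalytic compactification $\obM\times\cR$.

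First, by Lemma~\ref{lem:shi} I would reduce to $\bbM = \obM$, since the right-hand side involves only the good space $\obM$ and the classical sheaf $F$. Since $K$ is stable, $\sh_\bbM(K)\simeq\alpha_\bbM\,\fihom(\field_\bbM^\enh,K)$, and using $\field_\bbM^\enh=\quot\field_{\{t\gg 0\}}$ together with the integral representation \eqref{eq:fihom} of $\fihom$ (combined with $\field_{\{t\gg 0\}}\tens\field_{\{t\geq 0\}}\simeq\field_{\{t\gg 0\}}$ and $\field_{\{t\gg 0\}}\tens\field_{\{t\leq 0\}}\simeq 0$), one rewrites
\[
\sh_\bbM(K)\simeq\alpha_\bbM\,\roim{\pi_\bbM}\rihom\bl\field_{\{t\gg 0\}},\ \field_{\{t\gg 0\}}\ctens F\br.
\]
Next, I would commute $\alpha_\bbM$ past $\roim{\pi_\bbM}$ using Lemma~\ref{lem:alphaop}, reducing to the computation of $\alpha$ on the tempered $\rihom$ inside.

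The core step is to identify this $\alpha$-image on $\obM\times\R$ with $\opb k\bl\field_{\{-\infty<\ct\leq+\infty\}}\tens\roim k F\br$, so that pushing forward by $\pi_\obM = \overline\pi\circ k$ yields the claimed right-hand side. Since $\alpha\field_{\{t\gg 0\}}$ vanishes stalkwise on $\R$, one cannot simply distribute $\alpha$; rather, the tempered $\rihom$ with $\field_{\{t\gg 0\}}$ captures the ``germ at $+\infty$'' of $F$, and the natural way to see this is to pass to $\cR$ via $\roim k$. The characteristic sheaf $\field_{\{-\infty<\ct\leq+\infty\}}$ corresponds to the open complement of $\{-\infty\}$ in $\cR$, thereby including the boundary $+\infty$ while excluding $-\infty$, which matches the asymmetric role of $\field_{\{t\gg 0\}}$.

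The main obstacle is this final identification, which carries the genuine content of the proposition. While the reductions above are essentially formal, recognizing the $\alpha$-image of a tempered ind-sheaf built from $\field_{\{t\gg 0\}}$ and $F$ as a classical sheaf on $\obM\times\cR$ requires the subanalytic-topology machinery developed in \cite{KS16D}. In particular, one must control ind-limits of subanalytic locally closed neighborhoods of $\{+\infty\}$ in $\cR$, and invoke the general computation culminating in \cite[Corollary 6.6.6]{KS16D}. The proof plan thus ultimately reduces to invoking that cited result.
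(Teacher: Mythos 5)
The paper does not actually prove this proposition: it is quoted verbatim from \cite[Corollary~6.6.6]{KS16D}, and since your plan explicitly bottoms out in invoking that same corollary, in substance your argument coincides with the paper's (the preliminary reduction via Lemma~\ref{lem:shi} is fine but also unnecessary, as the cited corollary is already stated for bordered spaces).

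That said, the reductions you sketch on the way are not sound, and you should discard them rather than record them as routine. First, Lemma~\ref{lem:alphaop}~(ii) does not let you commute $\alpha_\bbM$ past $\roim{{\pi_\bbM}}$: the projection $\pi_\bbM\colon\bbM\times\bR\to\bbM$ is neither proper nor self-cartesian (the fibre product of $\bbM\times\bR$ with $\obM$ over $\bbM$ is $\obM\times\bR$, not the good space $\obM\times\R$), so the lemma only provides a morphism. In fact the commutation fails drastically: every point of $\obM\times\R$ has a cofinal system of neighborhoods $U$ bounded in $t$, for which $\field_U\tens\field_{\st{t\gg0}}\simeq0$, whence $\alpha_{\bbM\times\bR}\rihom\bl\field_{\st{t\gg0}},G\br\simeq0$ for \emph{every} $G$; so your reduction ``to the computation of $\alpha$ on the tempered $\rihom$ inside'' would force $\sh_\bbM(K)\simeq0$ for all stable $K$, contradicting Proposition~\ref{pro:sheid}~(i). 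Second, the ``core step'' is also broken: since $\opb k\bl\field_{\st{-\infty<\ct\leq+\infty}}\tens\roim k F\br\simeq F$, your proposed identification would equate the (zero) $\alpha$-image with $F$, and even granting some identification on $\obM\times\R$, pushing $F$ forward by $\pi_\obM=\overline\pi\circ k$ gives $\roim{\overline\pi}\,\roim k F$, which differs from the right-hand side exactly by the stalks at $\ct=-\infty$; e.g.\ for $\obM=\mathrm{pt}$, $F=\field_\R$ one has $\quot F\simeq0$, so both sides of the proposition vanish, while $\roim{{\pi_{\obM}}}F\simeq\field$. The correct object on $\obM\times\R$ to push forward is $\LEp\quot F$, as in Corollary~\ref{cor:shother}, but in the paper that formula is \emph{deduced from} the present proposition, not a route to it. Since you ultimately defer the whole content to \cite[Corollary~6.6.6]{KS16D}, your conclusion stands exactly to the extent the paper's does; but the sketch preceding the citation would not survive being carried out.
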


Denote by $i_{\pm\infty}\colon M\to M\times\cR$ the embeddings $x\mapsto(x,\pm\infty)$.
Using the above proposition and \cite[Proposition 4.3.10, Lemma 4.3.13]{DK16}, we get

\begin{corollary}\label{cor:shother}
Let $\bbM$ be a bordered space.
Then, for $F \in\BDC(\field_{\obM\times\R})$ one has
\begin{align*}
\sh_\bbM(\field_\bbM^\enh \ctens \quot F) 
&\simeq \opb i_{+\infty} \roim j\LEp \quot F\\ 
&\simeq \opb i_{-\infty} \roim j\REp \quot F[-1] \\
&\simeq \roim\pi\LEp \quot F\\ 
&\simeq \reim\pi\REp \quot F.
\end{align*}
\end{corollary}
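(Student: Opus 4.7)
The plan is to derive all four isomorphisms from the formula
\[
\sh_\bbM(\field_\bbM^\enh \ctens \quot F)
\simeq \roim{\overline\pi}(\field_{\st{-\infty<\ct\leq+\infty}}\tens\roim k F)
\]
supplied by Proposition~\ref{pro:KScor}, by matching the right-hand side with suitable direct images of $\LEp\quot F$ and $\REp\quot F$. The inputs are \cite[Proposition 4.3.10]{DK16}, which describes $\LEp$ and $\REp$ concretely in terms of tensor (respectively, internal Hom) with $\field_{\{t\geq 0\}}$, and \cite[Lemma 4.3.13]{DK16}, which expresses the push-forwards $\roim\pi$ and $\reim\pi$ along the bordered projection $\pi\colon \bbM\times\bR\to\bbM$ as stalks at $\pm\infty$ of push-forwards to the compactification $\obM\times\cR$.

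For the first isomorphism, I would use \cite[Proposition 4.3.10]{DK16} to rewrite $\roim j\LEp\quot F$ as (up to the natural $\alpha$--$\iota$ identifications) the sheaf $\field_{\st{-\infty<\ct\leq+\infty}}\tens\roim k F$ on $\obM\times\cR$: the factor $\field_{\st{-\infty<\ct\leq+\infty}}$ encodes both the half-line support built into $\LEp$ (after stabilization by $\field^\enh_\bbM$) and the extension across the boundary $\ct=+\infty$ produced by $\roim j$. Applying $\roim{\overline\pi}$ to such a sheaf collapses to taking its stalk at $+\infty$, which is $\opb i_{+\infty}$, and this gives the first formula. The second isomorphism then follows by the dual argument applied to $\REp$, where the relevant boundary behaviour sits at $\ct=-\infty$ and the shift $[-1]$ arises from the distinguished triangle relating $\field_{\{t\geq 0\}}$, $\field_{\{t>0\}}$ and $\field_{\{t=0\}}$ when comparing the tensor and internal Hom descriptions of $\LEp$ and $\REp$.

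The last two formulas follow immediately from \cite[Lemma 4.3.13]{DK16}, which identifies $\roim\pi$ with $\opb i_{+\infty}\roim j$ on objects supported away from $\ct=-\infty$ (such as $\LEp\quot F$), and $\reim\pi$ with $\opb i_{-\infty}\roim j$ on objects supported away from $\ct=+\infty$ (such as $\REp\quot F$). The main obstacle will be the careful bookkeeping needed to verify that $\LEp\quot F$ and $\REp\quot F$ indeed satisfy the appropriate vanishing at the opposite end after push-forward to $\obM\times\cR$, and to pin down the shift $[-1]$ in the $\REp$ case in a way consistent with both derivations.
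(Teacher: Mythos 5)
Your overall route coincides with the paper's: the proof given there is nothing more than ``combine Proposition~\ref{pro:KScor} with \cite[Proposition 4.3.10, Lemma 4.3.13]{DK16}''. The problem is with the concrete bridge you propose for the first two formulas. You claim that $\roim j\LEp \quot F$ can be rewritten as the sheaf $\field_{\st{-\infty<\ct\leq+\infty}}\tens\roim k F$ on $\obM\times\cR$; this is false. Take $\bbM=\mathrm{pt}$ and $F=\field_{\st{t=0}}$: then $\LEp\quot F\simeq\field_{\st{t\geq0}}$, so $\roim j\LEp\quot F\simeq\field_{\st{0\leq\ct\leq+\infty}}$, whereas $\field_{\st{-\infty<\ct\leq+\infty}}\tens\roim k F\simeq\field_{\st{\ct=0}}$, and these are not isomorphic. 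Your companion claim, that applying $\roim{\overline\pi}$ to $\field_{\st{-\infty<\ct\leq+\infty}}\tens\roim k F$ ``collapses to the stalk at $+\infty$'', fails on the same example: $\roim{\overline\pi}\field_{\st{\ct=0}}\simeq\field$, while the stalk at $+\infty$ vanishes. The two errors happen to compensate in the final answer ($\sh(\field^\enh_\bbM\ctens\quot F)\simeq\field$ here), but neither step is valid, so the proposed derivation of the first isomorphism, and of the second ``by duality'', is not a proof. What actually has to be shown is that $\roim{\overline\pi}\bl\field_{\st{-\infty<\ct\leq+\infty}}\tens\roim k F\br$ and $\opb i_{+\infty}\roim k\bl\field_{\st{t\geq0}}\ctens F\br$ agree, and this rests on the structure of convolution with $\field_{\st{t\geq0}}$ (sections become stable towards $t=+\infty$), i.e.\ on the computations contained in the cited results of \cite{DK16}; it is not a formal consequence of matching the two intermediate sheaves on $\obM\times\cR$, since they differ.

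The justification you give for the last two formulas is also not the right mechanism. A statement of the form ``$\roim\pi$ agrees with $\opb i_{+\infty}\roim j$ on objects supported away from $\ct=-\infty$'' is false as such: for $G=\field_{\st{t=0}}$ one has $\roim j G\simeq\field_{\st{\ct=0}}$, supported away from $\ct=-\infty$, yet $\roim\pi G\simeq\field$ while $\opb i_{+\infty}\roim j G\simeq0$. Moreover $\LEp\quot F$ need not be supported away from $t=-\infty$ at all (take $F=\bigoplus_{n\geq0}\field_{\st{t=-n}}$). Again, the identities $\roim\pi\LEp\quot F\simeq\opb i_{+\infty}\roim j\LEp\quot F$ and $\reim\pi\REp\quot F\simeq\opb i_{-\infty}\roim j\REp\quot F[-1]$ hold because of the special shape of $\LEp\quot F$ and $\REp\quot F$ as (co)convolutions with $\field_{\st{t\geq0}}$, which is exactly the content of \cite[Proposition 4.3.10, Lemma 4.3.13]{DK16}. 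In short, you have identified the correct ingredients, the same ones the paper cites, but the way you propose to assemble them would not survive being written out; the heuristic identifications need to be replaced by the precise statements of those results (or by direct computations of the four right-hand sides, e.g.\ fiberwise over $\obM$), including a genuine argument for the shift $[-1]$ rather than an appeal to the triangle $\field_{\st{t>0}}\to\field_{\st{t\geq0}}\to\field_{\st{t=0}}\tone$.
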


Consider the functors 
\begin{equation}
\label{eq:she}
\xymatrix@C=4em{
\BDC(\field_\obM) \ar@{ >->}@<.5ex>[r]^-{e_\bbM\,\iota_\bbM} & \BEC \bbM  \ar@<.5ex>[l]^-{\sh_\bbM}.
}
\end{equation}
As explained in the Introduction, $(e_\bbM\,\iota_\bbM,\;\sh_\bbM)$ is \emph{not} an adjoint pair of functors in general.

\begin{proposition}\label{pro:sheid}
Consider the functors \eqref{eq:she}.
\begin{itemize}
\item[(i)]
$\sh_\bbM$ is a left quasi-inverse to $e_\bbM\,\iota_\bbM$.
\item[(ii)]
The property of being of sheaf type is local\,\footnote{ Saying that a property $\shp(\bbM)$ is local on $\bbM$ means the following. For any open covering $\st{ U_i}_{i\in I}$ of $\bbM$, $\shp(\bbM)$ is true if and only if $\shp\bl\inb{(U_i)}\br$ is true for any $i\in I$.}
on $\bbM$,
and $K\in\BEC \bbM$ is of sheaf type if and only if 
$K\simeq e_\bbM\,\iota_\bbM\bl\sh_\bbM(K)\br$.
\end{itemize}
\end{proposition}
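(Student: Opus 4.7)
The plan is to prove (i) directly via Corollary~\ref{cor:shother}, and to deduce both assertions of (ii) from (i) combined with the adjunction $(e_\bbM\,\iota_\bbM,\sh_\bbM)$ between $\BDC(\field_\obM)$ and $\BECs\bbM$ obtained by composing $(e_\bbM,\Ish_\bbM)$ of Lemma~\ref{lem:ishadj}(iii) with $(\alpha_\bbM,\iota_\bbM)$.

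For (i), given $F\in\BDC(\field_\obM)$, set $F_0\defeq\field_{\{t=0\}}\tens\opb{\pi_\bbM}F\in\BDC(\field_{\obM\times\R})$. Since $\iota$ commutes with tensor products and inverse images, unwinding the definition of $e$ identifies $e_\bbM\,\iota_\bbM F$ with $\field^\enh_\bbM\ctens\quot F_0$. Corollary~\ref{cor:shother} then yields $\sh_\bbM(e_\bbM\,\iota_\bbM F)\simeq\roim{\pi_\obM}\bl\field_{\{t\ge0\}}\tens\opb{\pi_\obM}F\br$, which equals $F$ by the projection formula and the contractibility of $[0,+\infty)$.

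For (ii), the first claim is immediate from (i): if $K\simeq e_\bbM\,\iota_\bbM G$ then $\sh_\bbM K\simeq G$ by (i), so $e_\bbM\,\iota_\bbM\,\sh_\bbM K\simeq K$; the converse is by definition. The easy direction of the locality claim (sheaf type implies locally of sheaf type) follows from the commutativity $\Eopb f\,e_\bbN\,\iota_\bbN\simeq e_\bbM\,\iota_\bbM\,\opb{\oo f}$ for any morphism $f\colon\bbM\to\bbN$, which reduces to the definitions of $e$ and $\iota$, the elementary equality $\opb{f_\R}\field_{\{t\gg0\}}\simeq\field_{\{t\gg0\}}$, and the compatibility of $\iota$ with inverse image from \eqref{eq:table}.

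For the converse direction of the locality claim, let $\{U_i\}_i$ be an open cover of $\bbM$ with open inclusions $f_i\colon\inb{(U_i)}\to\bbM$, and assume each $\Eopb{f_i}K$ is of sheaf type. Being stable is a local property, so $K\in\BECs\bbM$ and the co-unit $\epsilon_K\colon e_\bbM\,\iota_\bbM\,\sh_\bbM K\to K$ of the above adjunction is defined. By the triangular identity together with the first part of (ii), $\epsilon_L$ is an isomorphism for any $L\in\BECs\bbM$ of sheaf type. Since both $e_\bbM\,\iota_\bbM$ and $\sh_\bbM$ commute with open pullback — the latter by Lemmas~\ref{lem:ishop}(i) and~\ref{lem:alphaop}(i), using that open embeddings are borderly submersive — the co-unit is natural under $\Eopb{f_i}$, identifying $\Eopb{f_i}\epsilon_K$ with $\epsilon_{\Eopb{f_i}K}$, which is an isomorphism by hypothesis. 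Since being an isomorphism in $\BEC\bbM$ can be checked on an open cover, $\epsilon_K$ itself is an isomorphism, and $K$ is of sheaf type. The main technical obstacle I expect is the careful bookkeeping of compatibilities between $e$, $\iota$, $\sh$, and the external operations in the bordered framework, especially the naturality of $\epsilon$ under $\Eopb{f_i}$.
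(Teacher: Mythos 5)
Your computation in (i) is correct and is essentially the paper's own argument (the paper applies Proposition~\ref{pro:KScor} directly where you invoke its consequence Corollary~\ref{cor:shother}), and the reduction of the second assertion of (ii) to (i) is fine. The genuine gap is in the locality part of (ii): it rests on an adjunction $(e_\bbM\,\iota_\bbM,\sh_\bbM)$ that does not exist. You cannot obtain it by ``composing $(e_\bbM,\Ish_\bbM)$ with $(\alpha_\bbM,\iota_\bbM)$'': in the pair $(\alpha_\bbM,\iota_\bbM)$ it is $\alpha_\bbM$ that is the \emph{left} adjoint, so the two adjunctions point in incompatible directions and do not compose to $e_\bbM\,\iota_\bbM\dashv\alpha_\bbM\,\Ish_\bbM$. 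The paper states explicitly, right before Proposition~\ref{pro:sheid}, that $(e_\bbM\,\iota_\bbM,\sh_\bbM)$ is \emph{not} an adjoint pair in general; this non-adjointness is precisely the point of the analogy with regularization. Consequently the co-unit $\epsilon_K\colon e_\bbM\,\iota_\bbM\,\sh_\bbM K\to K$, on which your whole gluing argument (triangular identities, naturality under $\Eopb{f_i}$, checking an isomorphism on a cover) is built, is simply not available: there is no natural morphism between $e_\bbM\,\iota_\bbM\,\sh_\bbM K$ and $K$ in either direction in general.

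What does exist, and what repairs your argument along the same lines, is the natural roof
\[
K \from e_\bbM\,\Ish_\bbM K \to e_\bbM\,\iota_\bbM\,\sh_\bbM K,
\]
where the left arrow is the co-unit of $(e_\bbM,\Ish_\bbM)$ (Lemma~\ref{lem:ishadj}) and the right arrow is $e_\bbM$ applied to the unit $\Ish_\bbM K\to\iota_\bbM\,\alpha_\bbM\,\Ish_\bbM K$ of $(\alpha_\bbM,\iota_\bbM)$. Both arrows are isomorphisms when $K$ is of sheaf type, since $\Ish_\bbM\,e_\bbM\simeq\id$ ($e_\bbM$ is a fully faithful left adjoint) and $\alpha_\bbM\,\iota_\bbM\simeq\id$; both commute with restriction along the open embeddings $f_i\colon\inb{(U_i)}\to\bbM$, by Lemma~\ref{lem:ishop}~(i) and Lemma~\ref{lem:alphaop}~(i) (open embeddings are borderly submersive) together with the compatibility of $e$ and $\iota$ with inverse images. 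Hence if each $\Eopb{f_i}K$ is of sheaf type, both arrows are isomorphisms on each $\inb{(U_i)}$, hence isomorphisms (an object of $\BEC\bbM$ vanishing on every member of an open covering vanishes), and $K\simeq e_\bbM\,\iota_\bbM\bl\sh_\bbM(K)\br$. As written, though, your proof of the locality statement does not go through.
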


\begin{proof}
(i) 
By Proposition~\ref{pro:KScor}, for $L\in\BDC(\field_\obM)$, one has
\begin{align*}
\sh_\bbM\, e_\bbM\,\iota_\bbM (L)
&\simeq \sh_\bbM\bl\field_\bbM^\enh \ctens \quot (\field_{\st{t=0}}\tens \pi^{-1}\iota_\bbM L)\br\\
&\simeq \roim{\overline\pi}
\bl\field_{\st{-\infty< \ct\leq+\infty}}\tens\field_{\st{\ct=0}}\tens
\opb{\overline\pi} L\br \\
&\simeq \reim{\overline\pi}
\bl\field_{\st{\ct=0}}\tens
\opb{\overline\pi} L\br \\
&\simeq \bl\reim{\overline\pi}\field_{\st{\ct=0}}\br\tens L \simeq L.
\end{align*}

\smallskip\noindent(ii) follows from (i).
\end{proof}

By Lemmas~\ref{lem:alphaop} and \ref{lem:ishop}, one gets

\begin{lemma}\label{lem:shop}
Let $f\colon \bbM\to \bbN$ be a morphism of bordered spaces.
\begin{itemize}
\item[(i)] There are natural morphisms of functors
\[
\opb {\smash{\oo f}} \, \sh_{\bbN} \to \sh_{\bbM} \, \Eopb f, \quad
 \sh_{\bbM} \, \Eepb f \to \epb {\smash{\oo f}} \, \sh_{\bbN}. ,
\]
which are isomorphisms if $f$ is  borderly submersive.
\item[(ii)] There are natural morphisms of functors
\[
\reim{\oo f} \, \sh_{\bbM} \to \sh_{\bbN} \, \Eeeim f, \quad
\sh_{\bbN} \, \Eoim f \to \roim{\oo f} \, \sh_{\bbM} .
\]
 The first morphism is an isomorphism  if $f$ is proper. 
The second morphism is an isomorphism if $f$ is self-cartesian,
 and in particular if $f$ is proper.
\item[(iii)] For $K\in\BEC\bbM$ and $L\in\BEC\bbN$,
there is a natural morphism
\[
\sh(K)\etens\sh(L)\to\sh(K\cetens L).
\]
\end{itemize}
\end{lemma}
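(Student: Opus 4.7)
The plan is to deduce Lemma~\ref{lem:shop} directly from the factorization $\sh_\bbM = \alpha_\bbM \circ \Ish_\bbM$, by composing the analogous morphisms from Lemma~\ref{lem:alphaop} (for $\alpha$) and Lemma~\ref{lem:ishop} (for $\Ish$). In each case, one of the two constituent morphisms is always an isomorphism, and the other becomes an isomorphism under the hypothesis stated.

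For (i-a), the composition is
\[
\opb{\oo f}\,\sh_\bbN
= \opb{\oo f}\,\alpha_\bbN\,\Ish_\bbN
\isoto \alpha_\bbM\,\opb f\,\Ish_\bbN
\to \alpha_\bbM\,\Ish_\bbM\,\Eopb f
= \sh_\bbM\,\Eopb f ,
\]
where the first arrow is the isomorphism of Lemma~\ref{lem:alphaop}(i-a) (always an iso) and the second is Lemma~\ref{lem:ishop}(i-a), an iso when $f$ is borderly submersive. Symmetrically for (i-b) one writes
\[
\sh_\bbM\,\Eepb f
= \alpha_\bbM\,\Ish_\bbM\,\Eepb f
\isoto \alpha_\bbM\,\epb f\,\Ish_\bbN
\to \epb{\oo f}\,\alpha_\bbN\,\Ish_\bbN
= \epb{\oo f}\,\sh_\bbN ,
\]
where now the first arrow is the isomorphism from Lemma~\ref{lem:ishop}(i-b) and the second is Lemma~\ref{lem:alphaop}(i-b), an iso when $f$ is borderly submersive.

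For (ii-a), one composes
\[
\reim{\oo f}\,\sh_\bbM
= \reim{\oo f}\,\alpha_\bbM\,\Ish_\bbM
\to \alpha_\bbN\,\reeim f\,\Ish_\bbM
\to \alpha_\bbN\,\Ish_\bbN\,\Eeeim f
= \sh_\bbN\,\Eeeim f ,
\]
the first arrow being Lemma~\ref{lem:alphaop}(ii-a) (iso when $f$ is self-cartesian) and the second being Lemma~\ref{lem:ishop}(ii-a) (iso when $f$ is proper); since a proper morphism is both semiproper and self-cartesian, both arrows are isomorphisms in that case. For (ii-b) the composition
\[
\sh_\bbN\,\Eoim f
= \alpha_\bbN\,\Ish_\bbN\,\Eoim f
\isoto \alpha_\bbN\,\roim f\,\Ish_\bbM
\to \roim{\oo f}\,\alpha_\bbM\,\Ish_\bbM
= \roim{\oo f}\,\sh_\bbM
\]
uses Lemma~\ref{lem:ishop}(ii-b) (always an iso) followed by Lemma~\ref{lem:alphaop}(ii-b), which is an iso precisely when $f$ is self-cartesian; properness is a special case.

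Finally, for (iii), combining the exterior-tensor statements one forms
\[
\sh(K)\etens\sh(L)
= \alpha_\bbM\Ish(K)\etens \alpha_\bbN\Ish(L)
\isoto \alpha_{\bbM\times\bbN}\bl\Ish(K)\etens\Ish(L)\br
\to \alpha_{\bbM\times\bbN}\Ish(K\cetens L)
= \sh(K\cetens L),
\]
using Lemma~\ref{lem:alphaop}(iii) then Lemma~\ref{lem:ishop}(iii). There is no real obstacle here: the work was already done in the two preceding lemmas and the only point to check is the matching of hypotheses (``borderly submersive'', ``proper'', ``self-cartesian'') under which each of the two constituent morphisms upgrades to an isomorphism.
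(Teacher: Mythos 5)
Your proposal is correct and is exactly the paper's argument: the paper's entire proof is ``By Lemmas~\ref{lem:alphaop} and \ref{lem:ishop}, one gets'', i.e.\ precisely the factorization $\sh=\alpha\comp\Ish$ with the two constituent morphisms composed and the hypotheses (borderly submersive, self-cartesian, proper via \cite[Lemma~3.3.16]{DK16}) matched as you did. The only tiny wording slip is ``iso \emph{precisely} when $f$ is self-cartesian'' in (ii-b), where Lemma~\ref{lem:alphaop} only gives the ``if'' direction, which is all that is needed.
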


\begin{example}
Let $M=\R_x$, $U=\st{x>0}$.
By Corollary~\ref{cor:shother} one has
\begin{align*}
\RE\ex^{1/x}_{U|M} &\simeq \field_{\st{x>0,\ xt<-1}}[1], &
\RE\ex^{-1/x}_{U|M} &\simeq \field_{\st{x\geq0,\ xt<1}}[1], \\
\sh(\Ex^{1/x}_{U|M}) &\simeq \field_{\st{x>0}}, &
\sh(\Ex^{-1/x}_{U|M}) &\simeq \field_{\st{x\geq0}}.
\end{align*}
Note that, denoting by $i\colon\st0\to M$ the embedding, one has
\[
\epb i (\sh (\Ex^{1/x}_{U|M})) \not\simeq \sh (\Eepb i (\Ex^{1/x}_{U|M})), \quad
\opb i (\sh (\Ex^{-1/x}_{U|M})) \not\simeq \sh (\Eopb i (\Ex^{-1/x}_{U|M})).
\]
In fact, on one hand one has $\epb i (\sh (\Ex^{1/x}_{U|M}))\simeq\field[-1]$ and $\Eepb i (\Ex^{1/x}_{U|M})\simeq 0$, and on the other hand one has $\opb i (\sh (\Ex^{-1/x}_{U|M}))\simeq\field$ and $\Eopb i (\Ex^{-1/x}_{U|M})\simeq0$.

Note also that $\sh$ is \emph{not} conservative, since $\sh(\Ex_{U|X}^{\range{2/x}{1/x}})\simeq 0$.
\end{example}

\begin{example}
Let $X\subset \C_z$ be an open neighborhood of the origin, and set $\bdot X=X\setminus\st0$. 
The real oriented blow-up $p\colon \RB[0]X \to X$ with center the origin is
defined by $\RB[0]X \defeq\st{(r,w)\in\R_{\ge0}\times\C\semicolon |w|=1,\ rw\in X}$,
$p(r,w)=r w$. Denote by $S_0X=\st{r=0}$ the exceptional divisor.

Let $f\in\O_X(*0)$ be a meromorphic function with pole order $d>0$
at the origin. 
With the identification $\bdot X\simeq\st{r>0}\subset\RB[0]X$, the set 
$I\seteq S_0X\setminus\ol{\st{z\in\bdot X\;;\;\Re f(z)\ge0}}$
is the disjoint union of $d$ open non-empty intervals.
Here 
$\ol{\st{\cdot}}$ is the closure in $\RB[0]X$.
Then, recalling Notation~\ref{no:ex},
\[
\sh(\Ex_{\bdot X|X}^{\Re f}) \simeq 
\sh (\Eoim p \Ex_{\bdot X|\RB[0]X}^{\Re f\circ p})
\simeq \roim p\sh ( \Ex_{\bdot X|\RB[0]X}^{\Re f\circ p})\simeq
\reim p \field_{I\sqcup\bdot X}.
\]
 Recall  that, for $\field=\C$, the Riemann-Hilbert correspondence of \cite{DK16} associates the meromorphic connection $d-df$ with $\Ex_{\bdot X|X}^{\Re f}$ by the functor $\drE$. 
\end{example}

\subsection{(Weak-)\,constructibility}

An important consequence of Proposition~\ref{pro:KScor} is

\begin{proposition}[{\cite[Theorem 6.6.4]{KS16D}}]
Let $\bbM$ be a subanalytic bordered space.
The functor $\sh_\bbM$ induces functors
\begin{align*}
\sh_\bbM&\colon\BECwRc \bbM \to \BDC_\wRc(\field_\bbM), \\
\sh_\bbM&\colon\BECRc \bbM \to \BDC_\Rc(\field_\bbM). 
\end{align*}
\end{proposition}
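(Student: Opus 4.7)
\ The idea is to reduce $K$ to the explicit form provided by Proposition~\ref{pro:KScor} and then track weak $\R$-constructibility through the resulting formula, transferring the analysis to the full compactification $\cbM\times\cR$ via the projection formula for the relevant open embeddings, where classical preservation results on good spaces apply directly.

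First, by the definition of $\BECwRc\bbM$, on any relatively compact open subanalytic $U\subset\bbM$ we have an isomorphism $\opb{\pi_\bbM}\field_U\tens K\simeq\field^\enh_\bbM\ctens\quot F$ for some $F\in\BDC_\wRc(\field_{\bbM\times\bR})$. The open embedding $\bU\hookrightarrow\bbM$ is borderly submersive, so by Lemma~\ref{lem:shop}(i) the restriction of $\sh_\bbM(K)$ to $U$ is $\sh_\bU(\Eopb{}K)$; since weak $\R$-constructibility on $\bbM$ can be checked on such a cover, we may assume from the outset that $K=\field^\enh_\bbM\ctens\quot F$ globally.

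Proposition~\ref{pro:KScor} then yields
\[
\sh_\bbM(K)\simeq\reim{\overline\pi}\bigl(\field_{\{-\infty<\ct\leq+\infty\}}\tens\roim k F\bigr),
\]
with $k\colon\obM\times\R\hookrightarrow\obM\times\cR$ and $\overline\pi\colon\obM\times\cR\to\obM$ proper (so $\roim{\overline\pi}=\reim{\overline\pi}$). Denote by $k''\colon\obM\times\R\hookrightarrow\cbM\times\cR$ the total inclusion, by $\ell\colon\obM\times\cR\hookrightarrow\cbM\times\cR$ the open embedding (so $k''=\ell\circ k$), and by $\overline\pi'\colon\cbM\times\cR\to\cbM$ the projection. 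Applying $\reim\inc$ and using $\inc\circ\overline\pi=\overline\pi'\circ\ell$ together with the projection formula for $\ell$ (which, via $\opb\ell\roim\ell\simeq\id$, expresses $\reim\ell\roim k F$ as $\field_{\obM\times\cR}\tens\roim{k''}F$), one obtains
\[
\reim\inc\sh_\bbM(K)\simeq\reim{\overline\pi'}\bigl(\field_{\obM\times\{-\infty<\ct\leq+\infty\}}\tens\roim{k''}F\bigr).
\]
By the definition of $\BDC_\wRc(\field_{\bbM\times\bR})$, $\roim{k''}F$ is weakly $\R$-constructible on $\cbM\times\cR$; the sheaf $\field_{\obM\times\{-\infty<\ct\leq+\infty\}}$ is $\R$-constructible on $\cbM\times\cR$ (its support is a locally closed subanalytic subset); and $\overline\pi'$ is proper since $\cR$ is compact. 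Standard preservation of weak $\R$-constructibility by tensor product with an $\R$-constructible sheaf and by proper pushforward on good spaces then forces $\reim\inc\sh_\bbM(K)$ to be weakly $\R$-constructible on $\cbM$, i.e.\ $\sh_\bbM(K)\in\BDC_\wRc(\field_\bbM)$. The $\R$-constructible case is identical.

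The main obstacle is organizational rather than conceptual: one must juggle the four spaces $\obM\times\R$, $\obM\times\cR$, $\cbM\times\R$, $\cbM\times\cR$ and the inclusions among them, so that the projection formula for the open embedding $\ell$ correctly transports the constructibility from the fully compactified picture down to the bordered space $\bbM$. One also has to verify carefully that weak $\R$-constructibility on $\bbM$ genuinely admits the local characterization on a cover by relatively compact open subanalytic subsets used in the initial reduction.
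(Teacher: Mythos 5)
Your proof is correct, and it follows exactly the route the paper intends: the paper gives no argument of its own (the statement is quoted from \cite[Theorem~6.6.4]{KS16D}), but it introduces the proposition as a consequence of Proposition~\ref{pro:KScor}, which is precisely the engine of your computation. The core chain is sound: an open embedding $\bU\to\bbM$ is borderly submersive (take $S=\mathrm{pt}$ in the definition), so Lemma~\ref{lem:shop}~(i) gives the restriction step; Proposition~\ref{pro:KScor}, the projection formula for the open embedding $\obM\times\cR\hookrightarrow\cbM\times\cR$, and the stability of (weak) $\R$-constructibility under tensoring with an $\R$-constructible sheaf and under proper direct image then give the conclusion, and the $\R$-constructible case is indeed parallel. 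The two points you flag do hold, and you should spell them out: (a) weak $\R$-constructibility is a local property on $\cbM$, and for any $x\in\cbM$ one may choose a relatively compact subanalytic open neighborhood $W$ of $x$ in $\cbM$; since the direct image of $G$ along $\obM\to\cbM$ restricted to $W$ coincides with the direct image of $G|_{W\cap\obM}$ along $W\cap\obM\to W$, the condition $G|_U\in\BDC_\wRc(\field_\bU)$ for all relatively compact open subanalytic $U\subset\bbM$ does imply $G\in\BDC_\wRc(\field_\bbM)$; (b) in the reduction you also implicitly need $F|_{U\times\R}\in\BDC_\wRc(\field_{\bU\times\bR})$, which follows by choosing $V\subset\cbM$ open subanalytic with $V\cap\obM=U$ and noting that the direct image of $F|_{U\times\R}$ to $\cbM\times\cR$ is $\rhom(\field_{V\times\cR},\roim{k''}F)$, hence weakly $\R$-constructible (and $\R$-constructible in the constructible case). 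With these two remarks supplied, your argument is complete.
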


\begin{proposition}
Let $\bbM$ be a subanalytic bordered space.
For $K\in\BECRc \bbM$ there is a natural isomorphism
\[
\sh_\bbM( \Edual_\bbM K) \isoto \dual_\obM( \sh_\bbM K).
\]
\end{proposition}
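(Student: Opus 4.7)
The plan is to construct a canonical natural morphism $\sh_\bbM(\Edual_\bbM K) \to \dual_\obM(\sh_\bbM K)$, functorial in $K\in\BECRc\bbM$, and then show it is an isomorphism by a local reduction to building blocks.

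First, I would construct the morphism from the standard evaluation pairing. Set $\omega_\bbM^\enh \defeq \Eepb{a_\bbM}\Efield_\pt$, where $a_\bbM\colon\bbM\to\pt$, so that $\Edual_\bbM K = \cihom(K,\omega_\bbM^\enh)$ and the adjunction between $\cdot\ctens K$ and $\cihom(K,\cdot)$ yields a canonical map $K\ctens\Edual_\bbM K\to\omega_\bbM^\enh$. Composing with the lax morphism $\sh(A)\otimes\sh(B)\to\sh(A\cetens B)$ of Lemma~\ref{lem:shop}(iii), pulled back along the diagonal, and with the identification $\sh_\bbM(\omega_\bbM^\enh)\simeq\omega_\obM$ obtained by combining Lemma~\ref{lem:shop}(i) for $a_\bbM$ with the equality $\Efield_\pt = e_\pt\field_\pt$ and Proposition~\ref{pro:sheid}(i), one obtains a pairing
\[
\sh_\bbM(\Edual_\bbM K)\otimes\sh_\bbM K\to\omega_\obM,
\]
and hence by $\otimes$-$\rhom$ adjunction the desired morphism $\sh_\bbM(\Edual_\bbM K)\to\dual_\obM(\sh_\bbM K)$.

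Second, the assertion being local on $\bbM$ (by Lemma~\ref{lem:shop}(i) applied to open embeddings), I would invoke the local structure theory of $\R$-constructible enhanced ind-sheaves from \cite{DK16}: any $K\in\BECRc\bbM$ is, locally, obtained through a finite iteration of distinguished triangles whose cones are of the form $\Ex_{S|\bbM}^{\range{\varphi_+}{\varphi_-}}$, for $S$ locally closed subanalytic and $\varphi_\pm$ continuous subanalytic. Since $\sh_\bbM\comp\Edual_\bbM$ and $\dual_\obM\comp\sh_\bbM$ are triangulated, and since the comparison morphism constructed above is natural in $K$ and hence commutes with cones, it is enough to check that it is an isomorphism when $K=\Ex_{S|\bbM}^{\range{\varphi_+}{\varphi_-}}$.

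Third, on such building blocks both sides admit a direct description. By Corollary~\ref{cor:shother}, $\sh_\bbM(\Ex_{S|\bbM}^\varphi)$ is the proper direct image of an explicit subanalytically constructible sheaf on $\obM\times\R$; a parallel computation, using the explicit formula for $\cihom$ on such elementary objects together with the behaviour of $\Edual$ under $\quot$, identifies $\Edual_\bbM\Ex_{S|\bbM}^{\range{\varphi_+}{\varphi_-}}$ with another object of the same type (up to a shift and a change of the pair $(\varphi_+,\varphi_-)$). Classical subanalytic Verdier duality on $\obM$ (for which see Appendix~\ref{se:wc}) then matches the two resulting expressions, proving that the comparison map is an isomorphism on generators.

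The main obstacle will be the coherent construction of the comparison morphism and, in particular, the identification $\sh_\bbM(\omega_\bbM^\enh)\simeq\omega_\obM$, which requires that $a_\bbM$ be borderly submersive. This is handled locally: for $\bbM$ subanalytic, one covers it by relatively compact open subanalytic subsets $U$, on each of which $\inb U\to\pt$ is borderly submersive since $\inb U\hookrightarrow\cbM$ realises the needed open embedding into a subanalytic space. Once the comparison morphism is in place, the reduction to building blocks and the final explicit matching are a direct computation.
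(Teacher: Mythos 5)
Your construction of the comparison morphism is essentially the paper's: the evaluation $K\ctens\Edual_\bbM K\to\omega^\enh_\bbM$, the lax compatibility of $\sh$ with $\cetens$ and with $\Eopb\delta$ from Lemma~\ref{lem:shop}, and $\sh(\omega^\enh)\simeq\omega$ give the map, and the question of it being an isomorphism then becomes local. The gap is in your local verification. You reduce to blocks $K\simeq\Ex^{\range{\varphi_+}{\varphi_-}}_{S|\bbM}$ with $S$ an arbitrary locally closed subanalytic subset and claim that $\Edual_\bbM K$ is again an object ``of the same type, up to a shift and a change of the pair $(\varphi_+,\varphi_-)$''. This is not true in general: writing $K\simeq\field^\enh_M\ctens\quot\field_Z$ with $Z=\st{x\in S,\ -\varphi_+(x)\le t<-\varphi_-(x)}$, one has $\Edual_M K\simeq\field^\enh_M\ctens\quot\,\opb a\,\dual_{M\times\bR}\field_Z$ by \cite[Proposition~4.8.3]{DK16}, and $\dual_{M\times\bR}\field_Z$ is a shifted constant sheaf on another band only when $S$ is sufficiently regular (say a cell, with an orientation taken into account). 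For a singular $S$ (e.g.\ the union of two lines in the plane) or a non-orientable stratum, $\dual_{M\times\bR}\field_Z$ is not of the form $\field_{Z'}[d]$, so the ``direct computation'' matching the two sides on your generators is not available as stated --- and this is precisely where the content of the proposition lies. (Also, the local decomposition of an arbitrary object of $\BECRc\bbM$ into such blocks is not in \cite{DK16} in the form you invoke; it would itself require a triangulation argument.)

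The repair shows the block decomposition is superfluous. By the very definition of $\BECRc\bbM$, locally one has $K\simeq\field^\enh_M\ctens\quot F$ with $F\in\BDC_\Rc(\field_{M\times\bR})$, and this is all the structure theory needed: then $\Edual_M K\simeq\field^\enh_M\ctens\quot\,\opb a\,\dual_{M\times\bR}F$, and Proposition~\ref{pro:KScor}, rewritten in the two forms $\sh_M(\field^\enh_M\ctens\quot F)\simeq\roim{\overline\pi}\,\reim{j^+}\,\roim{i^+}F\simeq\roim{\overline\pi}\,\roim{j^-}\,\reim{i^-}F$ for the factorizations $M\times\bR\to[i^\pm]M\times(\R\cup\st{\pm\infty},\cR)\to[j^\pm]M\times\cR$, lets you compute both sides for an arbitrary such $F$: since $\overline\pi$ is proper, Verdier duality interchanges $\reim{j^+}\,\roim{i^+}$ with $\roim{j^-}\,\reim{i^-}$, and $\opb a$ exchanges the two ends of $\cR$, whence $\sh_M(\Edual_M K)\simeq\dual_M(\sh_M K)$. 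This is the paper's argument; your cell-by-cell route, once corrected for the actual shape of the duals of the blocks, would in any case have to reprove this same computation on each block.
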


\begin{proof}
Recall that $\sh_\bbM \simeq \sh_\obM \, \Eopb{i_\bbM}$
and $\Eopb{i_\bbM}\simeq\Eepb{i_\bbM}$.
Since $\Eopb{i_\bbM} \, \Edual_\bbM  \simeq \Edual_\obM \, \Eopb{i_\bbM}$,
we may assume that $\bbM=\obM=M$ is a subanalytic space.

\smallskip\noindent(i)
Let us construct a natural morphism
\[
\sh( \Edual K) \to \dual( \sh K).
\]
By adjunction, it is enough to construct a natural morphism
\[
\sh( \Edual K)\tens\sh(K) \to \omega_M.
\]
Note that we have a morphism
\[
\Edual K\ctens K \to \omega_M^\enh.
\]
Let $\delta\colon M\to M\times M$ be the diagonal embedding, so that
$\Edual K\ctens K\simeq\Eopb\delta(\Edual K\cetens K)$.
There are natural morphisms
\begin{align*}
\sh(\Edual K)\tens\sh(K)
&\simeq \opb\delta\bl\sh(\Edual K)\etens\sh(K)\br \\
&\underset{(*)}\to \opb\delta\bl\sh(\Edual K\cetens K)\br \\
&\underset{(**)}\to \sh\bl\Eopb\delta(\Edual K\cetens K)\br \\
&\to \sh(\omega^\enh_M) \simeq \omega_M,
\end{align*}
where $(*)$ is due to  Lemma~\ref{lem:shop}~(iii), and $(**)$ is due to Lemma~\ref{lem:shop}~(i).

\smallskip\noindent(ii)
By (i), the problem is local on $M$. Hence, we may assume that
$K\simeq\field_M^\enh \ctens \quot F$ for $F\in\BDC_\Rc(\field_{M\times\bR})$.
Considering the morphisms
\[
 k\colon  M\times\bR \to[i^\pm] M\times(\R\union\st{\pm\infty},\overline\R) \to[j^\pm] M\times\overline\R.
\]
 Since
\[
\field_{\st{-\infty<\overline t\leq+\infty}}\tens\roim k F 
\simeq \reim{j^+}\,\roim{i^+} F
\simeq \roim{j^-}\,\reim{i^-} F,
\]
Proposition~\ref{pro:KScor} gives
\begin{align*}
\sh_M(K) 
&\simeq \roim{\overline\pi}\,\reim{j^+}\,\roim{i^+} F \\
&\simeq \roim{\overline\pi}\,\roim{j^-}\,\reim{i^-} F .
\end{align*}
 By \cite[Proposition~4.8.3]{DK16} one has  $\Edual_M(\field_M^\enh \ctens \quot F) \simeq \field_M^\enh \ctens \quot \opb a\dual_{M\times\bR} F$, where $a\colon M\times\bR\to M\times\bR$ is given by $a(x,t)=(x,-t)$.
Then, one has
\begin{align*}
\sh_M(\Edual_M K) 
&\simeq \sh_M(\field_M^\enh \ctens \quot \,\opb a\,\dual_{M\times\bR} F) \\
&\simeq \roim{\overline\pi}\,\reim{j^+}\,\roim{i^+} \,\opb a\,\dual_{M\times\bR} F \\
&\simeq \roim{\overline\pi}\,\reim{j^-}\,\roim{i^-} \,\dual_{M\times\bR} F \\
&\simeq \dual_M (\roim{\overline\pi}\,\roim{j^-}\,\reim{i^-} F) \\
&\simeq \dual_M (\sh_M(K)).
\end{align*}
\end{proof}

\begin{lemma}\label{lem:shetens}
Let $\bbM$ and $\bbN$ be bordered spaces.
Let $F\in\BDC_\Rc(\field_\bbM)$ and $L\in\BEC\bbN$. Then
\[
\sh(\epsilon(F)\cetens L) \simeq F \etens \sh(L).
\]
\end{lemma}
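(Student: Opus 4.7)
The plan is to identify an explicit representative of $\epsilon(F)\cetens L$ on $\bbM\times\bbN\times\bR$ and pull $F$ outside each operation defining $\sh$, exploiting the $\R$-constructibility of $F$. Fix a representative $G\in\BDC(\ifield_{\bbN\times\bR})$ of $L$ and let $p\cl\bbM\times\bbN\times\bR\to\bbM$, $q\cl\bbM\times\bbN\times\bR\to\bbN\times\bR$, and $\pi\cl\bbM\times\bbN\times\bR\to\bbM\times\bbN$ be the projections, together with $p'\cl\bbM\times\bbN\to\bbM$ and $q'\cl\bbM\times\bbN\to\bbN$ fitting into the obvious Cartesian square.

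First I would observe that the convolution map $m\cl(x,t_1,y,t_2)\mapsto(x,y,t_1+t_2)$ is a homeomorphism when restricted to the support $\{t_1=0\}$ of $\field_{\{t_1=0\}}\tens\opb{\pi_\bbM}F\etens G$, so
\[
\epsilon(F)\cetens L \simeq \quot\bl\opb p F \tens \opb q G\br.
\]
A purely one-dimensional version of the same observation gives $\LE\quot\field_{\{t=0\}}\simeq\field_{\{t\ge0\}}\oplus\field_{\{t\le0\}}$. Substituting into the formula $\fihom(\quot A,K)\simeq\roim\pi\rihom(\LE\quot A,F_K)$ for a representative $F_K$ of $K$, one gets
\[
\Ish(\epsilon(F)\cetens L) \simeq \roim\pi\rihom\bl\field_{\{t\ge0\}}\oplus\field_{\{t\le0\}},\,\opb p F\tens\opb q G\br.
\]

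Next, exploiting that $\field_{\{t\ge0\}}\oplus\field_{\{t\le0\}}$ is pulled back from $\bbN\times\bR$ along $q$ while $\opb p F$ is pulled back from $\bbM$, I would extract $F$ step by step. The projection formula for $\rihom$ and the commutation of $\opb q$ with $\rihom$ against an $\R$-constructible first argument give
\[
\rihom\bl\field_{\{t\ge0\}}\oplus\field_{\{t\le0\}},\,\opb p F\tens\opb q G\br \simeq \opb p F\tens\opb q\rihom\bl\field_{\{t\ge0\}}\oplus\field_{\{t\le0\}},\,G\br.
\]
Writing $\opb p F=\opb\pi\opb{p'}F$, the projection formula factors $\opb{p'}F$ out of $\roim\pi$, and smooth base change along the Cartesian square gives $\roim\pi\,\opb q\simeq\opb{q'}\roim{\pi_\bbN}$. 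Recognizing $\Ish(L)\simeq\roim{\pi_\bbN}\rihom(\field_{\{t\ge0\}}\oplus\field_{\{t\le0\}},G)$, everything assembles into
\[
\Ish(\epsilon(F)\cetens L) \simeq \opb{p'}F\tens\opb{q'}\Ish(L) \simeq F\etens\Ish(L).
\]

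Finally, applying $\alpha_{\bbM\times\bbN}$ and using Lemma~\ref{lem:alphaop}\,(iii) together with $\alpha\iota F\simeq F$ concludes
\[
\sh(\epsilon(F)\cetens L) \simeq \alpha(F\etens\Ish(L)) \simeq F\etens\sh(L).
\]
The main technical obstacle is justifying the three projection/base-change steps in the ind-sheaf/bordered-space setting; in each case the $\R$-constructibility of $F$ is precisely what makes the relevant commutation an isomorphism, by allowing local reduction to constant sheaves on subanalytic strata, to which the pullback and projection formulas apply directly.
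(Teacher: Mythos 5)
Your outline coincides with the paper's: you represent $\epsilon(F)\cetens L$ by the ind-sheaf $\opb p F\tens\opb q G$ on $\bbM\times\bbN\times\bR$, compute $\Ish$ as $\roim{\pi_{\bbM\times\bbN}}\rihom(\field_{\st{t\ge0}}\oplus\field_{\st{t\le0}},\cdot)$, pull $F$ out first of the $\rihom$ and then of the $\roim{\pi_{\bbM\times\bbN}}$, and conclude by applying $\alpha_{\bbM\times\bbN}$ together with Lemma~\ref{lem:alphaop}~(iii). The identification of the representative, the computation of $\LE\quot\field_{\st{t=0}}$, the two intermediate isomorphisms and the final step are all correct and are exactly the chain in the paper's proof. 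The genuine gap is in how you justify the two interchanges, which is where the entire content of the lemma sits.

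The tools you invoke are not available in this generality for ind-sheaves: the projection formula holds for $\reeim$ but not for $\roim$ (which commutes neither with tensoring by a pulled-back object nor with the filtrant colimits defining an ind-sheaf), and $\opb q$ does not commute with $\rihom$ in general --- nor is $q$ ``smooth'', its fibre being the arbitrary subanalytic bordered space $\bbM$, so no untwisting in the spirit of \eqref{eq:tsepb} is possible. The closing appeal to ``local reduction to constant sheaves on subanalytic strata'' does not repair this: already for $F=\field_U$ with $U$ open subanalytic, the isomorphism $\roim{\pi_{\bbM\times\bbN}}\bl \opb p\field_U\tens\opb q H\br\simeq\field_U\etens\roim{\pi_\bbN}H$ with $H$ an ind-sheaf is not a direct instance of any standard projection or base-change formula; it is precisely (a special case of) Proposition~\ref{prop:extd}, which the paper proves via the duality isomorphism \eqref{eq:DFK}, $\dual_\bbM F\etens K\simeq\rihom(\opb p F,\epb q K)$, valid because $F$ is $\R$-constructible. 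Likewise, your first interchange is \cite[Corollary~2.3.5]{DK16}, and the base change $\roim{\pi_{\bbM\times\bbN}}\,\opb q\simeq\opb{q'}\,\roim{\pi_\bbN}$ you use is Lemma~\ref{lem:carsq}, which in this paper is itself deduced from Proposition~\ref{prop:extd}; so, as structured, your justification is circular within the present framework. To complete the proof, replace the heuristic projection/base-change steps by these two statements (or prove them through the duality formula), which is exactly what the paper's steps (a) and (b) do.
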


\begin{proof}
For $G\defeq\RE L\in\BDC(\ifield_{\bbN\times\bR})$, one has
\begin{align*}
\sh(\epsilon(F)\cetens L) 
&\simeq \alpha_{\bbM\times\bbN}\,\roim{{\pi_{\bbM\times\bbN}}}\,\rihom(\field_{\st{t\geq 0}}, F\etens G) \\
&\underset{(a)}\simeq \alpha_{\bbM\times\bbN}\,\roim{{\pi_{\bbM\times\bbN}}}
\bl F\etens\rihom(\field_{\st{t\geq 0}}, G)\br \\
&\underset{(b)}\simeq \alpha_{\bbM\times\bbN}\bl F\etens
\roim{{\pi_{\bbN}}}\rihom(\field_{\st{t\geq 0}}, G)\br \\
&\simeq  F\etens\alpha_\bbN\,\roim{{\pi_\bbN}}\rihom(\field_{\st{t\geq 0}}, G) ,
\end{align*}
where $(a)$ follows from \cite[Corollary 2.3.5]{DK16}
and $(b)$ follows from Proposition~\ref{prop:extd} in Appendix.
\end{proof}

\section{Germ formula}\label{se:germ}

As we saw in the previous section, sheafification does not commute with the pull-back by a closed embedding, in general. We provide here a germ formula for the sheafification of such a pull-back, using results from Appendix~\ref{se:wc}.

\subsection{Restriction and germ formula}

Let $\bbM$ be a subanalytic bordered space.
Recall Notation~\ref{no:ex}.

Let $N\subset\bbM$ be a closed subanalytic subset, denote by $i\colon \bN\to \bbM$ the embedding.
To illustrate the difference between $\sh\,\Eopb i$ and $\opb i \, \sh$ note that  on one hand, by \cite[Lemma~2.4.1]{DK18},  for $K\in\BECp \bbM$ and $y_0\in N$ one has\footnote{Recall from \cite[\S2.1]{DK18} that, for any $c,d\in\Z$, small filtrant inductive limits exist in $\derd^{[c,d]}(\field)$, the full subcategory of $\BDC(\field)$ whose objects $V$ satisfy $H^j(V)=0$ for $j<c$ or $j>d$. That is, uniformly bounded small filtrant inductive limits exist in $\BDC(\field)$.}
\begin{align*}
\bl \opb i \sh(K)\br_{y_0} &\simeq
\sh(K)_{y_0} \\ &\simeq
\ilim[U\owns y_0]\FHom(\ex^0_{U|\bbM},K),
\end{align*}
 where $U$ runs over the open neighborhoods of $y_0$ in $\obM$. 
On the other hand,

\begin{proposition}\label{pro:sgerm}
Let $\varphi\colon \bbM\to \bR$ be a morphism of subanalytic bordered spaces, set
$N \defeq \oo\varphi{}^{-1}(0)\subset\bbM$, and denote by $i\colon \bN\to \bbM$ the embedding.
For $y_0\in N$ and $K\in\BECwRc \bbM$ one has
\[
\sh(\Eopb i K)_{y_0} \simeq
\ilim[\substack{U\owns y_0\\\delta,\varepsilon\to0+}]\FHom(\exx^{\range0{-\delta|\oo\varphi(x)|^{-\varepsilon}}}_{U|\bbM},K),
\]
where $U$ runs over the open neighborhoods of $y_0$ in $\obM$.
Here, we set $-\delta|\oo\varphi(x)|^{-\varepsilon}=-\infty$ for $\oo\varphi(x)=0$.

More generally, for $T\subset N$ a compact subset one has
\[
\rsect(T;\sh(\Eopb i K)) \simeq
\ilim[\substack{U\supset T\\\delta,\varepsilon\to0+}]\FHom(\exx^{\range0{-\delta|\oo\varphi(x)|^{-\varepsilon}}}_{U|\bbM},K),
\]
where $U$ runs over the open neighborhoods of $T$ in $\obM$.
\end{proposition}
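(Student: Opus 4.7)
The plan is to reduce $\sh_\bN$ of $\Eopb i K$ to $\sh_\bbM$ of a pushforward (exploiting the properness of $i$), to invoke the standard germ formula on $\bbM$, and then to identify the resulting inductive system with the one on the right-hand side via the closed/open triangle together with a vanishing argument resting on the appendix. I would handle the pointwise statement and the compact-$T$ statement in parallel: since $\rsect(T;-)$ for $T\subset N$ compact is the filtered colimit of sections over open neighborhoods, the same argument with ``$U\owns y_0$'' replaced throughout by ``$U\supset T$'' covers the general case, so I focus on $y_0\in N$.

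The closed embedding $i\colon\bN\to\bbM$ is proper, so Lemma~\ref{lem:shop}(ii) gives $\sh_\bbM(\Eoim i \Eopb i K)\simeq \roim{\oo i}\sh_\bN(\Eopb i K)$; taking stalks at $y_0\in N$ yields $\sh_\bN(\Eopb i K)_{y_0}\simeq\sh_\bbM(\Eoim i \Eopb i K)_{y_0}$. Moreover $\Eoim i \Eopb i K\in\BECp\bbM$, since $\BECwRc\subset\BECp$ and both $\Eopb i$ and $\Eoim i$ preserve the ``$+$''-summand. Hence the germ formula \cite[Lemma~2.4.1]{DK18} recalled just before the statement gives
\[
\sh_\bN(\Eopb i K)_{y_0}\simeq \ilim[U\owns y_0]\FHom(\exx^0_{U|\bbM},\Eoim i \Eopb i K).
\]

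To connect this with the proposed right-hand side, I apply $\FHom(\exx^{\range0{-\delta|\oo\varphi(x)|^{-\varepsilon}}}_{U|\bbM},-)$ to the closed/open triangle
\[
\opb{\pi_\bbM}\field_{\obM\setminus N}\tens K\to K\to\Eoim i \Eopb i K\To[+1]
\]
(coming from $\Eoim i \Eopb i K\simeq\opb{\pi_\bbM}\field_N\tens K$, valid for the closed embedding~$i$) and pass to the filtered colimit over $(\delta,\varepsilon)\to 0+$. The middle term is precisely the right-hand side of the formula. For the third term, the adjunction $\Eopb i \dashv\Eoim i$ combined with the convention $-\delta|\oo\varphi(x)|^{-\varepsilon}=-\infty$ on $N$ yields the $\delta,\varepsilon$-independent identification
\[
\Eopb i\,\exx^{\range0{-\delta|\oo\varphi(x)|^{-\varepsilon}}}_{U|\bbM}\simeq\exx^0_{U\cap N|\bN},
\]
so taking $\ilim[U\owns y_0]$ of the third term recovers $\sh_\bN(\Eopb i K)_{y_0}$ via the computation above.

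The main obstacle is the vanishing of the first term of the colimit triangle, namely
\[
\ilim[\delta,\varepsilon\to 0+]\FHom(\exx^{\range0{-\delta|\oo\varphi(x)|^{-\varepsilon}}}_{U\setminus N|\bbM},K)\simeq 0.
\]
On $U\setminus N$ the support $\{0\leq t<\delta|\oo\varphi(x)|^{-\varepsilon}\}$ collapses pointwise to the empty set as $\delta\to 0+$ (by the strict upper bound), but $|\oo\varphi|^{-\varepsilon}$ blows up as $x$ approaches $N$, so the ``spikes'' accumulating at the boundary must be controlled uniformly in $\varepsilon$. This is precisely the kind of subanalytic computation enabled by Appendix~\ref{se:wc}: I would rewrite the $\FHom$ in terms of $\rsect$ over the locally closed subanalytic subsets defined by the relevant inequalities, and then apply the appendix formula on sections of weakly $\R$-constructible sheaves to see that the double filtered colimit collapses the support to a subanalytic set on which $K$ has no sections, yielding the desired vanishing.
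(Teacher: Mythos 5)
Your reduction steps are fine: $i$ is a proper morphism of bordered spaces, so Lemma~\ref{lem:shop}~(ii) plus the stalk of a pushforward under a closed embedding, the recalled germ formula from \cite[Lemma~2.4.1]{DK18}, and the adjunction $(\Eopb i,\Eoim i)$ together with $\Eopb i\,\exx^{\range0{-\delta|\oo\varphi(x)|^{-\varepsilon}}}_{U|\bbM}\simeq\exx^0_{U\cap N|\bN}$ correctly identify the middle and third terms of your triangle, and the compact-$T$ case does run in parallel. But the argument has a genuine gap exactly where you flag ``the main obstacle''. First, a local slip: applying $\FHom(\exx^{\range0{\cdot}}_{U|\bbM},-)$ to the excision triangle produces as first term $\FHom\bl\exx^{\range0{-\delta|\oo\varphi(x)|^{-\varepsilon}}}_{U|\bbM},\opb{\pi_\bbM}\field_{\obM\setminus N}\tens K\br$, which is \emph{not} isomorphic to $\FHom(\exx^{\range0{\cdot}}_{U\setminus N|\bbM},K)$ in general (there is no adjunction moving $\opb{\pi_\bbM}\field_{\obM\setminus N}$ from the second to the first argument; the analogous statement already fails for ordinary sheaves). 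More seriously, the required vanishing of the colimit of that first term is not a routine application of Theorem~\ref{th:subanalytic ext} or Corollary~\ref{cor:FKU}: given your identifications of the other two terms, this vanishing is \emph{equivalent} to the proposition, so your proposal in effect defers the entire content of the statement to a step that is only gestured at.

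Why the appendix formula alone does not suffice: after writing $K\simeq\field^\enh\ctens\quot F$ locally with $F$ weakly $\R$-constructible on $\obM\times\bR$, the relevant $\FHom$'s become sections of $F$ (twisted by $\field_{\st{t>-c}}$) over the explicit tubes $\st{t+c<\delta|\oo\varphi(x)|^{-\varepsilon}}$, whose ``limit'' as $\delta,\varepsilon\to0+$, $U\to\st{y_0}$ lives partly at $t=+\infty$ and must be handled in $\obM\times\cR$. Corollary~\ref{cor:FKU} identifies sections over the limit set with the colimit over \emph{all} open subanalytic neighborhoods; to conclude anything about the colimit over your specific two-parameter family $(\delta,\varepsilon)$ one must prove that this family is cofinal among such neighborhoods, and that is precisely where the paper invokes {\L}ojasiewicz's inequality (Lemma~\ref{lem:Loj}~(ii), applied on $\obM\times\st{\ct\ge-c}$ with $g(x,\ct)=(\ct+c+1)^{-1}$). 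Your sketch never addresses this cofinality, and ``the double filtered colimit collapses the support'' is exactly the point at issue: pointwise the tubes do not collapse to a set on which the sheaf has no sections (over $x\notin N$ they shrink to $\st{t=-c}$, and only the extra factor $\field_{\st{t>-c}}$ and the passage to $c\to+\infty$, controlled via {\L}ojasiewicz, save the argument). By contrast, the paper computes both sides as colimits of sections of $F$ and proves cofinality of the two inductive systems directly; if you want to keep your triangle-based route, you would still have to reproduce parts (i)--(iii) of that computation to establish the vanishing, so no real work is saved.
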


\begin{proof}
 Since $y_0\in N\subset \obM$, 
we may assume that $\bbM=\obM\eqdef M$ is a subanalytic space.

Since $\rsect(T;\sh(\Eopb i K)) \simeq
\ilim[\substack{U\supset T}]\rsect(U;\sh(\Eopb i K))$,
we may assume that $U$ runs over the open {\em subanalytic} neighborhoods of $T$ in $\obM$.

We will split the proof of the last isomorphism in the statement into three parts.

\smallskip\noindent(i)
Up to shrinking $M$ around $T$, we can assume that there exists $F\in\BDC_\wRc
(\field_{M\times\bR})$ such that $K \simeq \field_M^\enh\ctens \quot F$. 
For $c\in\R$, and $U$ an open relatively compact subanalytic subset of $M$ containing $T$, set
\[
U_{c,\delta,\varepsilon}  \defeq \st{(x,t)\in U\times\R \semicolon \,t+c<\delta|\varphi(x)|^{-\varepsilon}}.
\]
Note that $\LE \exx^{\range{c}{c-\delta|\varphi(x)|^{-\varepsilon}}}_{U|M} \simeq \field_{U_{c,\delta,\varepsilon}}\tens\field_{\st{ t\geq -c}}$.
Then, one has
\begin{align*}
\FHom(\exx^{\range0{-\delta|\varphi(x)|^{-\varepsilon}}}_{U|M},K)
&\simeq 
\ilim[c\to+\infty]\FHom(\quot\field_{\st{t\geq -c}}\ctens\exx^{\range0{-\delta|\varphi(x)|^{-\varepsilon}}}_{U|M},\quot F) \\
&\simeq 
\ilim[c\to+\infty]\Hom(\LE\exx^{\range{c}{c-\delta|\varphi(x)|^{-\varepsilon}}}_{U|M},F) \\
&\simeq 
\ilim[c\to+\infty]\Hom(\field_{U_{c,\delta,\varepsilon}}\tens\field_{\st{t\geq -c}}, F) \\
&\simeq \ilim[c\to+\infty]\Hom(\field_{U_{c,\delta,\varepsilon}},\field_{\st{t> -c}}\tens F) \\
&\simeq 
\ilim[c\to+\infty]\rsect\bl U_{c,\delta,\varepsilon};\;\field_{\st{t> -c}}\tens F)\\
&\simeq 
\ilim[c\to+\infty]\rsect\bl U_{c,\delta,\varepsilon}\cap\st{t\ge-c};\;\field_{\st{t> -c}}\tens F).
\end{align*}

\smallskip\noindent(ii)
Consider the natural maps
$$
\xymatrix{
N\times\bR \ar[r]^{i_\R} \ar[d]^{\pi_N} & M\times \bR \ar[d]^{\pi_M} \\
N\ar[r]^i&M
}$$
and set, for $S=M,N$,
\[
\field_{S\times\st{t>\ast}} \defeq \indlim[c\to+\infty]\field_{S\times\st{t>-c}} \in \BDC(\ifield_{S\times\bR}).
\]
Noticing that $\Eopb i K \simeq \field_N^\enh\ctens\quot\opb{i_\R}F$, by \cite[Proposition 6.6.5]{KS16D} one has
\begin{align*}
\sh(\Eopb i K)
&\simeq \alpha_N\roim{{\pi_N}}\bl\field_{N\times\st{t>\ast}}\tens \opb{i_\R}F\br \\
&\simeq \alpha_N\roim{{\pi_N}}\opb{i_\R}\bl\field_{M\times\st{t>\ast}}\tens F\br. 
\end{align*}
Hence
\begin{align*}
\rsect(T;\sh(\Eopb i K))
&\simeq \ilim[V]\rsect(V;\sh(\Eopb i K)\br \\
&\simeq\ilim[c,V]\rsect(V;
\roim{{\pi_N}}\opb{i_\R}(\field_{M\times\st{t>-c}}\tens F)\br \\
&\simeq\ilim[c,V]\rsect\bl V\times\R;\opb{i_\R}(\field_{M\times\st{t>-c}}\tens F)\br \\
&\simeq\ilim[c,V]\rsect\bl V\times\st{t\ge-c};
\opb{i_\R}(\field_{M\times\st{t>-c}}\tens F)\br \\
&\simeq \ilim[c,V,W]\rsect\bl W;\; \field_{M\times\st{t>-c}}\tens F\br\\
\end{align*}
where $c\to+\infty$, $V$ runs over the system of open relatively compact subanalytic neighborhoods of $T$ in $N$, 
and $W=W_{c,V}$ runs over the system of open subanalytic subsets of 
$M\times\st{\ct\in\cR;\;+\infty\ge \ct\ge-c}$, such that $W\supset V\times\st{t\in\R;\,t\ge -c}$. 
Here, the last isomorphism follows from Corollary~\ref{cor:FKU}.

\smallskip\noindent(iii)
For $c\in \R$ consider the following inductive systems: $I_c$ is the set of tuples $(U,\delta,\varepsilon)$ as in (i); $J_c$ is the set of tuples $(V,W)$ as in (ii). We are left to show the cofinality of the functor $\phi\colon I_c \to J_c$, $(U,\delta,\varepsilon)\mapsto(U\cap N,U_{c,\delta,\varepsilon}\cap\st{t\ge c})$.

Given $(V,W)\in J_c$, we look for $(U,\delta,\varepsilon)\in I_c$ such that $U\cap N\subset V$ and $U_{c,\delta,\varepsilon}\cap\st{t\ge-c}\subset W$.
Let $U$ be a subanalytic relatively compact open neighborhood of $T$ in $M$ such that $\overline U\cap N \subset V$. 
With notations as in Lemma~\ref{lem:Loj}, set
$X=M\times\st{\ct\in \overline\R\mid \ct\ge -c}$, $W=W$, $T=\overline U\times
\st{\ct\in \overline\R\mid \ct\ge -c}$, $f(x,\overline t)=\varphi(x)$ and 
\[
g(x,\overline t)=
(\overline t +c+1)^{-1}.
\]
Note that $g(x,+\infty)=0$.
Since \eqref{eq:Lojhyp} is satisfied,
Lemma~\ref{lem:Loj}~(ii) provides $C>0$ and $n\in\Z_{>0}$ such that
\[
\{(x,\overline t)\in\overline U\times\overline\R\semicolon 
\ct\ge-c,\;C g(x,\overline t)^n>|\varphi(x)|  \} \subset W.
\]
Then
\[
\{(x,t)\in U\times\R\semicolon \;t\ge -c,\;
C (t +c+1)^{-n}>|\varphi(x)|  \} \subset W.
\]
One concludes by noticing that the set on the left hand side contains $U_{c,\delta,\varepsilon}\cap\st{t\ge-c}$ for $\delta=C^{1/n}$ and $\varepsilon=1/n$.
\end{proof}

\section{Specialization and microlocalization}\label{se:numu}

Using results from the previous section, we establish here a germ formula for the natural enhancement of Sato's specialization and microlocalization functors, as introduced in \cite{DK19nu}. 

\subsection{Real oriented blow-up transforms}\label{sse:blow}
Let $M$ be a real analytic manifold and $N\subset M$ a closed submanifold.
Denote by $S_NM$ the sphere normal bundle.
Consider the real oriented blow-up $\RB M$ of $M$ with center $N$, which enters the commutative diagram with cartesian square
\begin{equation}\label{eq:blowMN}
\xymatrix@R=3ex{
S_NM \ar@{^(->}[r]^-{i} \ar[d]_\sigma & \RB M \ar[d]_{p}
& \inb{(M\setminus N)} \ar@{_(->}[dl]^-{j_N} \ar@{_(->}[l]_-{j} \\
N \ar@{^(->}[r]_-{i_N} \ar@{}[ur]|-\square & M.
}
\end{equation}
Recall the blow-up transform of  \cite[\S4.4]{DK19nu}
\[
\Enu^\rb\colon\BEC M \to \BEC{S_NM},\quad
K\mapsto \Eopb i\Eoim j\Eopb j_N K.
\]

A \emph{sectorial neighborhood} of $\theta\in S_NM$ is an open subset $U\subset M\setminus N$ such that $S_N M\cup  j (U)$ is a neighborhood of $\theta$ in $\RB M$. We write $U \dotowns \theta$ to indicate that $U$ is a sectorial neighborhood of $\theta$.
We say that $U\subset M\setminus N$ is a sectorial neighborhood of $Z\subset S_NM$, and we write $U\dotsupset Z$, if $U$ is a sectorial neighborhood of each $\theta\in Z$.

\begin{lemma}\label{lem:psiphigerm}
Let $\varphi\colon M\to \R$ be a subanalytic continuous map such that
$N = \varphi^{-1}(0)$.
Let $K\in\BECwRc M$.
For $\theta_0\in S_NM$, one has
\[
\sh\bl\Enu^\rb_N(K)\br_{\theta_0} \simeq
\ilim[\delta,\varepsilon, U]
\FHom( \exx^{\range0{-\delta|\varphi(x)|^{-\varepsilon}}}_{U|M} , K),
\]
where $\delta,\varepsilon\to 0+$ and $U\dotowns \theta_0$.
More generally, if $Z\subset S_NM$ is a \emph{closed} subset one has
\[
\rsect\bl Z;\sh(\Enu^\rb_N(K))\br \simeq
\ilim[\delta,\varepsilon,U]
\FHom( \exx^{\range0{-\delta|\varphi(x)|^{-\varepsilon}}}_{U|M} , K)
\]
where $\delta,\varepsilon\to 0+$ and $U\dotsupset Z$.
\end{lemma}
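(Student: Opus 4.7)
My approach is to lift the problem to $\RB M$ and invoke Proposition~\ref{pro:sgerm}. Set $\tilde\varphi \defeq \varphi \circ p \colon \RB M \to \R$, a subanalytic continuous function with $\tilde\varphi^{-1}(0) = p^{-1}(N) = S_NM$, and let $K^\sharp \defeq \Eoim j\,\Eopb{j_N} K$, so that $\Enu^\rb_N K = \Eopb i K^\sharp$ by the very definition of $\Enu^\rb_N$. Assuming that $K^\sharp$ lies in $\BECwRc{\RB M}$, Proposition~\ref{pro:sgerm} applied to $\tilde\varphi$ and $K^\sharp$ yields
\[
\sh\bl\Enu^\rb_N K\br_{\theta_0} \simeq
\ilim[\substack{U'\owns \theta_0\\\delta,\varepsilon \to 0+}]
\FHom\bl\exx^{\range{0}{-\delta|\tilde\varphi|^{-\varepsilon}}}_{U'|\RB M},\, K^\sharp\br,
\]
where $U'$ runs over open neighborhoods of $\theta_0$ in $\RB M$.

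To transport this formula from $\RB M$ to $M$, fix $U'$ and set $U \defeq U' \cap (M\setminus N)$. Chaining the adjunctions $(\Eopb j,\,\Eoim j)$ and $(\Eeeim{j_N},\,\Eopb{j_N})$, the $\FHom$ above rewrites as
\[
\FHom\bl\Eeeim{j_N}\,\Eopb j\,\exx^{\range{0}{-\delta|\tilde\varphi|^{-\varepsilon}}}_{U'|\RB M},\, K\br.
\]
The crucial geometric identification is
\[
\Eeeim{j_N}\,\Eopb j\,\exx^{\range{0}{-\delta|\tilde\varphi|^{-\varepsilon}}}_{U'|\RB M}
\simeq \exx^{\range{0}{-\delta|\varphi|^{-\varepsilon}}}_{U|M},
\]
which follows from $\tilde\varphi = \varphi$ on $M\setminus N$, from $\Eopb j$ being restriction to $M\setminus N$, and from $\Eeeim{j_N}$ being extension by zero along the open inclusion $j_N$. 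Finally, the assignment $U'\mapsto U$ is cofinal between the two indexing systems: given $U'\owns\theta_0$ the intersection $U = U'\cap(M\setminus N)$ is sectorial at $\theta_0$ since $S_NM\cup j(U)\supset U'$, and conversely any $U\dotowns\theta_0$ contains $U''\cap(M\setminus N)$ for some open $U''\owns \theta_0$ in $\RB M$ with $U''\subset S_NM\cup j(U)$.

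For a closed subset $Z\subset S_NM$, the same argument works mutatis mutandis, using the ``compact $T$'' version of Proposition~\ref{pro:sgerm} together with the identification $\rsect(Z;-)\simeq\ilim_{W\supset Z}\rsect(W;-)$, where $W$ runs over open subanalytic neighborhoods of $Z$. The main obstacle is the verification that $K^\sharp\in\BECwRc{\RB M}$, which is needed to legitimately invoke Proposition~\ref{pro:sgerm}; this amounts to showing that $\Eoim j$ along the open inclusion $j\colon \inb{(M\setminus N)}\hookrightarrow\RB M$ preserves weak $\R$-constructibility across the boundary $S_NM$, and typically uses compatibility of $\Eoim j$ with subanalytic stratifications of $\RB M$ near the exceptional divisor. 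A secondary technicality is ensuring cofinality of sectorial neighborhoods of $Z$ among open neighborhoods of $Z$ in $\RB M$ when $Z$ is not compact, which is handled by the same intersection argument as at a point.
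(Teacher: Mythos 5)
Your argument is essentially the paper's proof: apply Proposition~\ref{pro:sgerm} on $\RB M$ to $\varphi\circ p$ and $\Eoim j\,\Eopb{j_N}K$, transfer the test objects by the adjunctions for $(\Eeeim{j_N},\Eopb{j_N})$ and $(\Eopb j,\Eoim j)$ to get $\exx^{\range0{-\delta|\varphi(x)|^{-\varepsilon}}}_{U|M}$ on $M$, and conclude by matching neighborhoods of $i(Z)$ in $\RB M$ with sectorial neighborhoods. The one point you flag as a possible obstacle, namely that $\Eoim j\,\Eopb{j_N}K$ lies in $\BECwRc{\RB M}$, is not a real gap: weak $\R$-constructibility of enhanced ind-sheaves is stable under the external operations for morphisms of subanalytic bordered spaces (see \cite{DK19}), a fact the paper uses implicitly.
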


\begin{proof}
Let us prove the last statement.

Note that in $\RB M$ one has $S_NM = (\varphi\circ p)^{-1}(0)$. Hence, by Proposition~\ref{pro:sgerm},
\[
\rsect\bl Z;\sh(\Enu^\rb_N(K))\br
\simeq \ilim[\delta,\varepsilon,\widetilde U]\FHom(\exx^{\range0{-\delta|\varphi(p(\tilde x))|^{-\varepsilon}}}_{\widetilde U|\RB M},\Eoim j\Eopb j_N K),
\]
where $\widetilde U\subset\RB M$ runs over the neighborhoods of $i(Z)$.
Then
\begin{align*}
\rsect\bl Z;\sh(\Enu^\rb_N(K))\br
&\simeq \ilim[\delta,\varepsilon,\widetilde U]\FHom(\Eeeim{{j_N}}\Eopb j\exx^{\range0{-\delta|\varphi(p(\tilde x))|^{-\varepsilon}}}_{\widetilde U|\RB M}, K) \\
&\simeq \ilim[\delta,\varepsilon,\widetilde U]\FHom(\Eeeim{{j_N}}\exx^{\range0{-\delta|\varphi(x)|^{-\varepsilon}}}_{\opb j(\widetilde U)|\inb{(M\setminus N)}}, K) \\
&\simeq \ilim[\delta,\varepsilon,\widetilde U]\FHom(\exx^{\range0{-\delta|\varphi(x)|^{-\varepsilon}}}_{j_N(\opb j(\widetilde U))|M},K).
\end{align*}
One concludes by noticing that $U\dotsupset Z$ if and only if $U=j_N(\opb j(\widetilde U))$ for some neighborhood $\widetilde U$ of $i(Z)$  in $\RB M$.
\end{proof}

\subsection{Sheafification on vector bundles}
Recall from \cite[\S2.2]{DK19nu} that any morphism $p\colon M\to\bbS$, from a good space to a bordered space, admits a unique bordered compactification $p_\infty\colon\bM\to\bbS$ such that $(\bM){}^\circ=M$ and $p_\infty$ is semiproper.

Let $\tau\colon V\to N$ be a vector bundle.
Denote by $\inb V$ its bordered compactification, and by $o\colon N\to V$ the zero section.

The natural action of $\R_{>0}$ on $V$ extends to an action of the bordered group\footnote{a group object in the category of bordered spaces} $\bGmp \seteq (\R_{>0},\overline\R)$ on $\inb V$.
Denote by $\BECcon{\inb V}$ the category of conic enhanced ind-sheaves on $\inb V$.

\begin{lemma}\label{lem:oshsho}
For $K\in\BECcon{\inb V}$, one has
\[
\opb o \sh (K) \simeq \sh(\Eopb o K), \quad
\epb o \sh (K) \simeq \sh(\Eepb o K).
\]
\end{lemma}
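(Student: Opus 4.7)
The plan is to exploit the conicity of $K$ together with the commutation properties from Lemma~\ref{lem:shop}. I argue the first identity; the one for $\epb o$ is handled by a parallel argument (or, under further constructibility, by Verdier duality).

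First I show that $\sh(K)$ is itself a conic sheaf on $V$. Denote the action by $\mu\colon\bGmp\times\inb V\to\inb V$ and by $p_2\colon\bGmp\times\inb V\to\inb V$ the projection. Both morphisms are borderly submersive: $p_2$ is essentially a product morphism, while $\mu=p_2\comp\sigma$ with $\sigma(\lambda,v)\defeq(\lambda,\lambda v)$ an isomorphism of bordered spaces. Applying Lemma~\ref{lem:shop}~(i) to both and invoking the conicity isomorphism $\Eopb\mu K\simeq\Eopb{p_2}K$, I obtain $\opb{\oo\mu}\sh(K)\simeq\opb{\oo p_2}\sh(K)$, so $\sh(K)$ is $\bGmp$-conic on $V$.

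To check $\opb o\sh(K)\isoto\sh(\Eopb o K)$ I compute stalks at $y\in N$. Choose a subanalytic continuous map $\varphi\colon\inb V\to\bR$ with $N=\oo\varphi^{-1}(0)$, positively homogeneous of some degree $d>0$ (say, the square of a fibre norm). Proposition~\ref{pro:sgerm} gives
\[
\sh(\Eopb o K)_y\simeq\ilim[\substack{U\ni y\\\delta,\varepsilon\to 0+}]\FHom\bl\exx^{\range0{-\delta|\oo\varphi|^{-\varepsilon}}}_{U|\inb V},K\br,
\]
while the stalk $\opb o\sh(K)_y=\sh(K)_{o(y)}$ is the inductive limit of $\FHom(\ex^0_{U'|\inb V},K)$ over open neighbourhoods $U'\ni o(y)$ (using Lemma~\ref{lem:shop}~(i) for the open embeddings $U'\hookrightarrow V$, which are borderly submersive). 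Now for each $\lambda\in\Gmp$, the automorphism $\mu_\lambda\defeq\mu(\lambda,\cdot)$ is a borderly submersive isomorphism, and conicity of $K$ gives $\Eopb{\mu_\lambda}K\simeq K$. Homogeneity of $\varphi$ yields
\[
\Eopb{\mu_\lambda}\exx^{\range0{-\delta|\oo\varphi|^{-\varepsilon}}}_{U|\inb V}\simeq\exx^{\range0{-\delta\lambda^{-\varepsilon d}|\oo\varphi|^{-\varepsilon}}}_{\mu_\lambda^{-1}(U)|\inb V}.
\]
Since $o(y)$ is fixed by every $\mu_\lambda$, the joint reparametrisation $(U,\delta)\mapsto(\mu_\lambda^{-1}(U),\delta\lambda^{-\varepsilon d})$ preserves both the condition $U\ni o(y)$ and the value of $\FHom(\cdot,K)$. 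Letting $\delta\to+\infty$ while simultaneously enlarging $U$ to its conic saturation, the germ formula for $\sh(\Eopb o K)_y$ reduces to $\ilim[W]\FHom(\ex^0_{W|\inb V},K)$, where $W$ ranges over $\bGmp$-invariant open neighbourhoods of $o(y)$. The same reparametrisation, applied now to the conic sheaf $\sh(K)$ via Step~1, shows that this same conic inductive limit computes $\sh(K)_{o(y)}$, proving the first identity. The $\epb o$-variant is obtained by a dual argument, replacing the sections germ formula by its $\epb$-counterpart.

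The most delicate point is the cofinality argument in the last paragraph: one must jointly control the neighbourhood $U$, the truncation parameter $\delta$, and the singular factor $|\oo\varphi|^{-\varepsilon}$ near $N$, so that passage to conic saturation together with the limit $\delta\to+\infty$ genuinely recovers $\ex^0$ rather than an intermediate object, and so that $\bGmp$-invariant opens really are cofinal in the germ system computing stalks of conic sheaves at $o(y)$.
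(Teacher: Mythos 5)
There is a genuine gap, on two levels. First, your main tool, Proposition~\ref{pro:sgerm}, is only available for $K\in\BECwRc{\inb V}$, whereas Lemma~\ref{lem:oshsho} is stated (and is needed later) for an arbitrary conic object $K\in\BECcon{\inb V}$, with no constructibility hypothesis; so your argument could at best prove a weaker statement. The same problem recurs at the end: there is no $\epb{}$-counterpart of the germ formula in the paper, and the alternative via Verdier duality would again require $\R$-constructibility, so the second isomorphism is not actually addressed. By contrast, the paper's proof avoids germ formulas altogether: it uses the triangle $\Eeeim j\Eopb j K \to K \to \Eoim o\Eopb o K \to[+1]$, disposes of the second term by properness of $o$, and reduces to showing $\opb o\sh(K)\simeq 0$ when $\Eopb oK\simeq0$, which is done by writing $K$ as a direct image from the real blow-up and using $\reim{\tilde\gamma}\field_{\RB[N]V}\simeq0$ — no constructibility enters.

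Second, even granting weak constructibility, the step you yourself flag as delicate is not a technicality but the entire content of the lemma, and it is asserted rather than proved. The invariance of the index system under $(U,\delta,\varepsilon)\mapsto(\mu_\lambda^{-1}(U),\delta\lambda^{-\varepsilon d},\varepsilon)$ does not by itself let you "let $\delta\to+\infty$" (the limit runs over $\delta\to0+$) nor replace $U$ by its conic saturation: to identify
\[
\ilim[\substack{U\ni y\\ \delta,\varepsilon\to0+}]\FHom\bl\exx^{\range0{-\delta|\oo\varphi|^{-\varepsilon}}}_{U|\inb V},K\br
\quad\text{with}\quad
\ilim[W]\FHom(\ex^0_{W|\inb V},K)
\]
over conic $W$, you would have to produce compatible morphisms between the two inductive systems and show they are mutually inverse on colimits, and this must use conicity of $K$ in an essential quantitative way (for non-conic $K$ the two limits genuinely differ — this is exactly the paper's example with $\Ex^{-1/x}_{U|M}$, where $\opb i\sh\not\simeq\sh\Eopb i$). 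Note also that conic open neighbourhoods of $o(y)$ are not cofinal among all neighbourhoods, so the comparison on the left-hand side likewise needs the conicity of $\sh(K)$ through the standard identity $\opb o\simeq\roim\tau$ on conic sheaves, not a cofinality of index sets. As it stands, the proposal reduces the lemma to an unproven claim of comparable difficulty.
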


\begin{proof}
We shall prove only the first isomorphism since the proof of the second
is similar.

With the identification $N\simeq o(N)\subset V$, set $\bdot V=V\setminus N$.
Consider the commutative diagram, associated with the real oriented blow-up of $V$ with center $N$.
\[
\xymatrix@R=3ex@C=8ex{
S_NV \ar[d] \ar@{^(->}[r] & \inb{(\RB[N]V)} \ar[d]^p \ar[r]^{\tilde\gamma }& S_NV \ar[dr]^\sigma \\
N \ar@{}[ur]|\square \ar@{^(->}[r]^o & \inb V \ar@/_1.0pc/[rr]_\tau & \inb{(\bdot V)} \ar@{_(->}[l]_j \ar[u]^\gamma \ar[r]^{\bdot\tau} \ar[ul]_{\tilde\jmath} & N 
}
\]
Consider the distinguished triangle
\[
\Eeeim j\Eopb j K \to K \to \Eoim o \Eopb o K \to[+1].
\]
One has
\[
\opb o\sh(\Eoim o \Eopb o K) \underset{(*)}\simeq
\opb o\oim o\sh(\Eopb o K) \simeq
\sh(\Eopb o K).
\]
where $(*)$ holds since $o$ is proper.
Hence, we can assume 
\[
K\simeq\Eeeim j\Eopb j K 
\]
and, since $\Eopb o K\simeq 0$, we have to show
\[
\opb o\sh(K)\simeq 0.
\]
 Recall that $\Eopb j K \simeq\Eopb \gamma K^\sph$ for $K^\sph\defeq \Eoim\gamma\Eopb j K$.
Then one  has
\begin{align*}
K
&\simeq\Eeeim j\Eopb \gamma K^\sph \\
&\simeq\Eoim p\Eeeim{\tilde\jmath}\Eopb{\tilde\jmath}\Eopb{\tilde\gamma} K^\sph \\
&\simeq\Eoim p\bl\field_{\RB[N]V\setminus S_NV}\tens\Eopb{\tilde\gamma} K^\sph\br.
\end{align*}
Thus, recalling that $\opb o\sh(K)\simeq\roim\tau\sh(K)$ since $\sh(K)$ is conic,
\begin{align*}
\opb o\sh(K)
&\simeq\roim\tau\sh\bl\Eoim p(\field_{\RB[N]V\setminus S_NV}\tens\Eopb{\tilde\gamma} K^\sph)\br \\
&\underset{(*)}\simeq\roim\tau\roim p\sh\bl\field_{\RB[N]V\setminus S_NV}\tens\Eopb{\tilde\gamma} K^\sph\br \\
&\simeq\roim\sigma\roim{\tilde\gamma}\sh\bl\field_{\RB[N]V\setminus S_NV}\tens\Eopb{\tilde\gamma} K^\sph\br ,
\end{align*}
where $(*)$ holds since $p$ is proper.
It is then enough to show
\[
\roim{\tilde\gamma}\sh(\field_{\RB[N]V\setminus S_NV}\tens\Eopb{\tilde\gamma} K^\sph) \simeq 0.
\]
Since $\tilde\gamma$ is borderly submersive and
$\epb{\tilde\gamma}\field_{S_NV}\simeq \field_{\RB[N]V\setminus S_NV}$,
one has by \eqref{eq:tsepb}
$$\field_{\RB[N]V\setminus S_NV}\tens\Eopb{\tilde\gamma} K^\sph
\simeq\Eepb{\tilde\gamma} K^\sph.$$
Hence one obtain
\eqn
\roim{\tilde\gamma}\sh(\field_{\RB[N]V\setminus S_NV}\tens\Eopb{\tilde\gamma} K^\sph) 
&&\simeq
\roim{\tilde\gamma}\sh(\Eepb{\tilde\gamma} K^\sph)\\ 
&&\underset{(*)}{\simeq}
\roim{\tilde\gamma}\epb{\tilde\gamma} \sh(K^\sph) \\
&&{\simeq}
\roim{\tilde\gamma}\rhom\bl\field_{\RB[N]V},\;\epb{\tilde\gamma} \sh(K^\sph)\br \\
&&{\simeq}
\rhom\bl\reim{\tilde\gamma}\field_{\RB[N]V},\;\sh(K^\sph)\br.
\eneqn
where $(*)$ follows from Lemma~\ref{lem:shop}~(i).
Then the desired result
follows from
$\reim{\tilde\gamma}\field_{\RB[N]V}\simeq0$.
\end{proof}

\subsection{Specialization and microlocalization}
Let us recall from \cite{DK19nu} the natural enhancement of Sato's specialization and microlocalization functors.

\smallskip
Let $M$ be a real analytic manifold and $N\subset M$ a closed submanifold.
Consider the normal and conormal bundles
\[
\xymatrix{T_NM \ar[r]^-\tau & N & \ar[l]_-\varpi T^*_NM,}
\]
and denote by $\inb{(T_NM)}$ and $\inb{(T^*_NM)}$ the bordered compactification of $\tau$ and $\varpi$, respectively.

Denote by $(p,s)\cl \ND{M}\to M \times \R$ the normal deformation of $M$ along $N$ (see \cite[\S4.1]{KS90}). Setting $\Omega\defeq\opb s(\R_{>0})$, one has morphisms
\begin{equation}\label{eq:nd}
\xymatrix@R=3ex{
\inb{(T_NM)} \ar@{^(->}[r]^-{i} & \inb{(\ND M)} & \inb\Omega \ar@{_(->}[l]_-{j} \ar[r]^-{p_\Omega} & M\,,
}
\end{equation}
where $\inb{(\ND M)}$ is the bordered compactification of $p$, and $p_\Omega=p|_\Omega$.
The enhanced Sato's specialization functor is defined by
\[
\Enu_N \colon \BEC M \to \BECcon{\inb{(T_NM)}}, \quad
K\mapsto \Eopb i \Eoim j\Eopb p_\Omega K .
\]

Sato's Fourier transform have natural enhancements (see e.g.\ \cite[\S5.2]{DK19nu})
\begin{align*}
(\cdot)^\wedge &\colon \BECp{\inb{(T_NM)}} \to \BECp{\inb{(T^*_NM)}}, \\
\lap(\cdot) &\colon \BECp{\inb{(T_NM)}} \to \BECp{\inb{(T^*_NM)}},
\end{align*}
and we denote by $(\cdot)^\vee$ and $\lapa(\cdot)$ their respective quasi-inverses.
 Recall that $(\cdot)^\wedge$ and $(\cdot)^\vee$ take values in conic objects, and that $\lap(\cdot)$ and $\lapa(\cdot)$ send conic objects to conic objects. 

Finally, Sato's  microlocalization functor have a natural enhancement
\[
\Emu_N \colon \BECp M \to \BECp{\inb{(T^*_NM)}}\cap \BECcon{\inb{(T^*_NM)}},
\]
defined by $\Emu_N(K) \defeq \lap\Enu_N(K) \simeq \Enu_N(K)^\wedge$.

Consider the natural morphisms
\[
\xymatrix{S_NM & \ar[l]_\gamma \inb{(\bdot T_NM)} \ar[r]^u & \inb{(T_NM)} & N \ar[l]_-{o}},
\]
where $\bdot T_NM$ is the complement of the zero-section, and $o$ is the embedding of the zero-section.
Recall that one has 
\[
\Eopb\gamma \circ \Enu_N^\rb \simeq \Eopb u \circ \Enu_N.
\]

Recall from \cite[\S4.1]{KS90} that the normal cone $C_N(S)\subset T_NM$ to $S\subset M$ along $N$ is defined by $C_N(S) \defeq T_NM\cap\overline{\opb{p_\Omega}(S)}$,
where $\overline{(\cdot)}$ denotes the closure in $\ND M$.

\begin{lemma}\label{lem:numugerm}
Let $\varphi\colon M\to \R$ be a continuous subanalytic function such that
$N = \varphi^{-1}(0)$.
For $v_0\in T_NM$, $\xi_0\in T^*_NM$, and $K\in\BECwRc M$, one has
\begin{align}
\tag{i}
\sh\bl\Enu_N(K)\br_{v_0} &\simeq
\ilim[\delta,\varepsilon,U]
\FHom( \exx^{\range0{-\delta|\varphi(x)|^{-\varepsilon}}}_{U|M} , K), \\
\tag{ii}
\sh\bl\Emu_N(K)\br_{\xi_0} &\simeq
\ilim[\delta,\varepsilon, W, Z]
\FHom(\exx^{\range0{-\delta|\varphi(x)|^{-\varepsilon}}}_{W\cap Z|M} , K),
\end{align}
where $\delta,\varepsilon\to0+$, $U$ runs over the open subsets of $M$ such that $v_0\notin C_N(M\setminus U)$, $W$ runs over the open neighborhoods of $\varpi(\xi_0)$ in $M$, and
$Z$ runs over the closed subsets of $M$ such that 
\[
C_N(Z)_{\varpi(\xi_0)}\subset\{v\in(T_NM)_{\varpi(\xi_0)}\semicolon \langle v,\xi_0\rangle>0\}\cup\{0\}.
\]
\end{lemma}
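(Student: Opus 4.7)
The plan is to deduce both formulae from the germ results of Section~\ref{se:germ}, by reducing $\Enu_N$ (via the blow-up transform $\Enu_N^\rb$ or the zero-section inclusion) and $\Emu_N$ (via the Fourier--Sato transform $\lap$) to pull-backs by closed embeddings.

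For part~(i), I would split according as $v_0 \in \bdot{T}_NM$ or $v_0 = 0_{x_0}$ lies on the zero section. In the first case, set $\theta_0 = \gamma(v_0) \in S_NM$. Combining the identification $\Eopb u \circ \Enu_N \simeq \Eopb\gamma \circ \Enu_N^\rb$ with the fact that $u$ (an open embedding) and $\gamma$ (an $\R_{>0}$-bundle, hence borderly submersive) both verify the hypothesis of Lemma~\ref{lem:shop}~(i) yields
\[
\sh(\Enu_N K)_{v_0} \simeq \sh(\Enu_N^\rb K)_{\theta_0}.
\]
Lemma~\ref{lem:psiphigerm} then closes the argument, once one observes that $U \dotowns \theta_0$ iff $v_0 \notin C_N(M \setminus U)$; this last equivalence is just the definition of the normal cone via the deformation \eqref{eq:nd} rephrased through the blow-up $\RB M$. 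In the second case, Lemma~\ref{lem:oshsho} (applicable as $\Enu_N K$ is conic) together with the identification $\Eopb o \circ \Enu_N \simeq \Eopb{i_N}$ for $i_N\colon N \hookrightarrow M$ reduces the problem to Proposition~\ref{pro:sgerm}; and $0_{x_0} \notin C_N(M\setminus U)$ amounts to $x_0\notin\overline{M\setminus U}$, i.e.\ to $U$ being a neighborhood of $x_0$.

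For part~(ii), I would use $\Emu_N K \simeq \lap \Enu_N K$ together with the enhanced analogue of the classical Kashiwara--Schapira germ formula for microlocalization: for a conic $L \in \BECcon{\inb{(T_NM)}}$,
\[
\sh(\lap L)_{\xi_0} \simeq \ilim[W, A] \rsect_A\bl \opb\tau W;\; \sh L \br,
\]
with $W$ running over open neighborhoods of $\varpi(\xi_0)$ in $N$ and $A$ over closed conic subsets of $T_NM$ satisfying $A_{\varpi(\xi_0)} \subset \{\langle v, \xi_0\rangle \geq 0\}$. This should follow by sheafifying the explicit integral-transform description of $\lap$ from \cite[\S5.2]{DK19nu}. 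Plugging in $L = \Enu_N K$ and applying the sections-over-a-closed-subset refinement of part~(i) (proved exactly as the second statement of Lemma~\ref{lem:psiphigerm}, using the ``compact subset'' half of Proposition~\ref{pro:sgerm}), sections of $\sh(\Enu_N K)$ over $\opb\tau(W)\cap A$ rewrite as the $\FHom$-spaces appearing in the target formula, with $Z\subset M$ chosen so that $C_N(Z)_{\varpi(\xi_0)}\subset \{\langle v,\xi_0\rangle>0\}\cup\{0\}$.

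The main technical obstacle is the cofinality step in part~(ii): matching closed conic subsets $A\subset T_NM$ with the closed half-space condition $A_{\varpi(\xi_0)}\subset\{\langle v,\xi_0\rangle\geq 0\}$ to closed subsets $Z\subset M$ with the \emph{strict} cone condition $C_N(Z)_{\varpi(\xi_0)}\subset\{\langle v,\xi_0\rangle>0\}\cup\{0\}$, while simultaneously absorbing the exponential decay rate $\delta|\varphi|^{-\varepsilon}$ into the geometric condition. This is a \L ojasiewicz-type estimate of the same nature as Lemma~\ref{lem:Loj} (used in the proof of Proposition~\ref{pro:sgerm}), now carried out in the presence of the governing covector $\xi_0$. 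Part~(i), by contrast, becomes essentially bookkeeping once the case split is in place.
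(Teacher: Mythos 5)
Your part (i) is essentially the paper's argument verbatim: the same case split $v_0\in\bdot T_NM$ versus $v_0$ on the zero section, the same use of $\Eopb u\circ\Enu_N\simeq\Eopb\gamma\circ\Enu_N^\rb$ with Lemma~\ref{lem:shop}~(i), of Lemma~\ref{lem:oshsho} together with $\Eopb o\Enu_N\simeq\Eopb{i_N}$, and the same reduction to Lemma~\ref{lem:psiphigerm} and Proposition~\ref{pro:sgerm}. No issue there.

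Part (ii), however, has a genuine gap. Your pivotal step is the ``enhanced Kashiwara--Schapira germ formula'' $\sh(\lap L)_{\xi_0}\simeq\ilim[W,A]\rsect_A(\opb\tau W;\sh L)$, justified only by the remark that it ``should follow by sheafifying the explicit integral-transform description of $\lap$''. But the commutation of $\sh$ with the kernel operations defining $\lap$ (or $(\cdot)^\wedge$) is exactly what cannot be taken for granted in this paper: the transform involves a non-proper direct image, for which Lemma~\ref{lem:shop}~(ii) yields only a morphism, and for exponential-type kernels sheafification destroys the growth data (compare $\sh(\Ex^{1/x}_{U|M})\simeq\field_{\{x>0\}}$); so your intermediate formula is itself a statement of the same difficulty as the lemma, and no proof of it is sketched. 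The paper sidesteps it entirely: it writes $\sh(\Emu_N K)_{\xi_0}\simeq\ilim[V\owns\xi_0]\FHom(\epsilon(\field_V),\Emu_N K)$ over conic neighborhoods $V$ of $\xi_0$ with convex fibers, uses the adjunction between $\lap$ and $\lapa$ together with $\lapa\,\epsilon(\field_V)\simeq\epsilon(\field_{V^\circ})$ to move the transform onto the constructible test object, and then the excision triangle $\field_{\tau^{-1}(W)\setminus V^\circ}\to\field_{\tau^{-1}(W)}\to\field_{V^\circ}$ splits the computation into two pieces handled by results already proved: the term $\field_{\tau^{-1}(W)}$ reduces, via $\Eoim\tau\,\Enu_N$, to Proposition~\ref{pro:sgerm} along $N$, and the term $\field_{\tau^{-1}(W)\setminus V^\circ}$ reduces, via $\reim u\opb\gamma$, to $\rsect\bl Z;\sh(\Enu^\rb_N(K))\br$ for the closed subset $Z=\gamma(\{\xi_0\}^\circ)\subset S_NM$, i.e.\ to Lemma~\ref{lem:psiphigerm}. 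In particular, the Łojasiewicz/cofinality work is confined to Proposition~\ref{pro:sgerm} and needs no new variant ``in the presence of $\xi_0$'' --- which is precisely the step you identify as your main obstacle and leave open. As it stands, your (ii) is a plan resting on two unproven key steps (the sheafified Fourier--Sato germ formula and the $A$-versus-$Z$ cofinality matching) rather than a proof.
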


\begin{proof}
(i-a) Assume that $v_0\in\bdot T_NM$, and set $\theta_0=\gamma(v_0)$. Then, one has
\begin{align*}
\sh\bl\Enu_N(K)\br_{v_0}
&\underset{(*)}\simeq \sh\bl\Eopb u\Enu_N(K)\br_{v_0} \\
&\simeq \sh\bl\Eopb \gamma\Enu^\rb_N(K)\br_{v_0} \\
&\underset{(**)}\simeq \sh\bl\Enu^\rb_N(K)\br_{\theta_0},
\end{align*}
where $(*)$ and $(**)$ follow from Lemma~\ref{lem:shop}~(i).
Then, the statement follows from Lemma~\ref{lem:psiphigerm}~(i), by noticing
that $U\dotowns \theta_0$ if and only if $v_0\notin C_N(M\setminus U)$.

\medskip\noindent (i-b)
Assume that $v_0=o(y_0)$ for $y_0\in N$, where $o\colon N\to T_NM$ is the embedding of the zero section. Then, Lemma~\ref{lem:oshsho} gives
\begin{align*}
\sh\bl\Enu_N(K)\br_{o(y_0)}
&\simeq \bl\opb o \sh(\Enu_N(K))\br_{y_0} \\
&\simeq \bl \sh(\Eopb o\Enu_N(K))\br_{y_0} \\
&\underset{(*)}\simeq \bl \sh(\Eopb {i_N}K)\br_{y_0},
\end{align*}
where $(*)$ follows from \cite[Lemma~4.8~(i)]{DK19nu}.
Then the statement follows from Proposition~\ref{pro:sgerm}.

\medskip\noindent (ii) 
For $F\in\BDC_{\Gmp}(\field_{T^*_NM})$ one has
\begin{align*}
\FHom(\epsilon(F),\Emu_N(K))
&= \FHom(\epsilon(F),\lap\Enu_N(K)) \\
&\simeq \FHom(\lapa \epsilon(F),\Enu_N(K)) \\
&\simeq \FHom(\epsilon(F^\vee),\Enu_N(K)) .
\end{align*}
Hence
\begin{align*}
\sh\bl\Emu_N(K)\br_{\xi_0} 
&\simeq \ilim[V\owns\xi_0] \FHom(\epsilon(\field_V),\Emu_N(K)) \\
&\simeq \ilim[V\owns\xi_0] \FHom(\epsilon(\field_V^\vee),\Enu_N(K)) \\
&\underset{(*)}\simeq \ilim[V\owns\xi_0] \FHom(\epsilon(\field_{V^\circ}),\Enu_N(K)) ,
\end{align*}
where $V$ runs over the conic open neighborhoods of $\xi_0$ in $T^*_NM$,
 and $V^\circ \defeq \{v\in T_NM \semicolon \langle v,\xi \rangle \geq 0, \ \forall
\xi\in V \}$
denotes the polar cone.
Here $(*)$ follows by noticing that
$\xi_0$ has a fundamental system of open conic neighborhoods $V\subset T^*_NM$ such that $\tau|_V$ has convex fibers.

We are left to compute $\ilim[V\owns\xi_0] \FHom(e(F),\Enu_N(K))$ for
$F=\field_{V^\circ}$.
For this, setting $ W=\tau(V)$, and considering the distinguished triangle
\[
\field_{\tau^{-1}( W)\setminus V^\circ} \to
\field_{\tau^{-1}( W)} \to
\field_{V^\circ} \to[+1],
\]
we will instead compute the cases where
$F=\field_{\tau^{-1}( W)}$ or $F=\field_{\tau^{-1}( W)\setminus V^\circ}$.

On one hand, one has
\begin{align*}
\FHom(\epsilon(\field_{\tau^{-1}( W)}),\Enu_N(K))
&\simeq \FHom(\Eopb\tau \epsilon(\field_{ W}),\Enu_N(K)) \\
&\simeq \FHom(\epsilon(\field_{ W}),\Eoim\tau \Enu_N(K)) \\
&\simeq \FHom(\epsilon(\field_{ W}),\Eopb i K).
\end{align*}
Thus, noticing that $W=\tau(V)$ is a system of neighborhoods of $\varpi(\xi_0)$,
\begin{align*}
\ilim[V\owns\xi_0] \FHom(\epsilon(\field_{\tau^{-1}( W)}),\Enu_N(K))
&\simeq \ilim[ W\owns \varpi(\xi_0)] \FHom(\epsilon(\field_ W),\Eopb i K) \\
&\underset{(*)}\simeq \ilim[\delta,\varepsilon,W]
\FHom( \exx^{\range0{-\delta|\varphi(x)|^{-\varepsilon}}}_{ W|M} , K),
\end{align*}
where $(*)$ follows from Proposition~\ref{pro:sgerm}.

On the other hand,
setting $\widetilde V = \gamma(\tau^{-1}( W)\setminus V^\circ)\subset S_NM$, one has
$\field_{\tau^{-1}( W)\setminus V^\circ} \simeq \reim u\opb\gamma\field_{\widetilde V}$.
Hence
\begin{align*}
\FHom(\epsilon(\field_{\tau^{-1}( W)\setminus V^\circ}),\Enu_N(K))
&\simeq \FHom(\Eeeim u\Eopb\gamma \epsilon(\field_{\widetilde V}),\Enu_N(K)) \\
&\simeq \FHom(\epsilon(\field_{\widetilde V}),\Eoim\gamma\Eopb u\Enu_N(K)) \\
&\simeq \FHom(\epsilon(\field_{\widetilde V}),\Enu^\rb_N(K)).
\end{align*}
Note that when $V$ runs over the neighborhoods of $\xi_0$, $\widetilde V$ runs over the neighborhoods of  $Z=\gamma(\{\xi_0\}^\circ)$. Thus
\begin{align*}
\ilim[V\owns\xi_0] \FHom(\epsilon(\field_{\tau^{-1}( W)\setminus V^\circ}),\Enu_N(K))
&\simeq \ilim[V\owns\xi_0] \FHom(\epsilon(\field_{\widetilde V}),\Enu^\rb_N(K)) \\
&\simeq \ilim[V\owns\xi_0] \RHom(\field_{\widetilde V},\sh\bl\Enu^\rb_N(K)\br) \\
&\simeq \rsect\bl Z;\sh(\Enu^\rb_N(K))\br \\
&\underset{(*)}\simeq \ilim[\delta,\varepsilon,U]
\FHom( \exx^{\range0{-\delta|\varphi(x)|^{-\varepsilon}}}_{U|M} , K),
\end{align*}
where $\delta,\varepsilon\to0+$, and $U\dotsupset Z$.
Here, $(*)$ follows from Lemma~\ref{lem:psiphigerm}~(iii).
\end{proof}

\appendix

\section{Complements on enhanced ind-sheaves}\label{se:compl}

We provide here some complementary results on (enhanced ind-)sheaves that we need in this paper. 
\medskip

\Prop
Let $\bbM$ be a subanalytic bordered space, and $\bbN$ a bordered space.
Then, for any $F\in\BDC_{\Rc}(\field_\bbM)$ and $K\in\BDC(\ifield_\bbN)$
we have
\eq \label{eq:DFK}
\dual_\bbM F\etens K\simeq \rihom
(\opb p F,\epb q K).
\eneq
Here, $p\cl \bbM\times\bbN\to\bbM$ and $q\cl\bbM\times\bbN\to\bbN$
are the projections.
\enprop
\begin{proof}
By \cite[Proposition 2.3.4]{DK16}, one has
\[
\dual_\cbM\reim{{j_\bbM}} F\etens \reim{{j_\bbN}} K\simeq \rihom
(\cc p{}^{\,-1}\reim{{j_\bbM}} F,\cc q{}^{\,!}\reim{{j_\bbN}} K),
\]
where $\cc p$ and $\cc q$ are the projections from $\cbM\times\cbN$, and $j_\bbM\colon\bbM\to\cbM$ is the natural morphism.

Applying $\opb j_{\bbM\times\bbN}$, \eqref{eq:DFK} follows.
\end{proof}

\Prop\label{prop:extd}
Let $\bbM$, $\bbN$, $F$, $K$ be as in the preceding proposition.
Let $f\cl\bbN\to\bbS$ be a morphism of bordered spaces,
and let $f'=\id_ \bbM\times f\cl \bbM\times\bbN\to\bbM\times\bbS$.
Then, we have
$$\roim{f'}(F\etens K)\simeq F\etens\roim f K.$$
\enprop
\Proof
Let $p_\bbN\cl\bbM\times\bbN\to\bbM$ and $q_\bbN\cl\bbM\times\bbN\to\bbN$ 
be the projections.
We define similarly $p_\bbS$ and $q_\bbS$.
Then, the preceding proposition implies
\eqn
\roim{f'}\; (F\etens K)
&&\simeq \roim{f'}\;\rihom \bl{\opb p_\bbN}\dual_\bbM F, \epb{q_\bbN} K\br\\
&&\simeq \roim{f'}\;\rihom \bl\opb{f'}{\opb p_\bbS}\dual_\bbM F, \epb{q_\bbN} K\br\\
&&\simeq \rihom \bl{\opb p_\bbS}\dual_\bbM F, \roim{f'}\epb{q_\bbN} K\br\\
&&\simeq \rihom \bl{\opb p_\bbS}\dual_\bbM F, \epb{q_\bbS}\roim{f}K\br\\
&&\simeq F\etens \roim{f}K.
\eneqn
\QED

\begin{lemma}\label{lem:carsq} Let us consider a  commutative  square of  bordered  spaces
$$\xymatrix@C=7ex{ \bbM'\ar[r]^{g'}\ar[d]^-{f'}& \bbM\ar[d]^-f\\
 \bbN'\ar[r]^g& \bbN.}
$$
For any $F\in\BDC(\ifield_\bbM)$, one has a canonical morphism
in $\BDC(\ifield_{ \bbN'})$
$$\opb{g}\roim f F\to\roim{f'}\opb{g'}F.$$
If  the square is cartesian and  $g$ is  borderly  submersive,
then the above morphism is an isomorphism.
\end{lemma}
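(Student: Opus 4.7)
The plan is to first produce the canonical morphism by a standard adjunction argument, and then to prove it is an isomorphism under the hypotheses by a local-on-$\bbN'$ argument that reduces, via the definition of borderly submersive, to two elementary cases for $g$: an open embedding and a projection from a product.

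To construct the morphism, I would start from the unit $F\to\roim{g'}\opb{g'}F$ of the adjunction $(\opb{g'},\roim{g'})$, apply $\roim f$, use the identification $\roim f\,\roim{g'}\simeq\roim g\,\roim{f'}$ coming from the commutativity $f\circ g'=g\circ f'$, and finally transpose through the adjunction $(\opb g,\roim g)$.

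For the isomorphism, I would check the claim locally on $\bbN'$. The borderly submersive hypothesis furnishes an open covering $\{V_i\}_{i\in I}$ of $\bbN'$ together with open embeddings $h_i\colon\inb{(V_i)}\hookrightarrow S_i\times\bbN$ such that $g|_{V_i}=p_i\circ h_i$, where $p_i$ is the second projection. Pasting cartesian squares for $h_i$ and $p_i$ and exploiting the functoriality of the base change morphism constructed in the first step, base change for the composition $p_i\circ h_i$ reduces to base change for each factor; hence it suffices to handle (i) the case where $g$ is an open embedding, and (ii) the case of a projection $S\times\bbN\to\bbN$.

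Case (i) is essentially immediate: when $g$ is an open embedding, cartesianity forces $g'$ to be an open embedding as well, and then $\opb g$ and $\opb{g'}$ are both restrictions, which commute with $\roim f$ tautologically. For case (ii), cartesianity yields $\bbM'=S\times\bbM$, $f'=\id_S\times f$, and $g'\colon S\times\bbM\to\bbM$ the second projection; moreover $\opb{g'}F\simeq\field_S\etens F$ and $\opb g\,\roim f F\simeq\field_S\etens\roim f F$. Then Proposition~\ref{prop:extd} applied to $\id_S\times f\colon S\times\bbM\to S\times\bbN$ yields $\roim{f'}(\field_S\etens F)\simeq\field_S\etens\roim f F$, which is the required base change. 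I expect the main technical point to be verifying that the abstract adjunction morphism from the first step actually coincides, in case (ii), with the isomorphism delivered by Proposition~\ref{prop:extd}; this is a standard but slightly tedious diagram chase with the units and counits of the various adjunctions, and I do not foresee any essential difficulty in it.
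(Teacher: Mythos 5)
Your proposal is correct and follows essentially the same route as the paper: the morphism is obtained by the same adjunction argument, and the isomorphism is reduced to the case $\bbN'=S\times\bbN$, $\bbM'=S\times\bbM$ with $g,g'$ the projections, where it follows from Proposition~\ref{prop:extd} via $\roim{f'}\opb{g'}F\simeq\roim{f'}(\field_S\etens F)\simeq\field_S\etens\roim f F\simeq\opb g\roim f F$. The only difference is that you spell out the locality-on-$\bbN'$ and open-embedding steps that the paper compresses into ``we may assume''.
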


\Proof
The morphism is induced by adjunction from
$$\roim f F\to\roim{f}\roim {g'}\opb{g'}F\isoto\roim{g}\roim {f'}\opb{g'}F.$$

 Assume that the square is cartesian and  $g$ is  borderly  submersive. Then we may assume  that 
$ \bbN'=S\times  \bbN$ and $ \bbM'=S\times  \bbM$ for a subanalytic space $S$,  and that $g$ and $g'$ are the second projections.
Hence the assertion follows from 
$\roim{f'}g^{\prime\,-1}F\simeq\roim{f'}(\cor_S\etens F)\simeq\cor_S\etens \roim f F\simeq\opb{g}\roim f F$,
which is a consequence of Proposition~\ref{prop:extd}.
\QED

\begin{lemma}\label{lem:A3}
 For $f\cl\bbM\to\bbN$ a morphism of bordered spaces and $K\in\BEC\bbN$
there is a natural morphism $\opb{f_\R}(\RE K)\to\RE(\Eopb f K)$.
If $f$ is borderly submersive, then the previous morphism is an isomorphism.
\end{lemma}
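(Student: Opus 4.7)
The plan is to leverage the explicit formula for $\RE$. Choose $F\in\BDC(\ifield_{\bbN\times\bR})$ with $\quot F\simeq K$; then $\RE K\simeq \rihom(\field_{\st{t\geq 0}}\oplus\field_{\st{t\leq 0}},F)$, while $\Eopb f K\simeq \quot\,\opb{f_\R}F$ gives $\RE\,\Eopb f K\simeq \rihom(\field_{\st{t\geq 0}}\oplus\field_{\st{t\leq 0}},\opb{f_\R}F)$. Since $\opb{f_\R}\field_{\st{\pm t\geq 0}}\simeq \field_{\st{\pm t\geq 0}}$, the natural pull-back/internal-Hom transformation $\opb{f_\R}\rihom(G,F)\to\rihom(\opb{f_\R}G,\opb{f_\R}F)$ produces the required morphism $\opb{f_\R}\RE K\to \RE\,\Eopb f K$. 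Equivalently, it may be obtained from the counit $\quot\,\RE\to\id$ under the adjunction $(\quot,\RE)$.

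For the isomorphism when $f$ is borderly submersive, I would first note that $f_\R=f\times\id_\bR$ inherits borderly submersion: any local factorization $\inb{(U_i)}\to S_i\times\bbN$ of $f$ yields one of $f_\R$ through $S_i\times(\bbN\times\bR)\to \bbN\times\bR$. Splitting the direct sum and using $\rihom(\field_{\st{\pm t\geq 0}},F)\simeq \roim{j_\pm}\epb{j_\pm}F$ for the closed embeddings $j_\pm\colon \bbN\times\st{\pm t\geq 0}\hookrightarrow \bbN\times\bR$, I would migrate $\opb{f_\R}$ past $\roim{j_\pm}$ via Lemma~\ref{lem:carsq}, applied to the cartesian square
\[
\xymatrix@C=5em{\bbM\times\st{\pm t\geq 0}\ar[d]_-{j'_\pm}\ar[r]^-{f_\pm}&\bbN\times\st{\pm t\geq 0}\ar[d]^-{j_\pm}\\ \bbM\times\bR\ar[r]_-{f_\R}&\bbN\times\bR,}
\]
where $j'_\pm$ and $f_\pm$ denote the analogous maps on $\bbM$. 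This yields $\opb{f_\R}\roim{j_\pm}\epb{j_\pm}F\simeq \roim{j'_\pm}\opb{f_\pm}\epb{j_\pm}F$.

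The main obstacle is then the remaining base-change $\opb{f_\pm}\epb{j_\pm}F\simeq \epb{j'_\pm}\opb{f_\R}F$ for the upper $\epb$ along a closed embedding in a cartesian square. My plan to handle it is to invoke \eqref{eq:tsepb}, which yields $\epb{f_\R}F\simeq \epb{f_\R}\field\tens\opb{f_\R}F$ for borderly submersive $f_\R$; combined with the standard formula $\epb{f_\R}\rihom(G,F)\simeq \rihom(\opb{f_\R}G,\epb{f_\R}F)$, one obtains an isomorphism that, after cancelling the relative dualizing factor $\epb{f_\R}\field$, collapses to the desired commutativity for $G=\field_{\st{\pm t\geq 0}}$. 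The cancellation is the delicate step; by the local factorizations available for borderly submersive morphisms it reduces to a projection $p\colon S\times Y\to Y$, where the explicit form $\opb pH\simeq \field_S\etens H$ makes the argument transparent via a Kunneth-type computation.
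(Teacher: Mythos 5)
Your construction of the morphism is fine only in its second, ``equivalently'' form: the map $\opb{f_\R}\RE K\to\RE(\Eopb f K)$ obtained by adjunction from $\quot(\opb{f_\R}\RE K)\simeq\Eopb f K$ is exactly how the paper defines it. But the formula you base the rest of the argument on is wrong: for a representative $F$ with $\quot F\simeq K$ one has $\RE\,\quot(F)\simeq\cihom(\field_{\st{t\ge0}}\oplus\field_{\st{t\le0}},F)$, with the \emph{convolution} hom $\cihom$, not the plain $\rihom$. (These differ: $\cihom(\field_{\st{t\ge0}},F)$ is a direct image of sections over half-lines $\st{s\ge t}$, whereas $\rihom(\field_{\st{t\ge0}},F)\simeq\roim{j_+}\epb{j_+}F$ is local cohomology along a closed set.) Consequently your identification $\RE(\Eopb f K)\simeq\rihom(\field_{\st{t\ge0}}\oplus\field_{\st{t\le0}},\opb{f_\R}F)$ and the ensuing reduction to base change for $\roim{j_\pm}\epb{j_\pm}$ do not compute the objects in the statement, and the whole isomorphism argument is built on the wrong functor.

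There is a second, independent gap at the step you yourself flag as delicate. For a borderly submersive $f$ the local model is a projection $S_i\times\bbN\to\bbN$ with $S_i$ an \emph{arbitrary} subanalytic space, so $\epb{f_\R}\field$ is locally the dualizing complex of $S_i$ (externally tensored with a constant), which is not locally constant invertible; ``cancelling the relative dualizing factor'' inside $\rihom$ is therefore not available in this generality, and your proposed Kunneth reduction is precisely where the content lies rather than a transparent finish. The paper avoids all commutation of $\opb{f_\R}$ with internal homs: it observes that $\roim{\pi_\bbM}\opb{f_\R}\RE K\simeq\opb f\roim{\pi_\bbN}\RE K\simeq0$, where the first isomorphism follows from Proposition~\ref{prop:extd} (equivalently Lemma~\ref{lem:carsq}, valid for borderly submersive morphisms with general subanalytic fibers) and the second from $K$ being represented by $\RE K$; then \cite[Proposition~4.4.4~(ii-b)]{DK16} says this vanishing forces the canonical morphism $\opb{f_\R}\RE K\to\RE\,\quot(\opb{f_\R}\RE K)\simeq\RE(\Eopb f K)$ to be an isomorphism. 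If you want to salvage your route, you would have to work with $\cihom$ throughout and justify the base-change steps by the same Proposition~\ref{prop:extd}, at which point you have essentially reconstructed the paper's argument in a more laborious form.
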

\Proof
 The morphism in the statement follows by adjunction from the isomorphism 
$\quot_\bbM(\opb{f_\R}\RE K)\simeq\Eopb f K$.
If $f$ is borderly submersive, we have
\eqn
\roim{{\pi_\bbM}}\opb{f_\R}\RE K
&&\underset{(*)}\simeq\opb{f}\roim{{\pi_\bbN}}\RE K
\simeq0,
\eneqn
where $(*)$ follows from Proposition~\ref{prop:extd}.
Hence, the fact that the morphism in the statement is an isomorphism follows from \cite[Proposition~4.4.4 (ii-b)]{DK16}.
\QED

\section{Complements on weak constructibility}\label{se:wc}

In this appendix we obtain a formula for the sections, on a locally closed subanalytic subset, of a weakly constructible sheaf. This result might be of independent interest. 
\medskip

\subsection{Lojasiewicz's inequalities}

Let $ M$ be a subanalytic space.

\begin{lemma}\label{lem:Loj}
Let  $T\subset  M$ be a compact subanalytic subset, and
let $f,g\colon  M\to \R$ be continuous subanalytic functions.
\begin{itemize}
\item[(i)]
Assume that $T\cap\opb f(0)\subset\opb g(0)$. Then there exist $\varepsilon>0$ and $n\in\Z_{>0}$ such that
\[
\varepsilon|g(x)|^n \leq |f(x)| \quad\text{for }x\in T.
\]
\item[(ii)]
Let $W\subset  M$ be an open subanalytic subset, 
and assume that
\begin{equation}\label{eq:Lojhyp}
\{x\in T\semicolon g(x)>0,\ f(x)=0  \} \subset W.
\end{equation}
Then there exist $\varepsilon>0$ and $n\in\Z_{>0}$ such that
\[
\{x\in T\semicolon g(x)>0,\ \varepsilon g(x)^n>|f(x)|  \} \subset W.
\]
\end{itemize}
\end{lemma}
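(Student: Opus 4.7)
Part~(i) is the classical \L{}ojasiewicz inequality for a pair of continuous subanalytic functions on a compact subanalytic set, and the plan is simply to invoke it (cf.\ standard treatments of subanalytic geometry, e.g.\ Bierstone--Milman). For completeness one can sketch the argument by contradiction: if no such $\varepsilon$ and $n$ existed, the curve selection lemma for subanalytic sets would furnish a continuous subanalytic arc $\gamma\colon[0,\delta)\to T$ with $\gamma(0)\in T\cap f^{-1}(0)$ along which $|f\circ\gamma(s)|/|g\circ\gamma(s)|^n\to 0$ for every $n$, and Puiseux-type fractional expansions of $f\circ\gamma$ and $g\circ\gamma$ would then contradict the hypothesis $T\cap f^{-1}(0)\subset g^{-1}(0)$.

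For part~(ii) the plan is to reduce formally to part~(i). Set $T'\defeq T\setminus W$, which is compact and subanalytic since $W$ is open subanalytic, and set $h\defeq\max(g,0)$, which is continuous and subanalytic. The hypothesis \eqref{eq:Lojhyp} reads $T'\cap\{h>0\}\cap f^{-1}(0)=\emptyset$, equivalently $T'\cap f^{-1}(0)\subset h^{-1}(0)$. Applying part~(i) to the triple $(T',f,h)$ yields $\varepsilon>0$ and $n\in\Z_{>0}$ with $\varepsilon|h(x)|^n\le|f(x)|$ for every $x\in T'$. Now suppose $x\in T$ satisfies $g(x)>0$ and $\varepsilon g(x)^n>|f(x)|$. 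If $x$ belonged to $T'$, then $h(x)=g(x)$ would give $\varepsilon h(x)^n>|f(x)|$, contradicting the \L{}ojasiewicz inequality just obtained. Hence $x\in W$, which is exactly the required inclusion.

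The substantive content lies entirely in part~(i), a well-known classical result; part~(ii) is then a short formal consequence, obtained by passing to the positive part $\max(g,0)$ and restricting to the complement $T\setminus W$, so no significant obstacle is expected.
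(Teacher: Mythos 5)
Your proposal is correct. For part (ii) your reduction coincides in substance with the paper's: the paper applies (i) to $T'=T\cap g^{-1}(\R_{\ge 0})\setminus W$ keeping the same $g$, while you apply it to $T\setminus W$ after replacing $g$ by $h=\max(g,0)$ (which is indeed continuous and subanalytic); the two devices are interchangeable and the final deduction is identical. For part (i) you invoke the classical \L{}ojasiewicz inequality for continuous subanalytic functions on a compact subanalytic set, whereas the paper argues by pushing forward: it considers the image $Z=(f,g)(T)\subset\R^2$, a compact subanalytic set with $Z\cap\{t=0\}\subset\{u=0\}$, and then asserts the planar inequality $Z\subset\{\varepsilon|u|^n\le|t|\}$. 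Both arguments thus rest on a known \L{}ojasiewicz-type fact; the paper's trick reduces it to a statement about compact subanalytic subsets of the plane (readable off a Puiseux expansion of the one-variable subanalytic function $s\mapsto\sup\{|u| : (t,u)\in Z,\ |t|\le s\}$), while you cite the general inequality directly (Bierstone--Milman). One small caveat on your optional sketch of (i): negating the conclusion only gives, for each pair $(\varepsilon,n)$, some point where the inequality fails, and producing a single subanalytic arc along which $|f\circ\gamma|/|g\circ\gamma|^n\to 0$ for \emph{every} $n$ simultaneously requires an additional argument; the standard proof instead studies the one-variable subanalytic function $\psi(s)=\sup\{|g(x)| : x\in T,\ |f(x)|\le s\}$ and applies Puiseux at $s=0$. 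Since the sketch is offered only as a gloss on a citable classical theorem, this does not affect the correctness of your proposal.
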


\begin{proof}
Consider the subanalytic map $(f,g)\colon  M\to\R^2_{(t,u)}$.

\smallskip\noindent(i)
The set $Z=(f,g)(T)$ is a compact subanalytic subset of $\R^2$, and we have
\[
Z\cap\st{(t,u)\semicolon t=0}\subset\st{(t,u)\semicolon u=0}.
\]
Hence, there exist $\varepsilon>0$ and $n\in\Z_{>0}$ such that
\[
Z \subset \st{(t,u)\in\R^2\semicolon \varepsilon|u|^n\leq |t|}.
\]
This gives the statement.

\smallskip\noindent(ii)
Let $T'=T\cap\opb g(\R_{\geq 0})\setminus W$. Since $T'\cap\opb f(0)\subset\opb g(0)$,  (i) gives
\[
T'\subset\st{x\in  M\semicolon \varepsilon|g(x)|^n \leq |f(x)|},
\]
which implies the desired result.
\end{proof}

\begin{theorem}\label{th:subanalytic ext}
Let $M$ be a subanalytic space, and $F\in\BDC_{\wRc}(\field_M)$.
Then, for any locally closed subanalytic subset $Z$ of $M$, and any open subanalytic subset $W$ of $M$ such that $Z\subset W$, there exists
$U\subset W$ open subanalytic in $M$, such that
$Z$ is a closed subset of $U$ and
\[
\rsect(U;F)\isoto \rsect(Z;F).
\]
\end{theorem}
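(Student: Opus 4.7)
First, I would reduce to the case where $Z$ is closed in $W$ and then to $W=M$: since $Z$ is locally closed subanalytic, $\overline Z\setminus Z$ is closed subanalytic in $M$, so replacing $W$ by the open subanalytic set $W\setminus(\overline Z\setminus Z)$ makes $Z$ closed in $W$, and then restricting everything to $W$ reduces the problem to finding an open subanalytic $U\supset Z$ in $M$ (with $Z$ closed in $M$) satisfying $\rsect(U;F)\isoto\rsect(Z;F)$.

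Next, I would invoke the subanalytic triangulation theorem of Lojasiewicz--Hironaka to obtain a locally finite subanalytic simplicial decomposition $\mathcal T$ of $M$ that is adapted both to $Z$ (so $Z$ is a union of open simplices, hence a closed subcomplex) and to $F$ (so $F|_\sigma$ is locally constant on each open simplex $\sigma$; since open simplices are contractible, $F|_\sigma$ is in fact constant in the derived sense). I would then take $U$ to be the open star of $Z$,
\[
U\defeq\Union\set{\sigma\in\mathcal T}{\overline\sigma\cap Z\neq\varnothing}.
\]
Its complement is the closed subanalytic set consisting of those closed simplices disjoint from $Z$, so $U$ is open subanalytic; clearly $Z\subset U$, and $Z$ remains closed in $U$.

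For the isomorphism $\rsect(U;F)\isoto\rsect(Z;F)$ I would construct the standard subanalytic barycentric retraction $r\colon U\to Z$---on an open simplex $\sigma\subset U\setminus Z$, project linearly from the barycenter of the maximal face of $\overline\sigma$ disjoint from $Z$ onto the complementary face $\overline\sigma\cap Z$---and perform a dévissage, peeling off the open simplices of $U\setminus Z$ in order of decreasing dimension. Each peeling step is controlled by a Mayer--Vietoris argument exploiting the constancy of $F$ on the removed open simplex together with the contractibility of its closure onto its faces in $Z$. The base case $U=Z$ is trivial.

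The main technical obstacle lies in this dévissage. Although $F$ is constant on every open simplex, it can genuinely jump across faces, so the compatibility between $F$ on $\sigma$ and on $\overline\sigma\cap Z$ is not automatic: at each peeling step one must verify that the extra stratum contributes trivially to global sections. This reduces to an essentially combinatorial vanishing statement---the cohomology of an open simplex relative to a face subcomplex of its boundary, twisted by the monodromy of $F$ on the simplex---which vanishes precisely because the open simplex is simply connected and $F|_\sigma$ is therefore constant. I expect this bookkeeping of attaching data to be the delicate part of the argument.
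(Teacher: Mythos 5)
Your construction of $U$ is too naive, and this is exactly the point where the paper has to work: taking for $U$ the open star of $Z$ in a triangulation merely adapted to $Z$, $W$ and $F$ does not give $\rsect(U;F)\isoto\rsect(Z;F)$. Concrete counterexample: $M=\R$ with the triangulation having vertices at the integers, $Z=\st{0,1}$ (closed subanalytic, a union of simplices), $F=\field_{(0,1)}$ (weakly constructible, constant on every open simplex). Your $U$ is $(-1,2)\supset[0,1]$, and one computes $H^1\rsect(U;F)\simeq\field$ while $\rsect(Z;F)\simeq0$; no retraction $U\to Z$ can exist, and your retraction recipe breaks down precisely here: for the open simplex $\sigma=(0,1)$ both vertices lie in $Z$, so there is no face of $\overline\sigma$ disjoint from $Z$ to project from, and $\overline\sigma\cap Z$ is not a single face. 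The missing idea is the passage to the barycentric subdivision \emph{before} taking stars. What the paper needs is that the simplices contained in $Z$ be stable under spans, i.e.\ condition \eqref{cond:Sigma}: if $\sigma_1,\sigma_2\subset Z$ span a simplex of the complex, that simplex again lies in $Z$; this holds in the barycentric subdivision for any locally closed constructible $Z$ (Lemma~\ref{sublemma}), but can fail in the original triangulation, as the example shows. After subdivision the union of open stars in the example becomes $(-\tfrac12,\tfrac12)\sqcup(\tfrac12,\tfrac32)$, on which the sections of $F$ do vanish, in accordance with the theorem.

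Two further points. First, even when a retraction $U\to Z$ exists, homotopy invariance is not available for a weakly constructible $F$ unless the retraction is compatible with the stratification; the engine of the paper's proof is instead the star formula $\rsect(U(\sigma);F)\isoto\rsect(|\sigma|;F)$ (\cite[Proposition 8.1.4]{KS90}, Lemma~\ref{lem:Usigma}) combined with a \v{C}ech argument over the cover by open stars, where \eqref{cond:Sigma} guarantees that intersections of stars are again stars of simplices contained in $Z$. Your ``combinatorial vanishing because the open simplex is simply connected'' is exactly the assertion that fails in the example above, so the dévissage you flag as delicate is not merely delicate: as set up, it is false. Second, your preliminary reduction is also off: replacing $W$ by $W\setminus(\overline Z\setminus Z)$ makes $Z$ closed in that open set, not in $M$, and an open subset subanalytic in $W$ need not be subanalytic in $M$, so you cannot simply forget $W$; the clean fix, as in the paper, is to choose one triangulation of $M$ compatible simultaneously with $Z$, $W$ and $F$, which (together with openness of $W$ and Lemma~\ref{lem:Zloccl}) also makes the inclusion $U\subset W$ automatic.
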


The proof is given in \S~\ref{subsec:proof}
after the preparation of the next subsection.

\begin{corollary}\label{cor:FKU}
Let $\bbM$ be a subanalytic bordered space,
$Z$ a locally closed subanalytic subset of $\bbM$, and let $F\in\BDC_{\wRc}(\field_\bbM)$.
Then, there is an isomorphism
\[
\rsect(Z;F) \isofrom \indlim[U]\rsect(U;F),
\]
where $U$ runs over the open subanalytic subsets of $\bbM$ such that $Z\subset U$.
\end{corollary}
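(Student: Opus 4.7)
The plan is to deduce the corollary from Theorem~\ref{th:subanalytic ext} applied to the underlying subanalytic space $\cbM$, via a cofinality argument.

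First, I would pass from the bordered setting to $\cbM$. Set $\widetilde F\defeq\reim{{\inc}}F$, with $\inc\colon\obM\to\cbM$ the inclusion; by the very definition of $\BDC_{\wRc}(\field_\bbM)$ one has $\widetilde F\in\BDC_{\wRc}(\field_\cbM)$. Since $\obM$ is open subanalytic in $\cbM$, the locally closed subanalytic subset $Z\subset\bbM$ is also locally closed subanalytic in $\cbM$, and any open subanalytic subset of $\bbM$ is open subanalytic in $\cbM$. Moreover, for any $A\subset\obM$ one has $\rsect(A;\widetilde F)\simeq\rsect(A;F)$, since $\widetilde F|_{\obM}\simeq F$.

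Next, I would consider the natural restriction morphism
\[
\indlim[U]\rsect(U;F)\to\rsect(Z;F),
\]
with $U$ running over open subanalytic neighborhoods of $Z$ in $\bbM$, and introduce the subsystem $\mathcal{C}$ of those $U$ for which this restriction is already an isomorphism. Given any such $U_0$, it is also open subanalytic in $\cbM$; applying Theorem~\ref{th:subanalytic ext} to $M=\cbM$, $W=U_0$ and the sheaf $\widetilde F$ produces an open subanalytic $U\subset U_0$ in $\cbM$, hence in $\bbM$, in which $Z$ is closed and satisfying $\rsect(U;\widetilde F)\isoto\rsect(Z;\widetilde F)$. Translating back through the identifications above gives $\rsect(U;F)\isoto\rsect(Z;F)$, so $U\in\mathcal{C}$, proving that $\mathcal{C}$ is cofinal in the system.

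Finally, for any $U_1\supset U_2$ in $\mathcal{C}$, the restriction $\rsect(U_1;F)\to\rsect(U_2;F)$ sits in a commutative triangle over $\rsect(Z;F)$ whose two other edges are isomorphisms, hence is itself an isomorphism. Therefore $\indlim[U\in\mathcal{C}]\rsect(U;F)$ is essentially constant, isomorphic to $\rsect(Z;F)$, and by cofinality agrees with $\indlim[U]\rsect(U;F)$. The substance of the corollary is contained in Theorem~\ref{th:subanalytic ext} (where Lojasiewicz's inequality enters through Lemma~\ref{lem:Loj}); by contrast, the present deduction is a routine reduction to $\cbM$ combined with a cofinality argument, the only point worth checking carefully being that derived sections over subsets of $\obM$ agree whether computed for $F$ on $\bbM$ or for $\widetilde F$ on $\cbM$.
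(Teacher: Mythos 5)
Your argument is correct and is exactly the intended deduction: the paper states Corollary~\ref{cor:FKU} without proof as an immediate consequence of Theorem~\ref{th:subanalytic ext}, and your cofinality argument (the $U$'s produced by the theorem form a cofinal subsystem on which the restriction maps to $\rsect(Z;F)$ are isomorphisms) is the expected way to fill it in. The passage to $\cbM$ via $\reim{{\inc}}F$ is the right routine reduction, and your check that derived sections over subsets of $\obM$ agree for $F$ and $\reim{{\inc}}F$ is the only point needing care, as you say.
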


\subsection {Barycentric decomposition}
We will use here the language of simplicial complexes, for which we refer to \cite[\S8.1]{KS90}.

Let $\Sigma=(S,\Delta)$ be a simplicial complex, with $S$ the set of vertices, and $\Delta$ the set of simplexes (i.e., finite subsets of $S$).
Recall that one sets $|\Sigma|\defeq\Union\nolimits_{\sigma\in\Delta}|\sigma|$, where
\[
|\sigma|\defeq\st{x\in\R^S\semicolon \sum_p x(p)=1,\;x(p)=0\text{ for }p\notin\sigma,\; x(p)>0\text{ for }p\in\sigma }.
\] 
Here, $\R^S$ denote the set of maps $S\to\R$ equipped with the product topology.

For a subset $Z$ of $|\Sigma|$,
we set $$\Delta_Z\seteq\set{\sigma\in \Delta}{|\sigma|\subset Z}.$$

A subset $Z$ of $|\Sigma|$ is called $\Sigma$-constructible if
$Z$ is a union of simplexes.

\begin{lemma}\label{lem:Zloccl}
Let $Z$ be a $\Sigma$-constructible subset of $|\Sigma|$.
\begin{itemize}
\item[(i)] the following conditions are equivalent.
\begin{itemize}
\item[(a)] $Z$ is closed,
\item[(b)]
if $\tau, \sigma\in\Delta$ satisfy
$\sigma\in\Delta_Z$ and $\tau\subset \sigma$, then
$\tau\in \Delta_Z$.
\end{itemize}
\item[(ii)] the following conditions are equivalent.
\begin{itemize}
\item[(a)] $Z$ is open
\item[(b)]
if $\tau, \sigma\in\Delta$ satisfy
$\sigma\in\Delta_Z$ and $\sigma\subset \tau$, then
$\tau\in \Delta_Z$.
\end{itemize}
\item[(iii)] the following conditions are equivalent.
\begin{itemize}
\item[(a)] $Z$ is locally closed,
\item[(b)]
if $\sigma_1, \sigma_2,\sigma_3\in\Delta$ satisfy
$\sigma_1,\sigma_3\in\Delta_Z$ and $\sigma_1\subset \sigma_2\subset \sigma_3$, 
then
$\sigma_2\in \Delta_Z$.
\end{itemize}
\end{itemize}
\end{lemma}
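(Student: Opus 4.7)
The plan is to prove (i) directly from the topology of simplicial complexes, deduce (ii) by passing to complements, and then prove (iii) by combining the closure description from (i) with a ``squeezing'' argument on the local structure of simplexes.

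For (i), I would rely on two standard facts about $|\Sigma|$: first, $|\tau|\subset\overline{|\sigma|}$ if and only if $\tau\subset\sigma$ (closure taken in $|\Sigma|$); second, the open star $\mathrm{St}(\tau)\defeq\bigcup_{\sigma\supset\tau}|\sigma|$ is an open neighborhood in $|\Sigma|$ of each point of $|\tau|$. The implication (a)$\Rightarrow$(b) is immediate from the first fact. For (b)$\Rightarrow$(a), given $x\in\overline{Z}$, let $\tau$ be the unique simplex with $x\in|\tau|$; the open star $\mathrm{St}(\tau)$ then meets $Z$, forcing the existence of some $\sigma\in\Delta_Z$ with $\tau\subset\sigma$, so (b) gives $\tau\in\Delta_Z$, hence $x\in Z$.

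Part (ii) will follow directly from (i) applied to $|\Sigma|\setminus Z$: $Z$ is open iff its complement is closed, and the contrapositive of condition (b) of (i) for the complement is precisely condition (b) of (ii).

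For (iii)(a)$\Rightarrow$(b), assume $\sigma_1\subset\sigma_2\subset\sigma_3$ with $\sigma_1,\sigma_3\in\Delta_Z$. Then $|\sigma_2|\subset\overline{|\sigma_3|}\subset\overline{Z}$ and $|\sigma_1|\subset\overline{|\sigma_2|}$, while the partition of $|\Sigma|$ into open simplexes forces $|\sigma_2|\cap Z\in\{\emptyset,|\sigma_2|\}$; the alternative $|\sigma_2|\cap Z=\emptyset$ would exhibit points of $|\sigma_1|\cap Z$ arbitrarily close to points of $|\sigma_2|\subset\overline{Z}\setminus Z$, contradicting that $Z$ is open in $\overline{Z}$. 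For (b)$\Rightarrow$(a), I would set $A\defeq\Delta_{\overline{Z}}\setminus\Delta_Z$ and $Y\defeq\bigcup_{\tau\in A}|\tau|$, and verify that $A$ is closed under taking faces: if $\tau\in A$ and $\tau'\subset\tau$, pick any $\sigma\in\Delta_Z$ with $\tau\subset\sigma$ (which exists because $\tau\in\Delta_{\overline{Z}}$); assuming $\tau'\in\Delta_Z$ then (b) applied to $\tau'\subset\tau\subset\sigma$ gives $\tau\in\Delta_Z$, contradicting $\tau\in A$. Part (i) then shows $Y$ is closed, and $Z=\overline{Z}\cap(|\Sigma|\setminus Y)$ exhibits $Z$ as locally closed.

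The main subtlety is the contradiction argument in (iii)(a)$\Rightarrow$(b), where one must carefully orchestrate both inclusions $\sigma_1\subset\sigma_2$ and $\sigma_2\subset\sigma_3$ in their respective topological roles (accumulation versus being contained in the closure) in order to exploit ``openness in $\overline{Z}$''. Everything else is a routine manipulation of the face/coface combinatorics via the two standard facts stated above.
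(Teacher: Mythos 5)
Your proposal is correct and takes essentially the same route as the paper, whose proof just invokes the closure formula $\overline{|\sigma|}=\bigcup_{\tau\subset\sigma}|\tau|$ for (i) and derives (ii) and (iii) from it: your open-star argument for (i)(b)$\Rightarrow$(a), the passage to complements for (ii), and the decomposition $Z=\overline{Z}\cap(|\Sigma|\setminus Y)$ with $Y$ closed by (i) for (iii) are precisely the details the paper leaves to the reader. The only step worth making explicit is that $\overline{Z}$ is itself $\Sigma$-constructible (if $|\tau|$ meets $\overline{Z}$ then the open star $U(\tau)$ meets $Z$, so $\tau\subset\sigma$ for some $\sigma\in\Delta_Z$, whence $|\tau|\subset\overline{|\sigma|}\subset\overline{Z}$), which you use implicitly when forming $\Delta_{\overline{Z}}$ and asserting $Z=\overline{Z}\setminus Y$.
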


\begin{proof}
(i) follows from $\ol{|\sigma|}=\bigcup_{\tau\in\Delta,\tau\subset\sigma}|\tau|$.
(ii) and (iii) follow from (i).
\end{proof}

For $\sigma\in\Delta$, we set
$$U(\sigma)=\bigcup_{\sigma\subset \tau\in\Delta}|\tau| = \st{x\in|\Sigma|\semicolon x(s)>0\text{ for any } s\in\sigma}.$$
It is the smallest open $\Sigma$-constructible subset containing $|\sigma|$.

Let us denote by $\BDC_{w\text-\Sigma-c}(\field_{|\Sigma|})$ the full subcategory of $\BDC(\field_{|\Sigma|})$ whose objects are weakly $|\Sigma|$-constructible.
By \cite[Proposition 8.1.4]{KS90}, we have

\begin{lemma} \label{lem:Usigma}
Let $F\in\BDC_{w\text-\Sigma-c}(\field_{|\Sigma|})$ and $\sigma\in\Delta$. Then, one has
$$\rsect(U(\sigma);F)\isoto\rsect(|\sigma|;F).$$
\end{lemma}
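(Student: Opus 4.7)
The proof rests on the classical fact that the open star $U(\sigma)$ deformation-retracts onto $|\sigma|$ via a subanalytic retraction. Specifically, the piecewise-linear map
\[
r\colon U(\sigma)\to|\sigma|,\qquad r(x)(s)=\begin{cases}x(s)\big/\sum_{s'\in\sigma}x(s')&\text{if }s\in\sigma,\\ 0&\text{otherwise,}\end{cases}
\]
satisfies $r\circ i=\id_{|\sigma|}$ for $i\colon|\sigma|\hookrightarrow U(\sigma)$ the closed inclusion, and the straight-line homotopy $H(x,t)=(1-t)x+t\,r(x)$ stays inside $U(\sigma)$, preserves each open simplex $|\tau|$ (for $\sigma\subset\tau\in\Delta$) for $t\in[0,1)$, and collapses onto $|\sigma|$ at $t=1$. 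More strongly, $r$ realises a stratification-compatible subanalytic homeomorphism $U(\sigma)\cong|\sigma|\times C^\circ$, where $C^\circ$ is the open cone on the link $|\mathrm{Lk}(\sigma)|$ and $|\sigma|\subset U(\sigma)$ corresponds to $|\sigma|\times\{\text{apex}\}$. Writing $j\colon V\defeq U(\sigma)\setminus|\sigma|\hookrightarrow U(\sigma)$ for the complementary open inclusion and taking $\rsect(U(\sigma);\cdot)$ of the distinguished triangle
\[
j_!j^{-1}F\to F\to i_*i^{-1}F\to[+1],
\]
the second morphism becomes the canonical restriction appearing in the statement, so it suffices to establish the vanishing $\rsect(U(\sigma);j_!j^{-1}F)\simeq 0$.

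For this vanishing I would exploit the product decomposition: under $U(\sigma)\cong|\sigma|\times C^\circ$, the subset $V$ corresponds to $|\sigma|\times(C^\circ\setminus\{\text{apex}\})$, and $C^\circ\setminus\{\text{apex}\}$ is subanalytically homeomorphic to $|\mathrm{Lk}(\sigma)|\times(0,1)$, with the $(0,1)$-coordinate measuring radial distance from the apex. Weak $\Sigma$-constructibility of $F$ allows a Künneth-type separation of $j_!j^{-1}F$ along this $(0,1)$-factor. Since extension by zero from the open interval $(0,1)$ to the half-open ray $[0,1)$ has vanishing global sections on $[0,1)$ for any coefficient sheaf, a Leray argument for the composed projection $U(\sigma)\to C^\circ\to[0,1)$ transfers this vanishing back to $U(\sigma)$.

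The main obstacle is making the Künneth/Leray reduction precise at the level of weakly $\Sigma$-constructible sheaves, since $F$ need not be locally constant on $U(\sigma)$ itself and the product decomposition must lift from spaces to sheaves. I would handle this by induction on the number of simplices of $\Delta$ strictly containing $\sigma$, with trivial base case when $\sigma$ is a maximal simplex of $\Delta$ (so $U(\sigma)=|\sigma|$); at each inductive step, a maximal stratum $|\tau|$ of $V$ is peeled off via a further closed/open decomposition, reducing to a smaller $\Sigma$-constructible situation to which the hypothesis applies, and the cone geometry glues the resulting pieces together.
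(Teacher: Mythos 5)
Your overall strategy---retract the open star onto $|\sigma|$, reduce via the triangle $j_!\opb j F\to F\to \roim i\opb i F\to[+1]$ to the vanishing of $\rsect(U(\sigma);j_!\opb j F)$, and get that vanishing from constancy along the radial direction of the cone---is indeed the route behind this lemma (the paper offers no independent argument: it simply quotes \cite[Proposition~8.1.4]{KS90}, whose proof is the precise form of this retraction argument via the homotopy-invariance results of \cite[\S 2.7]{KS90}). But as written your key vanishing step has a genuine gap. The fact you invoke, that extension by zero from $(0,1)$ to $[0,1)$ has vanishing cohomology ``for any coefficient sheaf'', is false: for the ($\R$-constructible) sheaf $\field_{[1/2,1)}$ one has $\rsect\bl[0,1);\field_{[1/2,1)}\br\simeq\field\neq0$. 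The vanishing holds only for complexes that are constant near the closed end, e.g.\ $\rsect\bl[0,1);V_{(0,1)}\br\simeq0$ for a constant $V$; showing that weak $\Sigma$-constructibility puts you in that situation after your K\"unneth/Leray reduction is exactly the step you yourself flag as ``the main obstacle'' and do not carry out. The closing induction does not fill it: after peeling off a maximal stratum $|\tau|\subset V$, the remaining piece is indeed handled by the inductive hypothesis (it lives on the star inside the smaller complex $\Delta\setminus\st{\tau}$), but the peeled term $\rsect\bl U(\sigma);(j_\tau)_!(F|_{|\tau|})\br$, with $j_\tau\colon|\tau|\into U(\sigma)$, is \emph{not} an instance of that hypothesis and is precisely where the content of the lemma sits; it is left unproved.

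The missing step can be supplied concretely. Since $|\tau|$ is convex, hence simply connected, the cohomology sheaves of $F|_{|\tau|}$ are constant, and by truncation one reduces to a constant sheaf $V_{|\tau|}$. Its extension by zero is supported in $D\defeq\overline{|\tau|}\cap U(\sigma)$, the closed simplex with the faces not containing $\sigma$ removed, so $\rsect\bl U(\sigma);(j_\tau)_!V_{|\tau|}\br\simeq\rsect(D;V_{|\tau|})$. Now $D$ is convex, and $D\setminus|\tau|$ is star-shaped with respect to any $p\in|\sigma|$: if $y\in D$ has $y(s_0)=0$ for some $s_0\in\tau\setminus\sigma$, then every point of the segment from $y$ to $p$ still has vanishing $s_0$-coordinate and strictly positive $\sigma$-coordinates. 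Hence both sets are contractible, the restriction $\rsect(D;V_D)\to\rsect(D\setminus|\tau|;V_{D\setminus|\tau|})$ is an isomorphism (both are $V$), and the triangle $V_{|\tau|}\to V_D\to V_{D\setminus|\tau|}\to[+1]$ gives the required vanishing. Equivalently, you can apply the homotopy-invariance results of \cite[\S 2.7]{KS90} to your straight-line retraction, using that each radial segment stays inside a single open simplex for $t\in[0,1)$, so that $F$ has constant cohomology along it---this is exactly the proof of \cite[Proposition~8.1.4]{KS90}. Without one of these inputs your proposal does not yet establish the lemma.
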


\bigskip
Let $\Bd(\Sigma)=(S_{\Bd(\Sigma)}, \Delta_{\Bd(\Sigma)})$ be the barycentric
decomposition of $\Sigma$ defined as follows:
\eqn
S_{\Bd(\Sigma)}&&=\Delta,\\
\Delta_{\Bd(\Sigma)}&&=\set{\tilde\sigma}
{\text{$\tilde\sigma$ is a finite totally ordered subset of $\Delta$}}.
\eneqn
Here, $\Delta_{\Bd(\Sigma)}$ is ordered by the inclusion relation.
Then there is a homeomorphism
$f\cl|\Bd(\Sigma)|\isoto |\Sigma|$
defined as follows.
For $\sigma\in\Delta=S_{\Bd(\Sigma)}$, let $e_\sigma\in|\Sigma|$ be given by
$$e_\sigma(s)=
\begin{cases}
\dfrac{1}{\sharp\sigma}&\text{if $s\in\sigma$,}\\[1ex]
0&\text{otherwise.}
\end{cases}
$$
Then, we define
\[
f(x)=\sum_{\sigma\in S_{\Bd(\Sigma)}}x(\sigma)e_{\sigma}
\quad\text{for any $x\in |\Bd(\Sigma)|\subset\R^{S_{\Bd(\Sigma)}}$.}
\]
That is, $f(x)\in\R^S$ is given by
\[
\bl f(x)\br(s)=\sum_{\sigma\owns s,\ \sigma\in S_{\Bd(\Sigma)}}\dfrac{x(\sigma)}{\sharp\sigma} 
\quad\text{for any $s\in S$.}
\]
Note that we have
\eq
f(|\tilde\sigma|)\subset |\max(\tilde\sigma)|\quad\text{for any $\tilde\sigma\in \Delta_{\Bd(\Sigma)}$,}\label{eq:bary}
\eneq
where $\max(\tilde\sigma)\in\Delta$ is the largest member of $\tilde\sigma\subset\Delta$.
Conversely, for $y\in|\Sigma|$ one has
$$y\in f(|\tilde\sigma|),$$
where $\tilde\sigma\in\Delta_{\Bd(\Sigma)}$ is given by
$$\tilde\sigma\defeq\set{\sigma\in\Delta}{\text{
$\sigma=\set{s\in S}{y(s)\ge a}$
for some $a\in\R_{>0}$}}.$$

\Lemma\label{sublemma}
Let $Z\subset|\Sigma|$ be a locally closed $\Sigma$-constructible subset.
Then for any
$\tilde\sigma_1,\tilde\sigma_2\in \Delta_{\Bd(\Sigma)}$ such that
$\tilde\sigma_1\cup\tilde\sigma_2\in\Delta_{\Bd(\Sigma)}$ and
 $f(|\tilde\sigma_1|),f(|\tilde\sigma_2|)\subset Z$,
we have
$f(|\tilde\sigma_1\cup\tilde\sigma_2|)\subset Z$.
\enlemma

\Proof
Set $\tilde\tau=\tilde\sigma_1\cup\tilde\sigma_2$.
We have
$|\max(\tilde\sigma_1)|,\,|\max(\tilde\sigma_2)|\subset Z$.
Then the desired result follows from the fact that
$\max(\tilde\tau)$ is equal to either
$\max(\tilde\sigma_1)$ or $\max(\tilde\sigma_2)$.
Hence $|\tilde\tau|\subset|\max(\tilde\tau)|\subset Z$.
\QED

\subsection{Proof of Theorem~\ref{th:subanalytic ext}}\label{subsec:proof}
\Lemma\label{lem:contr}
Let $\Sigma=(S,\Delta)$ be a  simplicial complex.
Let $Z\subset|\Sigma|$ be a $\Sigma$-constructible locally closed subset such that
\eq&&\ba{l}
\text{for any $\sigma_1,\sigma_2\in\Delta_Z$ such that
$\sigma_1\cup \sigma_2\in\Delta$,}\\
\text{one has $\sigma_1\cup\sigma_2\in\Delta_Z$.}\ea
\label{cond:Sigma}
\eneq
Set
$$U\seteq\bigcup_{\sigma\in\Delta_Z}U(\sigma).$$
Then, for $F\in\BDC_{w\text-\Sigma-c}(\field_{|\Sigma|})$ one has
$$\rsect(U;F)\isoto\rsect(Z;F).$$
\enlemma
\Proof
Let us remark that $U$ is an open subset and $Z$ is a closed subset of $U$.
Hence it is enough to how that
$$\rsect(U;F\tens\field_{U\setminus Z})\simeq0.$$
Thus, we reduce the problem to prove that
$\rsect(U;F)\simeq0$ under the condition 
that $F\in\BDC_{w\text-\Sigma-c}(\field_{|\Sigma|})$ satisfies $F\vert_Z\simeq0$.

Let us take the open covering 
$\mathfrak{U}\seteq\{U(\sigma)\}_{\sigma\in\Delta_Z}$ of $U$.
For $\sigma_1,\ldots,\sigma_\ell\in\Delta_Z$,
if $\bigcap_{1\le k\le \ell}U(\sigma_k)\not=\emptyset$, then
$\sigma\seteq\bigcup_{1\le k\le \ell}\sigma_k\in\Delta_Z$ 
by condition \eqref{cond:Sigma} and 
$\bigcap_{1\le k\le \ell}U(\sigma_k)=U(\sigma)$.

Hence, one has  by Lemma~\ref{lem:Usigma}
$$\rsect(\bigcap_{1\le k\le \ell}U(\sigma_k);F)
\isoto\rsect(|\sigma|;F)\simeq0.$$
Thus, we have
$\rsect(\bigcap_{1\le k\le \ell}U(\sigma_k);F)\simeq0$ for any
$\sigma_1,\ldots,\sigma_\ell\in\Delta_Z$.
We conclude that $\rsect(U;F)\simeq\rsect(\mathfrak{U};F)\simeq0$.
\QED

\Proof[{Proof of {\rm Theorem ~\ref{th:subanalytic ext}}}]
There exists a simplicial complex $\Sigma=(S,\Delta)$
and a subanalytic isomorphism $M\simeq|\Sigma|$
such that
$Z$ and $W$ are $\Sigma$-constructible and
$F$ is weakly $\Sigma$-constructible (after identifying $M$ and $|\Sigma|$).
Let $\tilde\Sigma=(\tilde S,\tilde\Delta)$ be the barycentric decomposition of $\Sigma$,
and identify $|\tilde\Sigma|$, $|\Sigma|$ and $M$.
Then $F$ is weakly $\tilde\Sigma$-constructible and
$Z$ and $W$ are $\tilde\Sigma$-constructible.
Set $U=\bigcup_{\tilde\sigma\in\tilde\Delta_Z}U(\tilde\sigma)$.
Then $U\subset W$ by Lemma~\ref{lem:Zloccl}.
Moreover, condition \eqref{cond:Sigma} is satisfied by Lemma~\ref{sublemma}. 
Hence, Lemma~\ref{lem:contr}
implies that
$\rsect(U;F)\to\rsect(Z;F)$ is an isomorphism.
\QED

\end{document}